\newcommand{\R}{\mathbb{R}}
\newcommand{\grad}{\nabla}
\newcommand{\lap}{\Delta}
\theoremstyle{plain}
\newtheorem{thm}{Theorem}[section]
\newtheorem{lemma}[thm]{Lemma}
\newtheorem{defn}[thm]{Definition}
\numberwithin{equation}{section}
\newtheorem{rem}[thm]{Remark}
\begin{document}
\title[3D nematic liquid crystal flow] {Global existence of weak solutions of the nematic liquid crystal flow in dimensions three} 

\author[F. Lin]{Fanghua Lin\ \ \ } 
\address{Courant Institute of Mathematical Sciences\\
New York University\\
NY 10012 and NYU-ECNU Institute of Mathematical Sciences, at NYU Shanghai, 3663, North Zhongshan Rd., Shanghai, PRC 200062}
\email{linf@cims.nyu.edu}
\author[C. Wang]{\ \ \ Changyou Wang} 
\address{Department of
Mathematics, Purdue University, 150 N. University Street, West Lafayette, IN 47907, USA}
\email{wang2482@purdue.edu} 
\date{\today}
\keywords{Hydrodynamic flow, nematic liquid crystal, global weak solution.} 
\subjclass[2000]{}

\begin{abstract}
For any bounded smooth domain $\Omega\subset\mathbb R^3$ (or $\Omega=\mathbb R^3$),
we establish the global existence of a weak solution $(u,d):\Omega\times [0,+\infty)\to\mathbb R^3\times\mathbb S^2$
of the initial-boundary value (or the Cauchy) problem of
the simplified Ericksen-Leslie system (\ref{LLF}) modeling the hydrodynamic flow of nematic liquid crystals
for any initial and boundary (or Cauchy) data $(u_0,d_0)\in {\bf H}\times H^1(\Omega, \mathbb S^2)$,
with $d_0(\Omega)\subset\mathbb S^2_+$ (the upper hemisphere). Furthermore, $(u,d)$ satisfies the global energy inequality (\ref{global_energy_ineq}).
\end{abstract}
\maketitle 

\section{Introduction}
\setcounter{equation}{0}
\setcounter{thm}{0}
In this paper, we consider the following simplified Ericksen-Leslie system modeling the  hydrodynamics of nematic liquid crystals
in dimensions three:
for a bounded smooth domain $\Omega\subset\mathbb R^3$ (or $\Omega=\mathbb R^3$)
and $0<T\le\infty$,  $(u,P,d):\Omega\times[0,T) \to \R^3\times\R\times \mathbb S^2$ solves
\begin{equation}\label{LLF}
\begin{cases}
\begin{aligned}
\partial_t u+ u \cdot \grad u - \nu\lap u + \grad P &= -\lambda\grad\cdot(\grad d \odot \grad d), \ \ {\rm{in}}\ \Omega\times (0,T),\\
\grad \cdot u &= 0, \ \ \ \ \ \ \ \ \ \  \ \ \ \ \ \ \ \ \ \ \ \ \ {\rm{in}}\ \Omega\times (0,T),\\
\partial_t d + u \cdot \grad d &=\gamma(\lap d+ |\grad d|^2 d), \ \ \ \ {\rm{in}}\ \Omega\times (0,T),
\end{aligned}
\end{cases}
\end{equation}
along with the initial and boundary condition:
\begin{equation}\label{IBC}
\begin{cases}
\begin{aligned}
(u, d)&=(u_0, d_0) \ \ {\rm{in}} \ \ \Omega\times \{0\},\\
(u,d)&=(0,d_0)\ \ \ {\rm{on}} \ \ \partial\Omega\times (0,+\infty),
\end{aligned}
\end{cases}
\end{equation}
for a given data $(u_0,d_0):\Omega\to\mathbb R^3\times\mathbb S^2$, with $\nabla\cdot u_0=0$.
Here $u:\Omega\to\mathbb R^3$ represents the velocity field of the fluid, $d:\Omega\to \mathbb S^2$ (the unit sphere
in $\mathbb R^3$) is a unit vector field representing the macroscopic orientation of the nematic liquid crystal molecules,
and $P:\Omega\to\R$ represents the pressure function. The constants $\nu,\lambda,$ and $\gamma$ are positive
constants representing the viscosity of the fluid, the competition between kinetic and potential energy, and the microscopic
elastic relaxation time for the molecular orientation field respectively.
$\nabla\cdot$ denotes the divergence operator in $\mathbb R^3$, and $ \grad d \odot \grad d$ denotes the
symmetric $3\times 3$ matrix: $\displaystyle
\left ( \grad d \odot \grad d \right )_{ij} = \langle\grad_id,  \grad_j d\rangle, \ 1\le i, j \le 3.$
Throughout this paper, we denote $\displaystyle\langle v, w\rangle$ or $v\cdot w$
as the inner product in $\mathbb R^3$ for $v,w\in\mathbb R^3$.

The system (\ref{LLF}) is a simplified version of the celebrated Ericksen-Leslie model for the hydrodynamics of nematic liquid crystals developed by Ericksen
and Leslie during the period of 1958 through 1968 \cite{ericksen, leslie, degenes}. The full Ericksen-Leslie system reduces to the Oseen-Frank
model of liquid crystals in the static case. It is a macroscopic continuum description of the time evolution of the materials
under the influence of fluid velocity field $u$ and the macroscopic description of the microscopic orientation field $d$ of
rod-like liquid crystals. The current form of system (\ref{LLF}) was first proposed by Lin \cite{lin} back in the late 1980's. From the mathematical point
of view, (\ref{LLF}) is a system strongly coupling the non-homogeneous incompressible Navier-Stokes equation and the
transported heat flow of harmonic maps to $\mathbb S^2$. Lin-Liu \cite{LL1, LL2} have initiated the mathematical analysis of (\ref{LLF}) by
considering its Ginzburg-Landau approximation or the so-called orientation with variable degrees in the terminology of Ericksen.
Namely, the Dirichlet energy $E(d)=\displaystyle\frac12\int |\nabla d|^2$ for $d:\mathbb R^3\to \mathbb S^2$ is replaced by  the Ginzburg-Landau
energy $E_\epsilon(d)=\displaystyle\int \frac12|\nabla d|^2+\frac{1}{4\epsilon^2}(1-|d|^2)^2$ ($\epsilon>0$) for $d:\mathbb R^3\to\mathbb R^3$.
Hence (\ref{LLF})$_3$ is replaced by
\begin{equation}\label{GL_LLF}
\partial _t d+u\cdot\nabla d= \gamma(\Delta d+\frac{1}{\epsilon^2} (1-|d|^2)d).
\end{equation}
Lin-Liu have proved in \cite{LL1, LL2} (i) the existence of a unique, global smooth solution in dimension two and in dimension three under large
viscosity $\nu$; and (ii) the existence of suitable weak solutions and their partial regularity in dimension three, analogous to the
celebrated regularity theorem by Caffarelli-Kohn-Nirenberg \cite{CKN} (see also \cite{lin1})
for the three-dimensional incompressible Navier-Stokes equation.

As already pointed out by \cite{LL1, LL2}, it is a very challenging problem to study the issue of convergence of solutions $(u_\epsilon, P_\epsilon, d_\epsilon)$ to (\ref{LLF})$_1$-(\ref{LLF})$_2$-(\ref{GL_LLF}) as $\epsilon$ tends to $0$. In particular, the existence of global Leray-Hopf type  weak solutions to the initial and boundary value problem of (\ref{LLF}) has only been established recently by Lin-Lin-Wang \cite{LLW} in dimension two, see also Hong \cite{hong} and Xu-Zhang \cite{XZ}  for related works.

Because of the super-critical nonlinear term $\nabla\cdot(\nabla d\odot\nabla d)$ in (\ref{LLF})$_1$,
it has been an outstanding open problem whether there exists a global  Leray-Hopf type weak solution to (\ref{LLF}) in $\mathbb R^3$ for any initial data $(u_0, d_0)\in L^2(\Omega,\R^3)\times {H}^{1}(\Omega, S^2)$ with $\nabla\cdot u_0=0$.
We would like to mention that  Wang \cite{wang} has recently obtained the global (or local) well-posedness of
(\ref{LLF}) for initial data $(u_0, d_0)$ belonging to possibly the largest space ${\rm{BMO}}^{-1}\times {\rm{BMO}}$ with $\nabla\cdot u_0=0$, which is an invariant space under parabolic scaling associated with (\ref{LLF}),  with small norms.

In this paper, we are interested in the global existence of weak solutions to (\ref{LLF}) for large initial data.
Since the exact values of $\nu, \lambda, \gamma$ don't play roles in this paper, we
henceforth assume
$$\nu=\lambda=\gamma=1.$$
Before stating our theorems, we need to introduce some notations.  For $b\in [-1,1]$, set
$$\mathbb S^2_{b}=\big\{y=(y^1,y^2,y^3)\in\mathbb S^2: \ y^3\ge b\big\},$$
and let $\mathbb S^2_+=\mathbb S^2_{0}$ denote the upper hemisphere. Set
$${\bf H}={\rm{Closure\ of }}\ C_0^\infty(\Omega,\mathbb R^3)\cap\big\{v: \nabla\cdot v=0\big\} \ {\rm{in}}\ L^2(\Omega,\mathbb R^3),$$
$${\bf J}={\rm{Closure\ of }}\ C_0^\infty(\Omega,\mathbb R^3)\cap\big\{v: \nabla\cdot v=0\big\} \ {\rm{in}}\ H^1_0(\Omega,\mathbb R^3),$$
and
$$H^1(\Omega,\mathbb S^2)=\big\{d\in H^1(\Omega,\mathbb R^3): \ d(x)\in\mathbb S^2 \ {\rm{a.e.}}\ x\in\Omega\big\}.
$$

In this context, we are able to prove

\begin{thm} \label{existence} For any $u_0\in {\bf H}$
and $d_0\in H^1(\Omega,\mathbb S^2)$ with
$d_0(\Omega)\subset\mathbb S^2_+$,  there exists a global weak solution $(u,d):\Omega\times [0,+\infty)
\to\mathbb R^3\times\mathbb S^2$ to the initial and boundary value problem of (\ref{LLF}) and (\ref{IBC}) such that
\begin{itemize}
\item[(i)] $u\in L^\infty_tL^2_x\cap L^2_tH^1_x(\Omega\times [0,+\infty),\mathbb R^3)$.
\item[(ii)] $d\in L^\infty_tH^1_x(\Omega, \mathbb S^2)$ and $d^3(x,t)\ge 0$ a.e.  $(x,t)\in\Omega\times (0,+\infty)$.
\item[(iii)] $(u,d)$ satisfies the global energy inequality: for $L^1$-a.e. $0\le t<+\infty$,
\begin{equation}\label{global_energy_ineq}
\int_\Omega (|u|^2+|\nabla d|^2)(t)+2\int_{0}^{t}\int_\Omega (|\nabla u|^2+|\Delta d+|\nabla d|^2d|^2)
\le \int_\Omega (|u_0|^2+|\nabla d_0|^2).
\end{equation}
\end{itemize}
\end{thm}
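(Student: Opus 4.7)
The plan is to obtain $(u,d)$ as a weak limit of Ginzburg-Landau approximations, exploiting the upper-hemisphere constraint to rule out energy concentration in the director. For each $\epsilon>0$ small, I would solve the penalized system obtained by replacing $(\ref{LLF})_3$ with the Ginzburg-Landau equation (\ref{GL_LLF}), keeping the Ericksen stress with $d_\epsilon$ in place of $d$, subject to the same initial and boundary data. Existence of a Leray-Hopf type weak solution $(u_\epsilon,d_\epsilon)$ is provided by the Lin-Liu framework \cite{LL1,LL2}, and the dissipative energy identity gives uniform bounds for $u_\epsilon$ in $L^\infty_tL^2_x\cap L^2_tH^1_x$, for $d_\epsilon$ in $L^\infty_tH^1_x$, and for the penalty $\epsilon^{-1}(1-|d_\epsilon|^2)$ in $L^\infty_tL^2_x$; the maximum principle applied to $|d_\epsilon|^2$ yields $|d_\epsilon|\le 1$.

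Next I would show preservation of the upper hemisphere, i.e.\ $d_\epsilon^3\ge 0$ throughout. The third component satisfies
$$(\partial_t+u_\epsilon\cdot\nabla-\Delta)d_\epsilon^3=\epsilon^{-2}(1-|d_\epsilon|^2)\,d_\epsilon^3,$$
a linear parabolic equation with divergence-free convection and nonnegative zero-order coefficient (by $|d_\epsilon|\le 1$). Since $d_\epsilon^3=d_0^3\ge 0$ on the parabolic boundary, testing against $-(d_\epsilon^3)_-$ (Stampacchia truncation) gives $d_\epsilon^3\ge 0$ everywhere.

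After extraction, $u_\epsilon\rightharpoonup u$ in $L^2_tH^1_x$, $d_\epsilon\rightharpoonup d$ in $L^\infty_tH^1_x$ weak-$*$ with $|d|=1$ and $d^3\ge 0$ a.e., and Aubin-Lions gives strong $L^2_{\rm loc}$-convergence of $u_\epsilon$ and $d_\epsilon$. The main obstacle---and the heart of the proof---is strong $L^2_{\rm loc}$-convergence of $\nabla d_\epsilon$, needed to pass to the limit in the supercritical Ericksen stress $\nabla\cdot(\nabla d_\epsilon\odot\nabla d_\epsilon)$ and in the tension $|\nabla d_\epsilon|^2 d_\epsilon$. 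Writing the weak-$*$ limit of $|\nabla d_\epsilon|^2\,dxdt$ as $|\nabla d|^2\,dxdt+\nu$ with a defect measure $\nu\ge 0$, I would establish an $\epsilon$-regularity theorem for the coupled system (adapting the two-dimensional analysis of \cite{LLW} together with parabolic blow-up in three dimensions) showing that $\nu$ is concentrated on a set at which parabolic rescaling produces nonconstant finite-energy harmonic bubbles into $\mathbb{S}^2$---either two-dimensional bubbles $\mathbb{R}^2\to\mathbb{S}^2$, conformally equivalent to maps $\mathbb{S}^2\to\mathbb{S}^2$, or homogeneous tangent maps $\mathbb{R}^3\to\mathbb{S}^2$ whose restriction to the unit sphere is harmonic. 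In either case the limit is a harmonic map $\phi\colon\mathbb{S}^2\to\overline{\mathbb{S}^2_+}$, the hemisphere constraint being inherited under scaling from $d_\epsilon^3\ge 0$. Since $\Delta\phi^3=-|\nabla\phi|^2\phi^3\le 0$ and $\phi^3\ge 0$ on the compact sphere, the minimum principle makes $\phi^3$ constant; integrating $-\Delta\phi^3=|\nabla\phi|^2\phi^3$ gives $|\nabla\phi|^2\phi^3\equiv 0$. If $\phi^3>0$ then $\phi$ is constant; if $\phi^3\equiv 0$ then $\phi$ takes values in the equator $\mathbb{S}^1$ and lifts (by simple-connectedness of $\mathbb{S}^2$) to a harmonic function $\mathbb{S}^2\to\mathbb{R}$, which is again constant. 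Thus $\nu\equiv 0$ and $\nabla d_\epsilon\to\nabla d$ strongly in $L^2_{\rm loc}$.

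With this strong convergence, all nonlinear terms in (\ref{LLF}) pass to the limit, so that $(u,d)$ is a weak solution. The global energy inequality (\ref{global_energy_ineq}) follows from the corresponding inequality for the approximations, the identification $\Delta d+|\nabla d|^2 d=\partial_t d+u\cdot\nabla d$ (valid on the limit since $|d|=1$), and weak lower semicontinuity of the various $L^2$ norms in space-time. The principal difficulty throughout is the three-dimensional blow-up/defect-measure analysis of the third paragraph; the upper-hemisphere hypothesis enters exactly and only at the moment of killing all possible bubbles.
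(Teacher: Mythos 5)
Your framework (Ginzburg--Landau approximation, maximum principle for $d_\epsilon^3$, Aubin--Lions, eliminating defect measure via the hemisphere constraint, lower semicontinuity for the energy inequality) matches the paper's, and your argument for killing the bubbles (minimum principle applied to $\phi^3$ on $\mathbb S^2$, then a lift when $\phi^3\equiv 0$) is correct and a pleasant alternative to the paper's degree-theoretic one ($\widehat d^3\ge -1+a$ forces $\widehat d$ to miss the south pole, hence degree $0$, hence constant). The essential difference, and where your proposal has a real gap, is the central compactness step. You propose to treat $\nu$ as a \emph{parabolic} defect measure and perform space-time blow-up on the coupled system, ``adapting \cite{LLW} together with parabolic blow-up in three dimensions''; but you never explain how to control the Navier--Stokes coupling under the parabolic rescaling, and, more seriously, parabolic blow-up of the director equation need not produce time-independent harmonic bubbles at all: in the heat flow setting (\cite{LW1,LW2,LW3}) it can produce \emph{quasi-harmonic spheres}, i.e.\ solutions of $\Delta\omega-\tfrac12 x\cdot\nabla\omega+|\nabla\omega|^2\omega=0$, to which the $\mathbb S^2$ minimum-principle argument does not directly apply. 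So, as stated, the step ``$\nu$ is concentrated on a set at which parabolic rescaling produces nonconstant finite-energy harmonic bubbles'' does not follow.

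The paper circumvents this entirely by a time-slice reduction, which is the key idea missing from your proposal. The global energy inequality gives $\int_0^\infty\int_\Omega |\Delta d_\epsilon+\epsilon^{-2}(1-|d_\epsilon|^2)d_\epsilon|^2<\infty$ uniformly in $\epsilon$, so for $L^1$-a.e.\ $t$ (more precisely, for $t$ in a ``good set'' $G_\Lambda^T$ with $|[0,T]\setminus G_\Lambda^T|\le C/\Lambda$) the slice tension field $\tau_\epsilon(t)=(\Delta d_\epsilon+\epsilon^{-2}(1-|d_\epsilon|^2)d_\epsilon)(t)$ is bounded in $L^2(\Omega)$ uniformly in $\epsilon$. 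At such a $t$, $d_\epsilon(t)$ is a solution of an \emph{elliptic} Ginzburg--Landau equation with $L^2$ right-hand side; the velocity $u_\epsilon$ disappears completely, having been absorbed into $\tau_\epsilon(t)$. One then develops a purely elliptic theory (stationarity identity, monotonicity formula, small-energy $H^1$-compactness, 1-rectifiable concentration set, blow-up along its approximate tangent line to a finite-energy harmonic map $\mathbb R^2\to\mathbb S^2_{-1+a}$ which must be constant) proving strong $H^1_{\rm loc}$ convergence of $d_\epsilon(t)\to d(t)$ for each good $t$, and then a Fubini/diagonal argument using the spatial $L^2$-strong convergence from Aubin--Lions patches the slice-wise statements into strong $L^2_tH^1_{x,\rm loc}$ convergence of $d_\epsilon$. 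This elliptic-at-good-slices scheme is simpler and more robust than the full parabolic blow-up you propose; to complete your route you would, at minimum, have to extend the quasi-harmonic-sphere classification of \cite{LW3} to the hemisphere-constrained case and establish a parabolic $\epsilon$-regularity theorem for the coupled 3D Ginzburg--Landau--Navier--Stokes system, neither of which is sketched.
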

\begin{rem} {\rm From the proof of Theorem \ref{existence}, it is clear that the weak solution $(u,d)$
 obtained in Theorem \ref{existence} enjoys the property that for $L^1$ a.e. $t\in (0,+\infty)$,
$d(t)\in H^1(\Omega, \mathbb S^2)$ is a {\rm suitable} approximated harmonic map to $\mathbb S^2$ with tension field
$\tau(t)=(\partial_t d+u\cdot\nabla d)(t)\in L^2(\Omega, \mathbb R^3)$ (see the definition \ref{suitable}).}
\end{rem}
Based on Theorem \ref{precomp1} and Theorem \ref{precomp2}, we also establish the following
compactness property for a class of weak solutions to (\ref{LLF}) that contains
those solutions constructed by Theorem \ref{existence}.

\begin{thm}\label{compactness0} For any $0<a\le 2$ and $0<T\le +\infty$, assume that
$(u_k, d_k):\Omega\times (0,T]\to\mathbb R^3\times \mathbb S^2_{-1+a}$ is a sequence
of weak solutions of (\ref{LLF}), that satisfies
\begin{equation}\label{energy_bound}
\sup_{k\ge 1}\Big[\sup_{0\le t\le T}\int_\Omega (|u_k|^2+|\nabla d_k|^2)+
\int_{0}^T\int_\Omega(|\nabla u_k|^2+|\Delta d_k+|\nabla d_k|^2 d_k|^2)\Big]
<+\infty,
\end{equation}
and for $L^1$ a.e. $t\in (0,+\infty)$, $d_k(t)\in H^1(\Omega, \mathbb S^2)$
is a {\rm{suitable}} approximated harmonic map with tension field $\tau_k(t)=(\partial_t d_k+u_k\cdot\nabla d_k)(t)\in
L^2(\Omega,\mathbb R^3)$.
Then there exists a weak solution $(u, d):\Omega\times (0,T]\to\mathbb R^3\times \mathbb S^2_{-1+a}$ of (\ref{LLF})
such that, after passing to possible subsequences,
\begin{equation}\label{compact}
u_k\rightarrow u, \nabla d_k\rightarrow\nabla d \ \ {\rm{in}}\ \ L^2_{\rm{loc}}(\Omega\times [0,T]).
\end{equation}

\end{thm}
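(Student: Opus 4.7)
\medskip

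\noindent\textbf{Proof plan for Theorem \ref{compactness0}.}

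The plan is to first extract weakly convergent subsequences from the energy bound (\ref{energy_bound}), then upgrade to strong convergence on compact subsets in space--time using two distinct mechanisms: an Aubin--Lions argument in the fluid variable, and the precompactness theorems for suitable approximated harmonic maps (Theorems \ref{precomp1} and \ref{precomp2}) in the director field. Once strong convergence is in hand, passage to the limit in the nonlinear terms of (\ref{LLF}) is essentially algebraic.

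\medskip

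\noindent\emph{Step 1: Weak compactness and time regularity.} From (\ref{energy_bound}) one has $u_k$ bounded in $L^\infty_t L^2_x\cap L^2_tH^1_x$ and $d_k$ bounded in $L^\infty_tH^1_x$. After passing to a subsequence, $u_k\rightharpoonup u$ and $\nabla d_k\rightharpoonup \nabla d$ in the respective weak/weak-$*$ topologies, and $|d_k|=1$ passes to $|d|=1$ a.e.\ by a.e.\ convergence. Using (\ref{LLF})$_1$ as a definition of $\partial_t u_k$, one controls $\partial_t u_k$ in $L^{4/3}_t(H^1_x)^*$ (estimating the stress $\nabla d_k\odot\nabla d_k$ in $L^{4/3}_tL^2_x$ by Ladyzhenskaya's inequality together with $\nabla d_k\in L^\infty_tL^2_x\cap L^2_tH^1_x$, using the equation (\ref{LLF})$_3$ and the $L^2$ tension-field bound to gain spatial regularity on $d_k$). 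Aubin--Lions then yields $u_k\to u$ in $L^2_{\rm loc}(\Omega\times[0,T])$. A parallel argument, treating $\Delta d_k+|\nabla d_k|^2d_k$ as an $L^2$ source for $d_k$, gives $d_k\to d$ in $L^2_{\rm loc}$; in particular $d_k\to d$ a.e.

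\medskip

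\noindent\emph{Step 2: Strong convergence of $\nabla d_k$ via the suitable approximated harmonic map structure.} This is the heart of the argument. By Fubini and (\ref{energy_bound}), the slice $d_k(\cdot,t)$ is a suitable approximated harmonic map into $\mathbb S^2_{-1+a}$ with tension field $\tau_k(t)\in L^2(\Omega)$ for a.e.\ $t\in(0,T)$, and along the strongly convergent subsequence from Step 1 we may assume $d_k(t)\to d(t)$ in $L^2(\Omega)$ and $\tau_k(t)\rightharpoonup \tau(t)$ in $L^2$ for a.e.\ such $t$. The hypothesis that the image avoids a neighborhood of the south pole is exactly what feeds Theorems \ref{precomp1}--\ref{precomp2}: the hemispherical constraint rules out bubble concentration (the would-be bubbles are harmonic spheres, which cannot have image in a proper closed hemisphere by the maximum principle applied to the height function $y^3$). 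Applied slicewise, Theorems \ref{precomp1} and \ref{precomp2} then give $\nabla d_k(t)\to\nabla d(t)$ in $L^2(\Omega)$ for a.e.\ $t$. Combined with the uniform bound $\|\nabla d_k(t)\|_{L^2}^2\le C$ and dominated convergence in $t$ on any finite subinterval of $(0,T]$, this upgrades to $\nabla d_k\to\nabla d$ in $L^2_{\rm loc}(\Omega\times[0,T])$.

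\medskip

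\noindent\emph{Step 3: Passage to the limit.} With $u_k\to u$ in $L^2_{\rm loc}$, $\nabla u_k\rightharpoonup \nabla u$ in $L^2$, and $\nabla d_k\to\nabla d$ in $L^2_{\rm loc}$, every nonlinearity in (\ref{LLF}) converges distributionally: $u_k\otimes u_k\to u\otimes u$ in $L^1_{\rm loc}$, $\nabla d_k\odot\nabla d_k\to\nabla d\odot\nabla d$ in $L^1_{\rm loc}$, $u_k\cdot\nabla d_k\to u\cdot\nabla d$ in $L^1_{\rm loc}$, and $|\nabla d_k|^2 d_k\to |\nabla d|^2 d$ in $L^1_{\rm loc}$ (here the a.e.\ convergence of $d_k$ together with $\|d_k\|_\infty\le 1$ is used). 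The pressure is recovered from the incompressibility and the momentum equation as usual. Finally, the pointwise bound $d_k^3\ge -1+a$ passes to $d^3\ge -1+a$ a.e., so $(u,d)$ is the required weak solution into $\mathbb S^2_{-1+a}$.

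\medskip

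\noindent\emph{Main obstacle.} The only nontrivial step is Step 2. Everything else is compactness of the Navier--Stokes flavor, where the Dirichlet-type stress $\nabla d\odot\nabla d$ is handled by having strong $H^1$ convergence of $d$. Without the hemisphere hypothesis the slicewise harmonic-map compactness would fail by bubbling (defect measures along the lines of Lin--Rivi\`ere), and the weak limit of $\nabla d_k\odot\nabla d_k$ would differ from $\nabla d\odot\nabla d$ by a concentration part carrying nontrivial energy. Thus the proof reduces to quoting the slicewise precompactness theorems and carrying out Fubini-type integration in $t$, provided the energy identity along subsequences is verified to ensure that the slice hypotheses of Theorems \ref{precomp1} and \ref{precomp2} actually hold for a.e.\ $t$.
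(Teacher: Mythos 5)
Your Step 1 and Step 3 are fine and broadly match the paper's proof. Your Step 2, however, has a genuine gap that the paper's argument is specifically designed to circumvent.

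The gap is in the assertion that ``Theorems \ref{precomp1}--\ref{precomp2} give $\nabla d_k(t)\to\nabla d(t)$ in $L^2(\Omega)$ for a.e.\ $t$'' along \emph{one} fixed subsequence, which would then feed dominated convergence. There are two issues. First, the hypothesis of Theorem~\ref{precomp2} requires a \emph{uniform in $k$} $L^2$-bound $\|\tau_k(t)\|_{L^2(\Omega)}\le L_2$ on the slice; from (\ref{energy_bound}) you only control $\int_0^T\|\tau_k(t)\|_{L^2}^2\,dt$, so for a fixed $t$ the quantities $\|\tau_k(t)\|_{L^2}$ need not be bounded as $k\to\infty$, and in fact only $\liminf_k\|\tau_k(t)\|_{L^2}^2<\infty$ is guaranteed a.e.\ (via Fatou). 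Thus the slicewise precompactness delivers strong $H^1_{\rm loc}$ convergence only along a $t$-dependent subsequence, and there is no diagonal argument over an uncountable set of times to extract one common subsequence. Second, even if each slice converged strongly along a $t$-dependent subsequence, dominated convergence requires the pointwise-in-$t$ convergence to hold along a \emph{single} subsequence for a.e.\ $t$; your write-up silently assumes this. The related claim that ``$\tau_k(t)\rightharpoonup\tau(t)$ in $L^2$ for a.e.\ $t$ along the subsequence from Step~1'' suffers from the same problem: without a uniform $L^2$-bound on $\tau_k(t)$ at fixed $t$, weak $L^2$ convergence on slices is not automatic.

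The paper avoids this by introducing a good/bad time decomposition $G_\Lambda^T$ and $B_\Lambda^T$ (Chebyshev gives $|B_\Lambda^T|\le C_0/\Lambda$) and then proving the space--time $L^2_tH^1_x$ convergence \emph{over the good set} by contradiction (Claim 8.4): if it failed, a Fubini-type selection produces time slices $t_l\in G_\Lambda^T$ and indices $k_l$ with $d_{k_l}(t_l)\to d_1$ and $d(t_l)\to d_2$ strongly in $H^1_{\rm loc}$ (by the compactness of the class ${\bf Y}(C_0,\Lambda,a;\Omega)$ applied to \emph{two fixed sequences of maps}, which sidesteps the need for a common subsequence in $t$), while the $L^2$-closeness of $d_{k_l}(t_l)$ to $d(t_l)$ forces $d_1=d_2$, contradicting the positive lower bound on $\|\nabla(d_1-d_2)\|_{L^2(\widetilde\Omega)}$. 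The contribution over $B_\Lambda^T$ is then absorbed by $\|\nabla d_k\|_{L^\infty_tL^2_x}^2\,|B_\Lambda^T|\lesssim \Lambda^{-1}$, and sending $\Lambda\to\infty$ finishes. You should replace your dominated-convergence step with this good/bad splitting and contradiction argument (or supply some other mechanism producing a common-in-$t$ strongly convergent subsequence, which I do not see how to do directly). Your identification of the hemisphere constraint as the mechanism ruling out bubbling on slices is correct and is indeed the central structural point, but it is deployed inside the contradiction argument, not in a direct slicewise passage.
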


\medskip
The proof of Theorem \ref{existence} is very delicate. The weak solution $(u, d)$ to (\ref{LLF})  is obtained as a weak limit
of a sequence of weak solutions $(u_\epsilon, d_\epsilon)$ to the Ginzburg-Landau approximated equation of (\ref{LLF}) 
(i.e., the equations (\ref{LLF})$_1$, (\ref{LLF})$_2$, and (\ref{GL_LLF}) as $\epsilon$ tends to zero. The key ingredient
is to show that $\nabla d_\epsilon$ subsequentially converges to $\nabla d$ in $L^2_{\rm{loc}}(\Omega\times (0,+\infty))$,
or equivalently the subsequential $L^2_{\rm{loc}}$-compacteness of $\nabla d_\epsilon$. This is achieved by showing

\noindent (i) $d_\epsilon^3\ge 0$ via the maximum principle, \\
(ii) $d_\epsilon(t) $ enjoys slice almost energy monotonicity property for $L^1$-a.e.
$t>0$,\\
(iii) at good time slices $t>0$, $d_\epsilon(t)$ enjoys both regularity estimate and $H^1$-precompactness property under the small energy condition, and\\
(iv) utilizing the range assumption of $d_\epsilon$ to rule out
the defect measures generated during the blow-up analysis of $d_\epsilon(t)$ for points at good time slices
where the small energy condition may not hold. As a consequence, we actually show that at any good time slice $t$,
the small energy condition holds everywhere.  

It is in step (iv) that we need to adapt and extend the blow-up techniques the authors have developed for the heat flow
of harmonic maps in \cite{LW1, LW2, LW3}.

\begin{rem} {\rm For general initial data $d_0\in H^1(\Omega, \mathbb S^2)$ (i.e., without the assumption  $d_0^3(x)\ge 0$ for a.e. $x\in\Omega$), our blow-up analysis scheme in this paper seems to suggest that defect measures $\nu$ may result during the
convergence procedure of ($u_\epsilon, d_\epsilon$) to ($u,d$) as $\epsilon\rightarrow 0$. The defect measure 
$\nu$ represents a transported
version of curvature motion of generalized curves, and ($u,d$) is a weak solution of the nematic liquid crystal flow (\ref{LLF})
away from the support of $\nu$, which is the energy concentration set of the convergence. This energy concentration set may 
correspond to dark threads that appear in the study of liquid crystal flows.
We believe that, motivated by earlier results on the heat flow of harmonic maps \cite{LW1,LW2, LW3},
($u,d$) and $\nu$ is a weak solution of the nematic liquid crystal flow (\ref{LLF}) coupled with transported versions of
generalized $1$-varifold flows in Brakke's sense. We plan to investigate these issues in a forthcoming paper.}

\end{rem}
The paper is written as follows. In section 2, we will establish some preliminary estimates of (\ref{GL_LLF}) by the weak maximum principle.
In section 3, we will establish a slice almost monotonicity inequality of
(\ref{GL_LLF}).
In section 4, we will prove an $\delta_0$-compactness property for weak solutions to (\ref{GL_LLF}). In section 5, we will establish
an $\delta_0$-regularity for suitable approximated harmonic map to
$\mathbb S^2$. In section 6, we will establish $H^1$-precompactness
for certain solutions of (\ref{GL_LLF}). In section 7,  we will establish $H^1$-precompactness for suitable approximated harmonic maps to
$\mathbb S^2_{-1+a}$. In section 8, we will prove both Theorem
\ref{existence} and Theorem \ref{compactness0}.

\medskip

\section{Maximum principle on the transported Ginzburg-Landau heat flow}
In this section, we will establish two pointwise estimates for the transported Ginzburg-Landau
heat flow by the weak maximum principle.

For $\epsilon>0$, consider the initial-boundary value problem of the transported Ginzburg-Landau heat flow:
\begin{equation}\label{TGL}
\begin{cases}
\begin{aligned}
\partial_td_\epsilon+u_\epsilon\cdot\nabla d_\epsilon&=\Delta d_\epsilon+\frac{1}{\epsilon^2}(1-|d_\epsilon|^2) d_\epsilon
\ \ {\rm{in}}\ \ \Omega\times (0,+\infty),\\
\nabla \cdot u_\epsilon &=0 \ \qquad\qquad\qquad\qquad\ \ \ \ {\rm{in}}\ \ \Omega\times (0,+\infty),\\
d_\epsilon&=g_\epsilon  \qquad\qquad\qquad\qquad \ \ \ \ {\rm{on}}\ (\Omega\times \{0\})\cup(\partial\Omega\times (0,+\infty)).
\end{aligned}
\end{cases}
\end{equation}
\begin{lemma}\label{MP1} For $0<T<+\infty$, assume $u_\epsilon\in L^2([0,T], {\bf J})$ and
$g_\epsilon\in H^1(\Omega,\mathbb R^3)$ satisfies
$$|g_\epsilon(x)|\le 1, \ {\rm{a.e.}}\ x\in\Omega.$$
Suppose $d_\epsilon\in L^2([0,T], H^1(\Omega,\mathbb R^3))$, with
$(1-|d_\epsilon|^2)\in L^2(\Omega\times [0,T])$,  solves
(\ref{TGL}). Then $|d_\epsilon(x,t)|\le 1$ for a.e. $(x,t)\in\Omega\times [0,T]$.
\end{lemma}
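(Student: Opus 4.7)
\medskip

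The plan is to run a Moser-type weak maximum principle on the scalar quantity $w = |d_\epsilon|^2 - 1$ and show $w_+ := \max(w,0) \equiv 0$.

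First I would derive the scalar equation satisfied by $|d_\epsilon|^2$. Taking the inner product of (\ref{TGL}) with $d_\epsilon$ and using the identities $d_\epsilon\cdot\partial_t d_\epsilon = \tfrac12\partial_t|d_\epsilon|^2$, $d_\epsilon\cdot\nabla d_\epsilon = \tfrac12\nabla|d_\epsilon|^2$, and $d_\epsilon\cdot\Delta d_\epsilon = \tfrac12\Delta|d_\epsilon|^2 - |\nabla d_\epsilon|^2$, I obtain
\begin{equation*}
\partial_t w + u_\epsilon\cdot\nabla w - \Delta w = -2|\nabla d_\epsilon|^2 - \frac{2}{\epsilon^2}\,w\,|d_\epsilon|^2
\end{equation*}
in the sense of distributions on $\Omega\times(0,T)$, with $w\in L^2([0,T],H^1(\Omega))$ and $w_+(\cdot,0)=0$ a.e.\ (since $|g_\epsilon|\le 1$) and $w_+=0$ on $\partial\Omega\times(0,T)$ (again by $|g_\epsilon|\le 1$), so that $w_+\in L^2([0,T],H^1_0(\Omega))$ is an admissible test function.

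Next I would multiply the above identity by $w_+$ and integrate over $\Omega$, justifying the chain rule $w_+\,\partial_t w = \tfrac12\partial_t(w_+^2)$ and $w_+\,\nabla w = \nabla(\tfrac12 w_+^2)$ by the standard truncation/mollification argument. The convective term then vanishes because $\nabla\cdot u_\epsilon = 0$ and $w_+^2\in L^1([0,T],W^{1,1}_0(\Omega))$: $\int_\Omega u_\epsilon\cdot\nabla(\tfrac12 w_+^2)\,dx = 0$. Integration by parts gives $-\int_\Omega w_+\,\Delta w = \int_\Omega |\nabla w_+|^2$, using that $\nabla w = \nabla w_+$ on $\{w>0\}$. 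The result is
\begin{equation*}
\frac12\frac{d}{dt}\int_\Omega w_+^2\,dx + \int_\Omega |\nabla w_+|^2\,dx + 2\int_\Omega w_+\,|\nabla d_\epsilon|^2\,dx + \frac{2}{\epsilon^2}\int_\Omega w_+^2\,|d_\epsilon|^2\,dx = 0,
\end{equation*}
where I used that on $\{w>0\}$ one has $w\,w_+ = w_+^2$.

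Every term on the left is nonnegative (this is the key structural observation: the reaction term $\tfrac{1}{\epsilon^2}(1-|d_\epsilon|^2)d_\epsilon$ has the \emph{correct sign} for the maximum principle when $|d_\epsilon|>1$). Consequently $\tfrac{d}{dt}\int_\Omega w_+^2\le 0$, and together with $w_+(\cdot,0)\equiv 0$ this yields $w_+\equiv 0$ on $\Omega\times[0,T]$, i.e.\ $|d_\epsilon|\le 1$ almost everywhere. The main technical point I expect to need care with is the rigorous justification of the chain rule and the vanishing of the transport term at the regularity level $u_\epsilon\in L^2_tH^1_x$ and $d_\epsilon\in L^2_tH^1_x$ with $(1-|d_\epsilon|^2)\in L^2$; this is handled by truncation $w_+^{(k)} := \min(w_+,k)\in L^\infty\cap L^2_tH^1_x$, performing the computation for $w_+^{(k)}$ (where $u_\epsilon\cdot\nabla (w_+^{(k)})^2\in L^1$ is integrable in space-time and its integral vanishes by divergence-freeness of $u_\epsilon$), and passing $k\to\infty$ using monotone convergence.
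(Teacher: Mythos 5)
Your proposal is correct and follows essentially the same route as the paper: derive the scalar equation for $|d_\epsilon|^2-1$ by contracting (\ref{TGL}) with $d_\epsilon$, observe that the Ginzburg-Landau reaction term has the right sign on the set where $|d_\epsilon|>1$, and test the equation against the nonnegative part, with divergence-freeness of $u_\epsilon$ killing the transport contribution. The paper performs the truncation up front (their $v_\epsilon^k$ is exactly your $\min(w_+,k^2-1)$) whereas you present the formal energy identity and then describe the truncation as a justification step, but the content is identical.
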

\begin{proof} For any $k>1$, define $v_\epsilon^k:\Omega\times [0,T]\to \mathbb R_+$ by letting
$$v_\epsilon^k(x,t)=\begin{cases}
k^2-1& \ {\rm{if}}\ |d_\epsilon(x,t)|>k,\\
 |d_\epsilon(x,t)|^2-1 & \ {\rm{if}}
\ 1<|d_\epsilon(x,t)|\le k,\\
0 & \ {\rm{if}}\ |d_\epsilon(x,t)|\le 1.
\end{cases}
$$
Then direct calculations imply that $v_\epsilon^k$ satisfies
\begin{eqnarray}\label{2.2}
\partial_t v_\epsilon^k+u_\epsilon\cdot\nabla v_\epsilon^k &=&
\Delta v_\epsilon^k-2\chi_{\{1<|d_\epsilon|\le k\}}\big(|\nabla d_\epsilon|^2+\frac{1}{\epsilon^2}(|d_\epsilon|^2-1)|d_\epsilon|^2\big)
\nonumber\\
&\le& \Delta v_\epsilon \ \ {\rm{in}}\ \Omega\times [0,T]
\end{eqnarray}
in the weak sense.
Since $v_\epsilon^k=0$ on $(\Omega\times \{0\})\cup(\partial\Omega\times (0,T])$ and $0\le v_\epsilon^k\le k^2-1$ in $\Omega\times [0,T]$,
we can multiply the (\ref{2.2}) by $v_\epsilon^k$ and integrate it over $\Omega\times [0,s]$ for any $0<s\le T$ to obtain
\begin{eqnarray*}
\int_\Omega |v_\epsilon^k(s)|^2+2\int_0^s\int_\Omega |\nabla v_\epsilon^k|^2
&\le& -\int_0^s\int_\Omega u_\epsilon\cdot\nabla (|v_\epsilon^k|^2)=0,
\end{eqnarray*}
where we have used the fact that $\nabla \cdot u_\epsilon=0$ in the last step.
Hence it follows that $v_\epsilon^k=0$ a.e. in $\Omega\times [0,T]$ and hence $|d_\epsilon|\le 1$ a.e. in $\Omega\times [0,T]$.
\end{proof}

\begin{lemma}\label{MP2}  For $0<T<+\infty$, assume $u_\epsilon\in L^2([0,T], {\bf J})$ and
$g_\epsilon\in H^1(\Omega,\mathbb R^3)$ satisfies
$$|g_\epsilon(x)|\le 1\ {\rm{and}}\ g_\epsilon^3(x)\ge  0, \ {\rm{a.e.}}\ x\in\Omega.$$
If $d_\epsilon\in L^2([0,T], H^1(\Omega,\mathbb R^3))$, with $(1-|d_\epsilon|^2)\in L^2(\Omega\times [0,T])$, solves
(\ref{TGL}), then
$$|d_\epsilon(x,t)|\le 1 \ {\rm{and}}\ d_\epsilon^3(x,t)\ge 0,\  {\rm{a.e.}} \ (x,t)\in\Omega\times [0,T].$$
\end{lemma}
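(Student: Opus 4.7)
The first inequality $|d_\epsilon|\le 1$ is immediate from Lemma \ref{MP1}, whose hypotheses (in particular $|g_\epsilon|\le 1$, together with the regularity of $u_\epsilon$ and $d_\epsilon$) are all assumed here. So the only new content to establish is $d_\epsilon^3\ge 0$ a.e.

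My plan is to extract the scalar equation satisfied by the third component,
$$\partial_t d_\epsilon^3 + u_\epsilon\cdot\nabla d_\epsilon^3 - \Delta d_\epsilon^3 \;=\; \frac{1}{\epsilon^2}(1-|d_\epsilon|^2)\,d_\epsilon^3,$$
and run an energy estimate on its negative part. Set $w:=(d_\epsilon^3)^-=\max\{-d_\epsilon^3,0\}\ge 0$; the assumption $g_\epsilon^3\ge 0$ forces $w\equiv 0$ on $(\Omega\times\{0\})\cup(\partial\Omega\times(0,T])$, so in particular $w(\cdot,t)\in H^1_0(\Omega)$ for a.e.\ $t$, and $w\in L^2([0,T],H^1_0(\Omega))$.

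I would then test the scalar equation against $-w$ over $\Omega\times[0,s]$ for $s\in(0,T]$. The manipulations parallel those used for $v_\epsilon^k$ in Lemma \ref{MP1}: the time term yields $\frac{1}{2}\partial_t w^2$ by the standard truncation identity; integration by parts converts $-\int(-w)\Delta d_\epsilon^3$ to $\int|\nabla w|^2$ (boundary term gone since $w|_{\partial\Omega}=0$); the transport term $\int(-w)\,u_\epsilon\cdot\nabla d_\epsilon^3=\frac{1}{2}\int u_\epsilon\cdot\nabla w^2$ vanishes after one more integration by parts using $\nabla\cdot u_\epsilon=0$ and $w|_{\partial\Omega}=0$; and the pointwise identity $(-w)\,d_\epsilon^3=w^2$ (trivial on $\{w=0\}$, and equal to $(-w)(-w)$ on $\{d_\epsilon^3<0\}$) converts the reaction term. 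The outcome is the identity
$$\frac{1}{2}\int_\Omega w^2(\cdot,s) \;+\; \int_0^s\!\!\int_\Omega|\nabla w|^2 \;=\; \frac{1}{\epsilon^2}\int_0^s\!\!\int_\Omega (1-|d_\epsilon|^2)\,w^2.$$

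This is where the first conclusion of the lemma is used decisively: since $|d_\epsilon|\le 1$, one has $0\le 1-|d_\epsilon|^2\le 1$, so the right-hand side is bounded by $\epsilon^{-2}\int_0^s\!\int_\Omega w^2$. Dropping the nonnegative gradient term and applying Gronwall's inequality to $y(s):=\int_\Omega w^2(\cdot,s)$, which vanishes at $s=0$, yields $w\equiv 0$ on $\Omega\times[0,T]$, i.e.\ $d_\epsilon^3\ge 0$ a.e. The only point requiring care is justifying the formal testing against the non-smooth function $-w$ at the regularity given for $d_\epsilon$; I expect this to be the main technical, but not conceptual, obstacle, and it is handled in the same spirit as the cutoff $v_\epsilon^k$ construction in the proof of Lemma \ref{MP1} (alternatively, via a Steklov averaging argument to legitimize $\frac{d}{ds}\int_\Omega w^2$).
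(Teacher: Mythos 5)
Your argument is correct and follows the same strategy as the paper: both isolate the scalar equation for $d_\epsilon^3$, use $|d_\epsilon|\le 1$ from Lemma~\ref{MP1} to control the sign/size of the reaction coefficient $\frac{1}{\epsilon^2}(1-|d_\epsilon|^2)$, and run an energy estimate on the negative part of $d_\epsilon^3$, which vanishes at $t=0$ and on $\partial\Omega$. The only (cosmetic) difference is that the paper pre-absorbs the nonnegative reaction coefficient by the explicit integrating factor $e^{-t/\epsilon^2}$, making $c_\epsilon\le 0$ so that the truncated energy is a priori non-increasing, whereas you close the estimate with Gronwall's inequality; the two devices are equivalent, and the Stampacchia-type justification of testing against the nonsmooth truncation that you flag is exactly the same issue the paper handles via the $v_\epsilon^k$ construction in Lemma~\ref{MP1}.
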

\begin{proof} By Lemma \ref{MP1}, we have that $|d_\epsilon|\le 1$ a.e. in $\Omega\times [0,T]$ and hence
$$0\le \frac{1}{\epsilon^2}(1-|d_\epsilon|^2)\le \frac{1}{\epsilon^2}.$$
Define $\widetilde{d_\epsilon^3}(x,t)=e^{-\frac{t}{\epsilon^2}} d_\epsilon^3(x,t)$. Then we have
$$\partial_t\widetilde{d_\epsilon^3}+u_\epsilon\cdot\nabla \widetilde{d_\epsilon^3}
-\Delta\widetilde{d_\epsilon^3}=\big(\frac{1}{\epsilon^2}(1-|d_\epsilon|^2)-\frac{1}{\epsilon^2}\big)\widetilde{d_\epsilon^3}
\equiv c_\epsilon(x,t)\widetilde {d_\epsilon^3},$$
where $c_\epsilon(x,t):=\big(\frac{1}{\epsilon^2}(1-|d_\epsilon|^2)-\frac{1}{\epsilon^2}\big)(x,t)$ satisfies
$$c_\epsilon\le 0 \ {\rm{a.e.}}\ \Omega\times [0,T].$$
Define $(\widetilde{d_\epsilon^3})^{-}=-\min\big\{\widetilde{d_\epsilon^3}, 0\big\}$ in $\Omega\times [0,T]$.
Then we have
\begin{equation}\label{2.3}
\partial_t(\widetilde{d_\epsilon^3})^{-}+u_\epsilon\cdot\nabla (\widetilde{d_\epsilon^3})^{-}
-\Delta(\widetilde{d_\epsilon^3})^{-}=c_\epsilon(x,t)(\widetilde {d_\epsilon^3})^{-}.
\end{equation}
Since
$$\widetilde{d_\epsilon^3}=e^{-\frac{t}{\epsilon^2}}g_\epsilon^3\ge 0\  {\rm{on}}\ (\Omega\times\{0\})\cup
(\partial\Omega\times [0,T]),$$
it follows that
$$(\widetilde{d_\epsilon^3})^{-}=0 \ {\rm{on}}\ (\Omega\times \{0\})\cup(\partial\Omega \times [0,T]).$$
Multiplying (\ref{2.3}) by $(\widetilde{d_\epsilon^3})^{-}$, integrating the resulting equation over
$\Omega\times [0,s]$ for $0<s\le T$, and using the fact that $\nabla\cdot u_\epsilon=0$ and $c_\epsilon\le 0$,
we obtain
\begin{eqnarray*}
\int_\Omega|(\widetilde{d_\epsilon^3})^{-}|^2(s)
+2\int_0^s\int_\Omega |\nabla (\widetilde{d_\epsilon^3})^{-}|^2
&=& -\int_0^s\int_\Omega u_\epsilon\cdot\nabla |(\widetilde{d_\epsilon^3})^{-}|^2
+2\int_0^s\int_\Omega c_\epsilon(x,t)|(\widetilde {d_\epsilon^3})^{-}|^2\\
&=&2\int_0^s\int_\Omega c_\epsilon(x,t)|(\widetilde {d_\epsilon^3})^{-}|^2\le 0.
\end{eqnarray*}
Hence it follows that $(\widetilde{d_\epsilon^3})^{-}=0$ a.e.  $\Omega\times [0,T]$. This implies that
$$d_\epsilon^3\ge 0 \ \ {\rm{a.e.}}\ \ \Omega\times [0,T].$$
This completes the proof.
\end{proof}

\medskip

\section{Monotonicity formula for approximated Ginzburg-Landau equation}

In this section, we will derive the monotonicity formula for approximated Ginzburg-Landau equations with $L^2$-tension fields
in $\Omega\subset\mathbb R^3$.

\begin{lemma} \label{AGL0}
Let $d_\epsilon\in H^1(\Omega, \mathbb R^3)$ be a solution of the approximated Ginzburg-Landau equation:
\begin{equation}\label{AGL}
\Delta d_\epsilon+\frac{1}{\epsilon^2}(1-|d_\epsilon|^2) d_\epsilon=\tau_\epsilon\ \ {\rm{in}}\ \ \Omega.
\end{equation}
Assume $|d_\epsilon|\le 1$ a.e. $\Omega$ and $\tau_\epsilon\in L^2(\Omega)$. Then, for any $x_0\in\Omega$ and
$0<r\le R<{\rm{dist}}(x_0,\partial\Omega)$, it holds
\begin{equation}\label{AMI1}
\Phi^\epsilon(R)\ge \Phi^\epsilon(r)+\frac12\int_{B_R(x_0)\setminus B_r(x_0)}
 \frac{1}{|x-x_0|}\Big(\big|\frac{\partial d_\epsilon}{\partial |x-x_0|}\big|^2+\frac{(1-|d_\epsilon|^2)^2}{\epsilon^2}\Big),
\end{equation}
where
\begin{equation}\label{RNE}
\Phi^\epsilon(\rho)=\frac1{\rho}\int_{B_\rho(x_0)}\big(e_\epsilon(d_\epsilon)-\langle (x-x_0)\cdot\nabla d_\epsilon, \tau_\epsilon\rangle\big)\
+\frac12\int_{B_\rho(x_0)}|x-x_0||\tau_\epsilon|^2
\end{equation}
for $\rho>0$, and $e_\epsilon(d_\epsilon)=\big(\frac12{|\nabla d_\epsilon|^2}+\frac{3}{4\epsilon^2}
(1-|d_\epsilon|^2)^2\big)$ denotes the (modified) Ginzburg-Landau
energy density of $d_\epsilon$.
\end{lemma}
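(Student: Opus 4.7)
The plan is to use a classical Pohozaev/monotonicity argument adapted to the presence of the $L^2$-tension field $\tau_\epsilon$. After translating so that $x_0=0$, I first observe that the bound $|d_\epsilon|\le 1$ combined with $\tau_\epsilon\in L^2$ and equation~(\ref{AGL}) forces $\Delta d_\epsilon\in L^2_{\mathrm{loc}}(\Omega)$, hence $d_\epsilon\in H^2_{\mathrm{loc}}(\Omega)$ by standard elliptic regularity. This regularity is enough to legitimize all the integration by parts below for a.e.\ radius $\rho$.

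The core computation I plan to carry out is to test (\ref{AGL}) against the radial dilation field $x\cdot\nabla d_\epsilon$ and integrate over $B_\rho$. The Dirichlet term yields the standard three-dimensional Pohozaev contributions, producing an interior piece $\tfrac12\int_{B_\rho}|\nabla d_\epsilon|^2$ and a boundary piece $\rho\int_{\partial B_\rho}(\tfrac12|\nabla d_\epsilon|^2-|\partial_r d_\epsilon|^2)$. For the potential term I will rewrite $\frac{1}{\epsilon^2}(1-|d_\epsilon|^2)\,d_\epsilon\cdot(x\cdot\nabla d_\epsilon)=-\frac{1}{4\epsilon^2}\,x\cdot\nabla(1-|d_\epsilon|^2)^2$ and integrate by parts; the factor $\nabla\cdot x=3$ is what conspires to produce the coefficient $\tfrac{3}{4\epsilon^2}$ in the interior (exactly the coefficient appearing in the modified density $e_\epsilon$), while leaving only $\tfrac{1}{4\epsilon^2}$ on the boundary. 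Putting the two pieces together will give the Pohozaev identity
\begin{equation*}
\int_{B_\rho}e_\epsilon(d_\epsilon)
= \rho\!\int_{\partial B_\rho}\!\Big(\tfrac12|\nabla d_\epsilon|^2
+\tfrac{1}{4\epsilon^2}(1-|d_\epsilon|^2)^2\Big)
-\rho\!\int_{\partial B_\rho}\!\Big|\tfrac{\partial d_\epsilon}{\partial r}\Big|^2
+\int_{B_\rho}\!\langle x\cdot\nabla d_\epsilon,\tau_\epsilon\rangle.
\end{equation*}

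With this identity in hand, I will differentiate each of the three pieces of $\Phi^\epsilon$ in (\ref{RNE}) using $\frac{d}{d\rho}(\rho^{-1}\!\int_{B_\rho}\!f)=-\rho^{-2}\!\int_{B_\rho}\!f+\rho^{-1}\!\int_{\partial B_\rho}\!f$, and substitute the Pohozaev identity to eliminate the interior $e_\epsilon$ integral. Everything should then collapse onto $\partial B_\rho$, and the three $\tau_\epsilon$-dependent surface contributions will assemble into the single perfect square $\big|\tfrac{1}{\sqrt{2\rho}}\partial_r d_\epsilon-\sqrt{\rho/2}\,\tau_\epsilon\big|^2$. Dropping this nonnegative square will leave
\begin{equation*}
\frac{d\Phi^\epsilon}{d\rho}\;\ge\;
\tfrac{1}{2\rho}\int_{\partial B_\rho}\!\Big(\big|\tfrac{\partial d_\epsilon}{\partial r}\big|^2
+\tfrac{1}{\epsilon^2}(1-|d_\epsilon|^2)^2\Big),
\end{equation*}
and integrating in $\rho$ from $r$ to $R$ together with the coarea formula will produce exactly (\ref{AMI1}).

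I anticipate no serious analytic obstacle. The place requiring real care is not analytic but combinatorial: the unusual coefficient $\tfrac{3}{4\epsilon^2}$ in the modified density $e_\epsilon$, and the two $\tau_\epsilon$-corrections $-\tfrac1\rho\!\int_{B_\rho}\langle x\cdot\nabla d_\epsilon,\tau_\epsilon\rangle$ and $\tfrac12\!\int_{B_\rho}|x||\tau_\epsilon|^2$ added into $\Phi^\epsilon$, are engineered precisely so that the Pohozaev identity followed by the square completion closes up cleanly. Verifying this coefficient matching is the one step where a slip would derail the computation, but beyond that the proof is entirely routine.
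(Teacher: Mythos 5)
Your proposal is correct and follows essentially the same route as the paper: establish $W^{2,2}_{\mathrm{loc}}$ regularity from $|d_\epsilon|\le 1$ and $\tau_\epsilon\in L^2$, derive the Pohozaev identity by multiplying (\ref{AGL}) by $x\cdot\nabla d_\epsilon$ and integrating over $B_\rho$, differentiate the renormalized quantity, substitute the identity, and absorb the cross term. Your "complete the square" on $\partial B_\rho$ is exactly the Young/H\"older step the paper uses, so the two arguments coincide.
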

\begin{proof} Since $|d_\epsilon|\le 1$ a.e. $\Omega$, we have that
$$\Big\|\tau_\epsilon+\frac{1}{\epsilon^2}(|d_\epsilon|^2-1)d_\epsilon\Big\|_{L^2(\Omega)}
\le \big\|\tau_\epsilon\big\|_{L^2(\Omega)}+\frac{|\Omega|}{\epsilon^2}.$$
Hence by the $W^{2,2}$-estimate,  we have that $d_\epsilon\in W^{2,2}_{\rm{loc}}(\Omega)$.

For simplicity, assume $x_0=0\in\Omega$ and write $d$ for $d_\epsilon$.
Multiplying the equation (\ref{AGL}) by $x\cdot\nabla d$ and integrating over $B_\rho\subset\Omega$ yields
$$\rho\int_{\partial B_\rho}\Big(\big|\frac{\partial d}{\partial |x|}\big|^2+\frac{(1-|d|^2)^2}{2\epsilon^2}\Big)+\int_{B_\rho}e_\epsilon(d)-\rho\int_{\partial B_\rho}e_\epsilon(d)
=\int_{B_\rho}\langle \tau_\epsilon, x\cdot\nabla d\rangle.
$$
This implies
\begin{eqnarray*}
&&\frac{d}{d\rho}\Big[\frac1{\rho}\int_{B_\rho}(e_\epsilon(d)-\langle \tau_\epsilon, x\cdot\nabla d\rangle)\Big]\\
&=&\frac1{\rho^2}\Big[\rho\int_{\partial B_\rho}e_\epsilon(d)-\int_{B_\rho}e_\epsilon(d)+\int_{B_\rho}\langle \tau_\epsilon, x\cdot\nabla d\rangle\Big]
-\frac1{\rho}\int_{\partial B_\rho}\langle \tau_\epsilon, x\cdot\nabla d\rangle
\\
&=&\frac1{\rho}\int_{\partial B_\rho}\Big(\big|\frac{\partial d}{\partial |x|}\big|^2
+\frac{(1-|d|^2)^2}{2\epsilon^2}\Big)-\frac1{\rho}\int_{\partial B_\rho}\langle \tau_\epsilon, x\cdot\nabla d\rangle.
\end{eqnarray*}
By H\"older's inequality, we have
$$\Big|\frac1{\rho}\int_{\partial B_\rho}\langle \tau_\epsilon, x\cdot\nabla d\rangle\Big|
\le \frac1{2\rho}\int_{\partial B_\rho}\big|\frac{\partial d}{\partial |x|}\big|^2
+\frac12 \rho\int_{\partial B_\rho}|\tau_\epsilon|^2. $$
Thus we obtain
$$
\frac{d}{d\rho}\Big[\frac1{\rho}\int_{B_\rho}(e_\epsilon(d)-\langle \tau_\epsilon, x\cdot\nabla d\rangle)\Big]
\ge
\frac1{2\rho}\int_{\partial B_\rho}\Big(\big|\frac{\partial d}{\partial |x|}\big|^2
+\frac{(1-|d|^2)^2}{\epsilon^2}\Big)-\frac12\rho\int_{\partial B_\rho}|\tau_\epsilon|^2.
$$
Integrating this inequality over $r\le\rho\le R$ yields (\ref{AMI1}).
\end{proof}

\medskip
\section{$\delta_0$-compactness property of approximated Ginzburg-Landau equation}
In this section, we will prove an $\delta_0$-regularity property for approximated Ginzburg-Landau equations with $L^2$-tension fields  in $\Omega\subset\mathbb R^3$.

First we need to recall some notations. For $1\le p<+\infty$, $0\le q\le 3$, and an open set $U\subset\mathbb R^3$,
we define the Morrey space $M^{p,q}(U)$  by
\begin{equation}\label{morrey_space}
M^{p,q}(U):=\Big\{f\in L^p_{\rm{loc}}(U)\ \Big|
\ \big\|f\big\|_{M^{p,q}(U)}^q\equiv\sup_{B_r\subset U} \frac1{r^{3-q}}\int_{B_r}|f|^p<+\infty\Big\}.
\end{equation}

Now we consider approximated Ginzburg-Landau equations with $L^2$-tension fields.
For $0<\epsilon\le 1$, let $d_\epsilon\in H^1(\Omega, \mathbb R^3)$ be a sequence of solutions to
\begin{equation}\label{AGL2}
\Delta d_\epsilon+\frac{1}{\epsilon^2}(1-|d_\epsilon|^2)d_\epsilon=\tau_\epsilon \ \ {\rm{in}}\ \ \Omega,
\end{equation}
 with uniformly bounded Ginzburg-Landau
energies and $L^2$-norms of $\tau_\epsilon$, i.e.,
\begin{equation}
\sup_{0<\epsilon\le 1}E_\epsilon(d_\epsilon)=\int_\Omega \big(\frac12|\nabla d_\epsilon|^2+\frac{1}{4\epsilon^2}(1-|d_\epsilon|^2)^2\big)\le L_1<+\infty,\label{bounded_GL}
\end{equation}
and
\begin{equation}\label{tension-2-bound}
\sup_{0<\epsilon\le 1}\big\|\tau_\epsilon\big\|_{L^2(\Omega)}\le L_2<+\infty.
\end{equation}
After taking a possible subsequence, we may assume that there exists $d\in H^1(\Omega,\mathbb S^2)$ such that
$$d_\epsilon\rightharpoonup d \ {\rm{in}}\ H^1(\Omega) \ {\rm{and\ strongly\ in}}\  L^2(\Omega),$$
as $\epsilon\rightarrow 0$.

A crucial observation we make is the strong convergence of $d_\epsilon$ to $d$
in $H^1$ under the smallness condition of renormalized
Ginzburg-Landau energies. More precisely, we have
\begin{lemma}\label{small_holder}
For any $L_1, L_2>0$, there exist $\delta_0>0$  and $r_0>0$
such that for $0<\epsilon\le 1$, if $d_\epsilon\in H^1(\Omega,\mathbb R^3)$, with $|d_\epsilon|\le 1$
a.e. $\Omega$,  is a family of solutions of (\ref{AGL2})
satisfying (\ref{bounded_GL}) and  (\ref{tension-2-bound}), and
\begin{equation}\label{small1}
\frac1{r_1}\int_{B_{r_1}(x_0)}e_\epsilon(d_\epsilon)\le \delta_0^2,
\end{equation}
for some $x_0\in\Omega$ and $0<r_1\le\frac12\min\{r_0, {\rm{dist}}(x_0,\partial\Omega)\}$, then
 after passing to subsequences,
$d_\epsilon\rightarrow d$ in $H^1(B_{\frac{r_1}4}(x_0),\mathbb R^3)$ as $\epsilon\rightarrow 0$.
\end{lemma}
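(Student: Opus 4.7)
The plan is to combine the almost-monotonicity formula from Lemma \ref{AGL0} with a Ginzburg-Landau $\varepsilon$-regularity estimate that is uniform in $\epsilon$, in order to upgrade the weak $H^1$-convergence $d_\epsilon\rightharpoonup d$ to strong $H^1$-convergence on $B_{r_1/4}(x_0)$. Informally, the smallness of the renormalized energy rules out the formation of any defect measure when passing to the limit $\epsilon\to 0$.

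First I would propagate the smallness hypothesis to all nearby centres at all smaller scales. For any $y\in B_{r_1/2}(x_0)$, since $B_{r_1/2}(y)\subset B_{r_1}(x_0)$, hypothesis (\ref{small1}) yields $(r_1/2)^{-1}\int_{B_{r_1/2}(y)}e_\epsilon(d_\epsilon)\le 2\delta_0^2$. The tension corrections appearing in $\Phi^\epsilon_y(\rho)$ from (\ref{RNE}) are bounded by H\"older's inequality and the uniform assumptions (\ref{bounded_GL})--(\ref{tension-2-bound}) by expressions of order $\rho^{1/2}$. Inserting these into the monotonicity inequality (\ref{AMI1}) centred at $y$ and choosing $r_0$ small gives a uniform Morrey-type estimate
$$\sup_{y\in B_{r_1/2}(x_0)}\sup_{0<\rho\le r_1/2}\frac{1}{\rho}\int_{B_\rho(y)}e_\epsilon(d_\epsilon)\le C(L_1,L_2)\big(\delta_0^2+r_0^{1/2}\big),$$
i.e., the $M^{1,2}$-norm of $e_\epsilon(d_\epsilon)$ on $B_{r_1/2}(x_0)$ is small, independently of $\epsilon$.

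The second step is an $\varepsilon$-regularity statement: for $\delta_0$ and $r_0$ small enough (depending on $L_1,L_2$), the above Morrey smallness yields
$$\sup_{B_{r_1/4}(x_0)}\Big(|\nabla d_\epsilon|^2+\epsilon^{-2}(1-|d_\epsilon|^2)^2\Big)\le Cr_1^{-2}.$$
I would derive this from the Bochner-type subequation $-\Delta e_\epsilon(d_\epsilon)\le C|\nabla d_\epsilon|^2\,e_\epsilon(d_\epsilon)+C|\tau_\epsilon|^2$ by Moser iteration, in which the $|\nabla d_\epsilon|^2$-coefficient is absorbed via the Morrey smallness from Step 1 and the inhomogeneous source $|\tau_\epsilon|^2$ is controlled because the Pohozaev-type tension term in $\Phi^\epsilon_y$ carries an explicit $\rho L_2^2$ bound (so the scale-invariant norm of $\tau_\epsilon$ on small balls is small). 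This is an adaptation to the Ginzburg-Landau setting of the $\varepsilon$-regularity scheme the authors have developed for approximated harmonic maps in \cite{LW1,LW2,LW3}.

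Given the pointwise bound, $d_\epsilon$ is uniformly Lipschitz on $B_{r_1/4}(x_0)$ and $(1-|d_\epsilon|^2)^2\le C\epsilon^2/r_1^2$ there, so Arzel\`a-Ascoli combined with the weak $H^1$-limit $d_\epsilon\rightharpoonup d$ yields $d_\epsilon\to d$ uniformly on $B_{r_1/4}(x_0)$ with $|d|\equiv 1$ a.e. Strong $H^1$-convergence then follows by testing (\ref{AGL2}) against $(d_\epsilon-d)\eta^2$ for a cut-off $\eta$ supported in $B_{r_1/4}(x_0)$, integrating by parts, and using the identity $(1-|d_\epsilon|^2)\,d_\epsilon\cdot(d_\epsilon-d)=-\tfrac12(1-|d_\epsilon|^2)^2+\tfrac12(1-|d_\epsilon|^2)|d_\epsilon-d|^2$: the singular factor $\epsilon^{-2}(1-|d_\epsilon|^2)$ recombines into the uniformly-$L^1$-small energy density $\epsilon^{-2}(1-|d_\epsilon|^2)^2$ plus an error controlled by $\|d_\epsilon-d\|_\infty\to 0$. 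The principal obstacle is the $\varepsilon$-regularity step: because the Ginzburg-Landau potential carries the singular prefactor $\epsilon^{-2}$, the Moser iteration must be arranged so that its iterated quantities remain scale-invariant uniformly in $\epsilon$, and this is precisely why the modified density $e_\epsilon(d_\epsilon)=\tfrac12|\nabla d_\epsilon|^2+\tfrac{3}{4\epsilon^2}(1-|d_\epsilon|^2)^2$ (with the $3/(4\epsilon^2)$ coefficient rather than the classical $1/(4\epsilon^2)$) is the natural quantity, both for the monotonicity formula and for the regularity analysis.
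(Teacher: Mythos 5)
The central step of your argument is a genuinely different route from the paper's (which uses polar decomposition $d_\epsilon=f_\epsilon\omega_\epsilon$, a Hodge decomposition of $d(d d_\epsilon\times d_\epsilon)$, and Hardy--BMO duality to derive $C^{1/2}$ and $W^{1,p}$ bounds), and unfortunately it contains a genuine gap. Your Step~2 claims that a Bochner-type subequation plus Moser iteration yields a uniform Lipschitz bound
$$\sup_{B_{r_1/4}(x_0)}\Big(|\nabla d_\epsilon|^2+\epsilon^{-2}(1-|d_\epsilon|^2)^2\Big)\le C r_1^{-2}.$$
This cannot work under the hypotheses of the lemma. First, the tension field $\tau_\epsilon$ is only assumed to lie in $L^2(\Omega)$, so the Bochner computation produces a term $\langle\nabla d_\epsilon,\nabla\tau_\epsilon\rangle$ that involves derivatives of $\tau_\epsilon$ with no available control. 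Even if one circumvents that by integration by parts and arrives at a subequation with source $|\tau_\epsilon|^2$, this source is only in $L^1(\Omega)$; Moser iteration for $-\Delta v\le cv+f$ in $\mathbb R^3$ requires $f\in L^p$ with $p>3/2$, and $L^1$ is well below the threshold, so no $L^\infty$ bound on $e_\epsilon(d_\epsilon)$ follows. In fact the claimed estimate is simply false: an $L^2$ tension field gives at best $W^{2,2}\hookrightarrow C^{1/2}$ regularity by elliptic theory, not Lipschitz continuity, and the paper correctly stops at precisely this $C^{1/2}$ regularity (its inequality (\ref{holder_est})).

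Because the Lipschitz bound fails, your Step~3 is also in trouble: after testing (\ref{AGL2}) against $(d_\epsilon-d)\eta^2$, the residual term $\tfrac12\int\epsilon^{-2}(1-|d_\epsilon|^2)|d_\epsilon-d|^2\eta^2$ is not $o(1)$ even with the (unavailable) Lipschitz bound, since one would still need $\|d_\epsilon-d\|_{L^\infty}$ to decay faster than a positive power of $\epsilon$, which is not known. The paper sidesteps this entirely: it first proves $|d_\epsilon|\ge 1/2$ near $x_0$ (by rescaling to scale $\epsilon$ and a contradiction via the monotonicity formula), writes $d_\epsilon=f_\epsilon\omega_\epsilon$, gets $W^{1,p}$ and $C^{1/2}$ control on $\omega_\epsilon$ through the Hodge/Hardy--BMO machinery, obtains strong $H^1$ convergence of $\omega_\epsilon$ from a $W^{2,2p/(p+2)}$-estimate and compact embedding, and then handles the modulus $f_\epsilon$ and the potential term $\epsilon^{-2}(1-|d_\epsilon|^2)^2$ separately via a Pohozaev-type identity for $f_\epsilon$ together with the explicit interpolation bound $\|1-|d_\epsilon|\|_{L^q}\lesssim\epsilon^{2/q}$, which carries the necessary $\epsilon$-power. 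Your Step~1 (propagating the Morrey smallness by the almost-monotonicity formula) is correct and coincides with the paper's preliminary step, but the remaining two steps would need to be replaced by something like the paper's decomposition argument.
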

\begin{proof} For simplicity, assume $x_0=0\in\Omega$. For any fixed $x_1\in B_{\frac{r_1}2}$ and $0<\epsilon\le \frac{r_1}2$,
define $\widehat{d_\epsilon}(x)=d_\epsilon(x_1+\epsilon x): B_2\to\mathbb R^3$.
Then we have
$$
\Delta \widehat{d_\epsilon}=-(1-|\widehat{d_\epsilon}|^2)\widehat{d_\epsilon}+\widehat{\tau_\epsilon} \ \ {\rm{in}}\ \ B_2,
$$
where $\widehat{\tau_\epsilon}(x)=\epsilon^2\tau_\epsilon(x_1+\epsilon x)$. Since $|\widehat{d_\epsilon}|=|d_\epsilon|\le 1$
in $\Omega$,
it is easy to see that
$$\big\|\Delta\widehat{d_\epsilon}\big\|_{L^2(B_2)}\le \big\|(1-|\widehat{d_\epsilon}|^2)\widehat{d_\epsilon}\big\|_{L^2(B_2)}+\big\|\widehat{\tau_\epsilon}\big\|_{L^2(B_2)}\le C+\epsilon^\frac12\big\|\tau_\epsilon\big\|_{L^2(\Omega)}
\le C+L_2.
$$
Thus $\widehat{d_\epsilon}\in W^{2,2}(B_1)$ and
$$\big\|\widehat{d_\epsilon}\big\|_{W^{2,2}(B_1)}\leq C\Big[\big\|\widehat{d_\epsilon}\big\|_{L^2(B_2)}+\big\|\Delta\widehat{d_\epsilon}\big\|_{L^2(B_2)}\Big]
\le C(1+L_2).
$$
By Sobolev's embedding theorem, we conclude that $\widehat{d_\epsilon}\in C^\frac12(B_1)$ and
$$\Big[\widehat{d_\epsilon}\Big]_{C^\frac12(B_1)}\le C\Big\|\widehat{d_\epsilon}\Big\|_{W^{2,2}(B_1)}\le C(1+L_2).$$
Scaling back to the original scales, this implies that
$$\big|d_\epsilon(x)-d_\epsilon(y)\big|
\le C(1+L_2)\Big(\frac{|x-y|}{\epsilon}\Big)^\frac12, \ \forall\ x, y\in B_\epsilon(x_1).$$
Now we have

\smallskip
\noindent{\bf Claim 4.1}. $|d_\epsilon(x)|\ge \frac12$ for $x\in B_{\frac{r_1}2}$.

Suppose that the claim were false. Then there exists $x_1\in B_{\frac{r_1}2}$ such that
$\displaystyle|d_\epsilon(x_1)|<\frac12.$
Then for any $\theta_0\in (0,1)$ and $x\in B_{\theta_0\epsilon}(x_1)$, it holds
$$|d_\epsilon(x)-d_\epsilon(x_1)|\le  C\Big(\frac{|x-x_1|}{\epsilon}\Big)^\frac12\le C\theta_0^\frac12<\frac14,$$
provided $\theta_0<\frac{1}{16C^2}$. Hence we have
$$|d_\epsilon(x)|\le \frac34, \ \forall\ x\in B_{\theta_0\epsilon}(x_1),$$
so that
\begin{equation}\label{lower_bd}
\frac1{\theta_0\epsilon}\int_{B_{\theta_0\epsilon}(x_1)}\frac{(1-|d_\epsilon|^2)^2}{4\epsilon^2}
\ge \big(\frac{7}{32}\big)^2\frac{\big|B_{\theta_0\epsilon}(x_1)\big|}{\theta_0\epsilon^3}\ge \big(\frac{7}{32}\big)^2\theta_0^2
\big|B_1\big|.
\end{equation}
On the other hand, by the monotonicity inequality (\ref{AMI1}) we have
\begin{eqnarray}\label{upper_bd}
\frac1{\theta_0\epsilon}\int_{B_{\theta_0\epsilon}(x_1)}\frac{(1-|d_\epsilon|^2)^2}{4\epsilon^2}
&\le &\frac{1}{\theta_0\epsilon}\int_{B_{\theta_0\epsilon}(x_1)}e_\epsilon(d_\epsilon)\nonumber\\
&\le & C\frac1{r_1}\int_{B_{r_1}(x_1)}e_\epsilon(d_\epsilon)+C\int_{B_{r_1}(x_1)}|x-x_1||\tau_\epsilon|^2\nonumber\\
&\le& C(\delta_0^2+r_1\big\|\tau_\epsilon\big\|_{L^2(\Omega)}^2)\nonumber\\
&\le& C(\delta_0^2+L_2^2r_1).
\end{eqnarray}
It is clear that (\ref{lower_bd}) contradicts (\ref{upper_bd}), provided
$r_0>0$ and $\delta_0>0$ are chosen to be sufficiently small.
This yields the conclusion of claim 4.1.

Since $|d_\epsilon|\ge \frac12$ in $B_{\frac{r_1}2}$, we can perform the polar decomposition
of $d_\epsilon$ by $d_\epsilon=f_\epsilon\omega_\epsilon$, where
$$f_\epsilon:=|d_\epsilon|:B_{\frac{r_1}2}\to [\frac12, 1]\  \ {\rm{and}} \ \
\omega_\epsilon:=\frac{d_\epsilon}{|d_\epsilon|}: B_{\frac{r_1}2}\to \mathbb S^2.$$
Denote the cross product in $\mathbb R^3$ by $\times$.
It is readily seen that  $$\nabla d_\epsilon =(\nabla f_\epsilon) \omega_\epsilon +f_\epsilon \nabla \omega_\epsilon,
\ \ \ \nabla d_\epsilon\times d_\epsilon=f_\epsilon^2\nabla\omega_\epsilon\times\omega_\epsilon
\ {\rm{in}}\ B_{\frac{r_1}2}.$$
Hence we have that for any subset $U\subset B_{\frac{r_1}2}$, it holds
$$\big\|\nabla f_\epsilon\big\|_{M^{2,2}(U)}+\big\|\nabla \omega_\epsilon\big\|_{M^{2,2}(U)}
\le C\big\|\nabla d_\epsilon\big\|_{M^{2,2}(U)}.
$$
Now we have

\smallskip
\noindent{\bf Claim 4.2}.  For any $2<p<3$, $\nabla\omega_\epsilon\in L^p(B_{\frac{r_1}2})$ and
\begin{equation}\label{lp-bound}
\big\|\nabla \omega_\epsilon\big\|_{L^p(B_{\frac{r_1}2})}
\le C\Big(\big\|\nabla\omega_\epsilon\big\|_{L^{2}(B_{r_1})}+\big\|\tau_\epsilon\big\|_{L^2(B_{r_1})}\Big).
\end{equation}

 Let $d$ and $d^*$ denote exterior derivative and
co-exterior derivative respectively. It follows directly from (\ref{AGL2}) that
\begin{equation}\label{AGL3}
d^*(d d_\epsilon\times d_\epsilon)=\tau_\epsilon\times d_\epsilon\ \ {\rm{in}}\ \ B_{\frac{r_1}2}.
\end{equation}
For any ball $B_r\subset B_{\frac{r_1}4}$, let $\widetilde{f_\epsilon}:\mathbb R^3\to \mathbb R$ and
$\widetilde{\omega}_\epsilon:\mathbb R^3\to \mathbb R^3$ be extensions of $f_\epsilon$ and $\omega_\epsilon
$ in $B_{2r}$  such that
\begin{equation}\label{extension}
\begin{cases}
\widetilde f_\epsilon=f_\epsilon \ {\rm{in}}\ B_{2r},\ \
\frac12\le \widetilde f_\epsilon\le 1 \ {\rm{in}}\ \mathbb R^3,\ \ \big\|\nabla \widetilde f_\epsilon\big\|_{L^2(\mathbb R^3)}
\le C \big\|\nabla f_\epsilon\big\|_{L^2(B_{2r})},\\
\widetilde \omega_\epsilon=\omega_\epsilon \ {\rm{in}}\ B_{2r},\ \ \ |\widetilde\omega_\epsilon|\le 1 \ {\rm{in}}\ \mathbb R^3,
\ \ \big\|\nabla\widetilde{\omega}_\epsilon\big\|_{L^2(\mathbb R^3)}
\le C\big\|\nabla\omega_\epsilon\big\|_{L^2(B_{2r})},\\
\big\|\nabla\widetilde f_\epsilon\big\|_{M^{2,2}(\mathbb R^3)}
\le C\big\|\nabla f_\epsilon\big\|_{M^{2,2}(B_{2r})},\ \ \big\|\nabla\widetilde \omega_\epsilon\big\|_{M^{2,2}(\mathbb R^3)}
\le C\big\|\nabla \omega_\epsilon\big\|_{M^{2,2}(B_{2r})}.
\end{cases}
\end{equation}
Set  $\widetilde d_\epsilon=\widetilde f_\epsilon \widetilde\omega_\epsilon$ in $\mathbb R^3$.
Applying the Hodge decomposition theorem to $d\widetilde d_\epsilon\times \widetilde d_\epsilon=\widetilde{f_\epsilon}^2d\widetilde \omega_\epsilon\times \widetilde \omega_\epsilon$,
we conclude that there exist
$G^\epsilon\in \dot{H}^1(\mathbb R^3, \mathbb M^{3\times 3})$ and $H^\epsilon\in \dot{H}^1(\mathbb R^3,
\wedge^2(\mathbb M^{3\times 3}))$ such that
\begin{equation}\label{hodge}
d\widetilde d_\epsilon\times \widetilde d_\epsilon
=d G^\epsilon+d^*H^\epsilon, \ dH^\epsilon=0 \ \ {\rm{in}}\ \mathbb R^3,
\end{equation}
and
\begin{equation}\label{2-bound}
\big\|\nabla G^\epsilon\big\|_{L^2(\mathbb R^3)}+\big\|\nabla H^\epsilon\big\|_{L^2(\mathbb R^3)}\lesssim
\big\|d\widetilde d_\epsilon\times \widetilde d_\epsilon\big\|_{L^2(\mathbb R^3)}
\lesssim \big\|\nabla\widetilde\omega_\epsilon\big\|_{L^2(\mathbb R^3)}
\lesssim \big\|\nabla \omega_\epsilon\big\|_{L^2(B_{2r})}.
\end{equation}
Taking the exterior derivative of both sides of the equation (\ref{hodge}), we have
\begin{equation}\label{H-equation}
\Delta H^\epsilon=d(d\widetilde d_\epsilon\times \widetilde d_\epsilon)=d\widetilde{d}_\epsilon\times d\widetilde{d}_\epsilon
=\sum_{i,j=1}^3\frac{\partial \widetilde{d_\epsilon}}{\partial x_i}\times
\frac{\partial \widetilde{d_\epsilon}}{\partial x_j} dx_i\wedge dx_j
 \ {\rm{in}}\ \mathbb R^3.
\end{equation}
Multiplying (\ref{H-equation}) by $H^\epsilon$ and applying integration by parts, we have
\begin{eqnarray*}
\int_{\mathbb R^3} |\nabla H^\epsilon|^2=
-\int_{\mathbb R^3}d \widetilde d_\epsilon\times d \widetilde d_\epsilon \cdot H^\epsilon
=-\int_{\mathbb R^3}d\widetilde d_\epsilon\times \widetilde d_\epsilon \cdot d^*H^\epsilon
=-\int_{\mathbb R^3}d\widetilde \omega_\epsilon\times ({\widetilde f_\epsilon}^2\widetilde\omega_\epsilon)
\cdot d^*H^\epsilon.
\end{eqnarray*}
Applying the duality between the Hardy space $\mathcal H^1(\mathbb R^3)$
and the BMO space ${\rm{BMO}}(\mathbb R^3)$ (see \cite{evans} \cite{helein} or \cite{LW4}), we then obtain
\begin{eqnarray*}
\int_{\mathbb R^3} |\nabla H^\epsilon|^2
&\lesssim& \big\|d\widetilde {\omega}_\epsilon\cdot d^*H^\epsilon
\big\|_{\mathcal {H}^1(\mathbb R^3)}
\big\|{\widetilde f_\epsilon}^2\widetilde \omega_\epsilon\big\|_{\rm{BMO}(\mathbb R^3)}\\
&\lesssim& \big\|\nabla \widetilde \omega_\epsilon\big\|_{L^2(\mathbb R^3)}
\big\|\nabla H^\epsilon\big\|_{L^2(\mathbb R^3)}
\big\|\nabla({\widetilde f_\epsilon}^2 \widetilde\omega_\epsilon)\big\|_{M^{2,2}(\mathbb R^3)}\\
&\lesssim& \big\|\nabla \omega_\epsilon\big\|_{L^2(B_{2r})}^2
\Big[\big\|\nabla \widetilde f_\epsilon\big\|_{M^{2,2}(\mathbb R^3)}
+\big\|\nabla \widetilde \omega_\epsilon\big\|_{M^{2,2}(\mathbb R^3)}\Big]\\
&\lesssim& \big\|\nabla \omega_\epsilon\big\|_{L^2(B_{2r})}^2
\Big[\big\|\nabla f_\epsilon\big\|_{M^{2,2}(B_{2r})}
+\big\|\nabla \omega_\epsilon\big\|_{M^{2,2}(B_{2r})}\Big]\\
&\lesssim& \big\|\nabla \omega_\epsilon\big\|_{L^2(B_{2r})}^2
\big\|\nabla  d_\epsilon\big\|_{M^{2,2}(B_{2r})},
\end{eqnarray*}
where we have used the Poincar\'e inequality to estimate
$$\big\|{\widetilde f_\epsilon}^2\widetilde \omega_\epsilon\big\|_{\rm{BMO}(\mathbb R^3)}
\lesssim \big\|\nabla({\widetilde f_\epsilon}^2\widetilde \omega_\epsilon)\big\|_{M^{2,2}(\mathbb R^3)}
\lesssim \Big[\big\|\nabla \widetilde f_\epsilon\big\|_{M^{2,2}(\mathbb R^3)}
+\big\|\nabla \widetilde \omega_\epsilon\big\|_{M^{2,2}(\mathbb R^3)}\Big]
$$
among the first three inequalities.
Utilizing the energy monotonicity inequality (\ref{AMI1}),  we obtain
\begin{eqnarray*}
\big\|\nabla d_\epsilon\big\|_{M^{2,2}(B_{2r})}
\lesssim \Big\{\frac{1}{2r_1}\int_{B_{2r_1}}e_\epsilon(d_\epsilon)+r_1\int_{B_{2r_1}}|\tau_\epsilon|^2 \Big\}^\frac12
\lesssim \delta_0+L_2r_1^\frac12.
\end{eqnarray*}
Thus we have
\begin{equation}\label{H-estimate}
\int_{\mathbb R^3} |\nabla H^\epsilon|^2\le C(\delta_0+L_2r_1^\frac12)\int_{B_{2r}}|\nabla \omega_\epsilon|^2.
\end{equation}
To estimate $G^\epsilon$, first observe that by taking the co-exterior derivative $d^*$ of both sides of
the equation (\ref{hodge}), we have
\begin{equation}\label{hodge1}
\Delta G^\epsilon=\tau_\epsilon\times d_\epsilon \ \ {\rm{in}}\ \ B_{r_1}.
\end{equation}
Decompose $G^\epsilon=G^{(\epsilon,1)}+G^{(\epsilon, 2)}$, where $G^{(\epsilon,1)}\in H^1_0(B_{2r},\mathbb M^{3\times 3})$ solves
$$\begin{cases}
\Delta G^{(\epsilon, 1)}=\tau_\epsilon\times d_\epsilon  &  \ {\rm{in}}\ \ B_{2r}\\
\ \   G^{(\epsilon, 1)}=0 & \ {\rm{on}}\ \ \partial B_{2r},
\end{cases}
$$
and $G^{(\epsilon, 2)}\in H^1(B_{2r},\mathbb M^{3\times 3})$ solves
$$
\begin{cases}
\Delta G^{(\epsilon, 2)}=0 & \ {\rm{in}}\ \ B_{2r}\\
\ \ G^{(\epsilon, 2)}=G^\epsilon & \ {\rm{on}}\ \ \partial B_{2r}.
\end{cases}
$$
By the standard elliptic theory, we have that
$$\int_{B_{2r}}|\nabla G^{(\epsilon, 1)}|^2\lesssim r^2 \int_{B_{2r}}|\tau_\epsilon|^2,$$
and
$$\int_{B_{\theta r}}|\nabla G^{(\epsilon, 2)}|^2\lesssim \theta^3 \int_{B_{2r}}|\nabla G^\epsilon|^2\lesssim
\theta^3\int_{B_{2r}}|\nabla \omega_\epsilon|^2, \ \forall\ 0<\theta<1.$$
Combining these two estimates together yields
\begin{equation}\label{G-estimate}
\int_{B_{\theta r}}|\nabla G^\epsilon|^2\lesssim \theta^3\int_{B_{2r}}|\nabla \omega_\epsilon|^2+
 r^2\int_{B_{2r}}|\tau_\epsilon|^2, \ \forall\ 0<\theta<1.
\end{equation}
Putting (\ref{H-estimate}) and (\ref{G-estimate}) together and using (\ref{hodge}), we obtain that
\begin{equation}\label{decay1}
\frac1{\theta r}\int_{B_{\theta r}}|\nabla \omega_\epsilon|^2\le C\big(\theta^2+\theta^{-1}(\delta_0+L_2 r_1^\frac12)\big)
\frac1{2r}\int_{B_{2r}}|\nabla \omega_\epsilon|^2+C\theta^{-1} r\int_{B_{2r}}|\tau_\epsilon|^2,
\end{equation}
for any $B_{2r}\subset B_{\frac{r_1}2}$ and $0<\theta<1$.

For any $\alpha\in (0,\frac12)$, first choose $\theta=\theta_0\in (0,1)$ such that $C\theta_0^2\le \frac13\theta_0^{2\alpha}$,
 then choose $\delta_0\in (0,1)$ such that $C\delta_0\le  \frac13\theta_0^{2\alpha+1}$, and finally choose
$r_1>0$ such that $CL_2r_1^\frac12\le\frac13\theta_0^{2\alpha+1}$. Thus it follows from (\ref{decay1}) that
\begin{equation}\label{decay2}
\frac{1}{\theta_0 r}\int_{B_{\theta_0 r}}|\nabla \omega_\epsilon|^2\le \theta_0^{2\alpha}
\Big\{\frac{1}{2r}\int_{B_{2r}}|\nabla \omega_\epsilon|^2\Big\}+C_0 \theta_0 r\int_{B_{2r}}|\tau_\epsilon|^2,
\ \forall \ B_{2r}\subset B_{\frac{r_1}2}.
\end{equation}
Iterating (\ref{decay2}) finitely many times yields
\begin{equation}\label{decay3}
\frac{1}{r}\int_{B_r}|\nabla \omega_\epsilon|^2
\le \big(\frac{r}{r_1}\big)^{2\alpha}\Big\{\frac{1}{r_1}\int_{B_{r_1}}|\nabla\omega_\epsilon|^2\Big\}
+C_0 r \int_{B_{r_1}}|\tau_\epsilon|^2, \ \forall \ B_{2r}\subset B_{\frac{r_1}2}.
\end{equation}
Taking supremum over all balls $B_{2r}\subset B_{\frac{r_1}2}$, we obtain that for any $\alpha\in (0, \frac12]$, it holds
\begin{equation}\label{decay4}
\big\|\nabla\omega_\epsilon\big\|_{M^{2,2-2\alpha}(B_{\frac{r_1}2})}^2
\le C(r_1)\Big[\int_{B_{r_1}}|\nabla \omega_\epsilon|^2+\int_{B_{r_1}}|\tau_\epsilon|^2\Big].
\end{equation}
It follows from (\ref{decay4}) that $d(d d_\epsilon\times  d_\epsilon)
=d( f_\epsilon^2 d\omega_\epsilon\times\omega_\epsilon)\in M^{1, 2-\alpha}(B_{\frac{r_1}2})$ and
\begin{eqnarray}\label{morrey_est1}
\big\|d(d d_\epsilon\times  d_\epsilon)\big\|_{M^{1,2-\alpha}(B_{\frac{r_1}2})}
&\lesssim& \big\|\nabla \omega_\epsilon\big\|_{M^{2,2-2\alpha}( B_{\frac{r_1}2})}^2
+\big\|\nabla f_\epsilon\big\|_{M^{2,2}( B_{\frac{r_1}2})}
 \big\|\nabla \omega_\epsilon\big\|_{M^{2,2-2\alpha}(B_{\frac{r_1}2})}\nonumber\\
&\le& C(r_1)\Big[\int_{B_{r_1}}|\nabla \omega_\epsilon|^2+\int_{B_{r_1}}|\tau_\epsilon|^2\Big].
\end{eqnarray}
Based on (\ref{morrey_est1}), we can repeat both the extension and the Hodge decomposition as in (\ref{extension}),
(\ref{hodge}), (\ref{2-bound}), and (\ref{H-equation}). Since
$$
H^\epsilon(x)=\int_{\mathbb R^3} \frac{1}{|x-y|}d(d\widetilde d_\epsilon\times \widetilde d_\epsilon)(y)\,dy, \forall x\in\mathbb R^3,$$
we have
$$|\nabla H^\epsilon(x)|\lesssim \int_{\mathbb R^3} \frac{|d(d\widetilde d_\epsilon\times \widetilde d_\epsilon)|(y)}
{|x-y|^2}\,dy=I_1(|d(d\widetilde d_\epsilon\times \widetilde d_\epsilon)|)(x), \ \forall \ x\in\mathbb R^3,$$
where
$$\displaystyle I_1(f)(x):=\int_{\mathbb R^3} \frac{|f(y)|}{|x-y|^2}\,dy,  \ f\in L^1_{\rm{loc}}(\mathbb R^3),$$
is the Riesz potential of $f$ of order $1$.

Since we can construct the extension $\widetilde d_\epsilon$ of $d_\epsilon$ such that
$$\big\|d(d \widetilde d_\epsilon\times \widetilde d_\epsilon)\big\|_{M^{1,2-\alpha}(\mathbb R^3)}
\le C\big\|d(d d_\epsilon\times  d_\epsilon)\big\|_{M^{1,2-\alpha}(B_{\frac{r_1}2})},$$
we can apply Morrey space estimates of Riesz potentials (see \cite{adams}) to conclude that
$\nabla H^\epsilon\in M^{\frac{2-\alpha}{1-\alpha}, 2-\alpha}_*(\mathbb R^3)$ and
\begin{eqnarray}\label{morrey_est2}
\Big\|\nabla H^\epsilon\Big\|_{M^{\frac{2-\alpha}{1-\alpha}, 2-\alpha}_*(\mathbb R^3)}
&\lesssim& \Big\|d(d \widetilde d_\epsilon\times \widetilde d_\epsilon)\Big\|_{M^{1,2-\alpha}(\mathbb R^3)}\nonumber\\
&\lesssim& C(r_1)\Big[\int_{B_{r_1}}|\nabla \omega_\epsilon|^2+\int_{B_{r_1}}|\tau_\epsilon|^2\Big].
\end{eqnarray}
Since $\displaystyle \lim_{\alpha\uparrow \frac12}\frac{2-\alpha}{1-\alpha}=3$, it follows from
(\ref{morrey_est2}) that $\nabla H^\epsilon\in L^p(B_{\frac{r_1}2})$ for any $2<p<3$, and
\begin{equation}
\label{H-estimate1}
\Big\|\nabla H^\epsilon\Big\|_{L^p(B_{\frac{r_1}2})}\le C\Big\|\nabla H^\epsilon\Big\|_{M^{\frac{2-\alpha}{1-\alpha}, 2-\alpha}_*(\mathbb R^3)}\le  C(r_1)\Big[\int_{B_{r_1}}|\nabla \omega_\epsilon|^2+\int_{B_{r_1}}|\tau_\epsilon|^2\Big].
\end{equation}
On the other hand, applying $W^{2,2}$-estimate of the equation (\ref{hodge1}) we conclude
that $G^\epsilon\in W^{2,2}(B_{\frac{r_1}2})$, and
\begin{equation}\label{G-estimate1}
\big\|\nabla G^\epsilon\big\|_{L^6(B_{\frac{r_1}2})}\lesssim\big\|\nabla G^\epsilon\big\|_{H^1(B_{\frac{r_1}2})}
\lesssim \big\|\nabla G^\epsilon\big\|_{L^2(B_{r_1})}
+\big\|\tau_\epsilon\big\|_{L^2(B_{r_1})}\le C(r_1)
\Big\{\int_{B_{r_1}}(|\nabla d_\epsilon|^2+|\tau_\epsilon|^2)\Big\}^\frac12.
\end{equation}
Combining (\ref{H-estimate1}) and (\ref{G-estimate1}) yields that $\nabla \omega_\epsilon\in L^p(B_{\frac{r_1}2})$
for any $2<p<3$, and the estimate (\ref{lp-bound}) holds.

\smallskip
\noindent{\bf Claim 4.3}. There is a map $\omega\in H^1(B_{\frac{r_1}2},\mathbb S^2)$ such that after taking possible subsequences, $\omega_\epsilon\rightarrow \omega$ in $H^1(B_{\frac{2r_1}5})$ as $\epsilon\rightarrow 0$.

After passing to possible subsequences, we may assume that $f_\epsilon\rightharpoonup
1$ in $H^1(B_{\frac{r_1}2})$ and   there exists $\omega\in H^1(B_{\frac{r_1}2},\mathbb S^2)$
such that $\omega_\epsilon\rightharpoonup\omega$ in $H^1(B_{\frac{r_1}2})$. We may also assume that
$\tau_\epsilon\rightharpoonup\tau$ in $L^2(\Omega)$ for some $\tau\in L^2(\Omega,\mathbb R^3)$.
We also recall that (\ref{decay4}) and Morrey's decay lemma (see \cite{morrey}) imply
$\omega_\epsilon\in C^\frac12(B_{\frac{r_1}2})$ and
\begin{equation}\label{holder_est}
\big[\omega_\epsilon\big]_{C^\frac12(B_{\frac{r_1}2})}\le
C(r_1)\Big(\|\nabla\omega_\epsilon\|_{L^2(B_{r_1})}+\|\tau_\epsilon\|_{L^2(B_{r_1})}\Big)\le C.
\end{equation}
Hence we can assume that
\begin{equation}\label{uniform_conv}
\lim_{\epsilon\rightarrow 0}\big\|\omega_\epsilon-\omega\big\|_{L^\infty(B_{\frac{r_1}2})}=0.
\end{equation}
It follows from Claim 4.2 and (\ref{lp-bound}) that for any $2<p<3$,
\begin{equation}\label{lp_bound1}
\|\nabla\omega_\epsilon\|_{L^p(B_{\frac{r_1}2})}\le C.
\end{equation}
Direct calculations imply that $\omega_\epsilon$ satisfies the equation
\begin{equation}\label{omega-eqn1}
\Delta\omega_\epsilon=\widetilde{\tau_\epsilon}:=-|\nabla\omega_\epsilon|^2\omega_\epsilon-2f_\epsilon^{-1}\nabla f_\epsilon\cdot\nabla\omega_\epsilon
+(\tau_\epsilon-\langle\tau_\epsilon, \omega_\epsilon\rangle\omega_\epsilon),
\ {\rm{in}}\ B_{\frac{r_1}2}.
\end{equation}
Since $f_\epsilon\ge \frac12$ in $B_{\frac{r_1}2}$, we have
$$\big\|\widetilde{\tau_\epsilon}\big\|_{L^{\frac{2p}{p+2}}(B_{\frac{r_1}2})}
\le C(r_1)\Big[\|\tau_\epsilon\|_{L^2(B_{\frac{r_1}2})}+\|\nabla f_\epsilon\|_{L^2(B_{\frac{r_1}2})}\|\nabla\omega_\epsilon\|_{L^p(B_{\frac{r_1}2})}\Big]\le C(r_1, p, L_1, L_2).$$
It follows from the $W^{2,\frac{2p}{p+2}}$-theory that $\omega_\epsilon\in W^{2,\frac{2p}{p+2}}(B_{\frac{2r_1}5})$
and
\begin{equation}\label{2p_bound}
\big\|\omega_\epsilon\big\|_{W^{2,\frac{2p}{p+2}}(B_{\frac{2r_1}5})}
\le C\Big(\|\nabla\omega_\epsilon\|_{L^2(B_{\frac{r_1}2})}+\big\|\widetilde{\tau_\epsilon}\big\|_{L^{\frac{2p}{p+2}}(B_{\frac{r_1}2})}
\Big)
\le C(r_1,p, L_1,L_2).
\end{equation}
It follows from (\ref{lp_bound1}), (\ref{2p_bound}), and the compact embedding
of $W^{2,\frac{2p}{p+2}}\subset W^{1,\frac{2p}{p+2}}$ that
$\nabla\omega_\epsilon\rightarrow \nabla\omega$
in $L^2(B_{\frac{2r_1}5})$.

\smallskip
\noindent{\bf Claim 4.4}. After passing to possible subsequences, $f_\epsilon\rightarrow 1$ in $H^1(B_{\frac{r_1}4})$.

To see this, we need to estimate $\displaystyle\int_{B_{\frac{r_1}4}}|\nabla f_\epsilon|^2$. First,
observe that $f_\epsilon$ satisfies
\begin{equation}\label{g-equation}
\Delta (1-f_\epsilon)-\frac{1}{\epsilon^2}(1-f_\epsilon^2)f_\epsilon
=-|\nabla\omega_\epsilon|^2 f_\epsilon-\tau_\epsilon\cdot\omega_\epsilon, \ \ {\rm{in}}\ \ B_{\frac{r_1}2}.
\end{equation}
By Fubini's theorem, there exists $r_2\in (\frac{r_1}4, \frac{2r_1}5)$ such that
\begin{equation}\label{fubini1}
\int_{\partial B_{r_2}}e_\epsilon(d_\epsilon)\,dH^2
\le \frac8{r_1}\int_{B_{\frac{r_1}2}}e_\epsilon(d_\epsilon).
\end{equation}
Since $|d_\epsilon|\le 1$ in $B_{r_1}$, it is readily seen that for any $2<q<+\infty$,
\begin{equation}\label{lq-bound}
\big\|1-|d_\epsilon|\big\|_{L^q(B_{r_1})}\le \big\|1-|d_\epsilon|\big\|_{L^2(B_{r_1})}^{\frac{2}{q}}
\big\|1-|d_\epsilon|\big\|_{L^\infty(B_{r_1})}^{\frac{q-2}q}
\le C \epsilon^{\frac{2}q}\Big(\int_{B_{r_1}}e_\epsilon(d_\epsilon)\Big)^{\frac{1}{q}}\le C\epsilon^{\frac{2}q}.
\end{equation}
Multiplying (\ref{g-equation}) by $(1-f_\epsilon)$ and  integrating the resulting equation over $B_{r_2}$ and
using $\frac12\le |f_\epsilon|\le 1$ and H\"older's inequality, we obtain
\begin{eqnarray}
&&\int_{B_{r_2}}|\nabla f_\epsilon|^2
+\int_{B_{r_2}}\frac{1}{\epsilon^2}(1-f_\epsilon)^2f_\epsilon(1+f_\epsilon)\nonumber\\
&&=\int_{\partial B_{r_2}}(1-f_\epsilon)\frac{\partial f_\epsilon}{\partial r}\,dH^2
+\int_{B_{r_2}}|\nabla\omega_\epsilon|^2f_\epsilon(1-f_\epsilon)
+\int_{B_{r_2}}\tau_\epsilon\cdot\omega_\epsilon(1-f_\epsilon)\nonumber\\
&&\le \int_{\partial B_{r_2}}(1-|d_\epsilon|)\big|\frac{\partial d_\epsilon}{\partial r}\big|\,dH^2
+\int_{B_{r_2}}|\nabla\omega_\epsilon|^2(1-|d_\epsilon|)+\int_{B_{r_2}}|\tau_\epsilon|(1-|d_\epsilon|)\nonumber\\
&&\lesssim \epsilon^2\frac1{r_1}\int_{B_{\frac{r_1}2}}e_\epsilon(d_\epsilon)
+\big\|\nabla\omega_\epsilon\big\|_{L^p(B_{\frac{r_1}2})}^2\big\|1-|d_\epsilon|\big\|_{L^q(B_{\frac{r_1}2})}^{\frac{1}{q}}
+\epsilon\big\|\tau_\epsilon\big\|_{L^2(B_{\frac{r_1}2})}\Big(\int_{B_{\frac{r_1}2}}e_\epsilon(d_\epsilon)\Big)^\frac12,
\label{decay6}
\end{eqnarray}
where $p\in (2,3)$ and $q=\frac{p}{p-2}\in (3,+\infty)$.
Hence, by (\ref{lp-bound}) and (\ref{lq-bound}) we have
\begin{equation}\label{l2-strong-conv}
\int_{B_{\frac{r_1}4}}\big(|\nabla f_\epsilon|^2+\frac{(1-|d_\epsilon|^2)^2}{4\epsilon^2}\big)
\le C\epsilon^{\frac{1}{q}} \rightarrow 0, \ {\rm{as}}\ \epsilon\rightarrow 0.
\end{equation}
Combining claim 4.3 with claim 4.4, we see that $d_\epsilon=f_\epsilon\omega_\epsilon\rightarrow d=\omega$ in $H^1(B_{\frac{r_1}4})$.
The proof is now complete.
\end{proof}

\medskip
\section{$\delta_0$-regularity for suitable approximated harmonic map to $\mathbb S^2$}

In this section, we will introduce the notion of suitable approximated harmonic maps to $\mathbb S^2$
with $L^2$-tension fields. Then we will establish the sequential compactness property for such approximated harmonic maps
under the energy smallness condition.

Recall that a map $d\in H^1(\Omega,\mathbb S^2)$ is called an approximated harmonic map with $L^2$-tension field,
if there exists $\tau\in L^2(\Omega, \mathbb R^3)$ such that
\begin{equation}\label{AHM1}
\Delta d+|\nabla d|^2 d=\tau \ \ {\rm{in}}\ \ \Omega,
\end{equation}
holds in the sense of distributions.

\begin{defn} \label{suitable} An approximated harmonic map $d\in H^1(\Omega, \mathbb S^2)$, with tension field $\tau\in L^2(\Omega,\mathbb R^3)$, is called a suitable approximated harmonic map, if
\begin{equation}\label{suitable1}
\frac{d}{dt}\Big|_{t=0}\int_\Omega \big(\frac12 |\nabla(d\circ F_t)|^2+\langle \tau, d\circ F_t\rangle\big)=0
\end{equation}
holds for any $F_t(x)=x+tY(x)$, where $Y=(Y^1,Y^2,Y^3)\in C_0^1(\Omega, \mathbb R^3)$.
\end{defn}

Direct calculations imply that (\ref{suitable1}) is equivalent to
\begin{equation}\label{suitable2}
\int_\Omega \Big(\big\langle\frac{\partial d}{\partial x_i}, \frac{\partial d}{\partial x_j}\big\rangle \frac{\partial Y^i}{\partial x_j}
-\frac{|\nabla d|^2}2 {\rm{div}}Y+\big\langle\tau, Y\cdot\nabla d\big\rangle\Big)=0,
\ \forall \ Y\in C_0^\infty(\Omega,\mathbb R^3).
\end{equation}
\begin{rem} {\rm An approximated harmonic map $d\in H^1(\Omega,\mathbb S^2)$, with $L^2$-tension field $\tau$,
is a suitable approximated harmonic map, if $d\in W^{2,2}(\Omega, \mathbb S^2)$. In fact, (\ref{suitable2})
can be obtained by the Pohozaev argument, namely multiplying (\ref{AHM1}) by $Y\cdot\nabla d$ and integrating
the resulting equation over $\Omega$.}
\end{rem}

For suitable approximated harmonic maps, we have the following energy monotonicity inequality.
\begin{lemma}\label{HMI1} Assume $d\in H^1(\Omega,\mathbb S^2)$ is a suitable approximated
harmonic map with tension field $\tau\in L^2(\Omega,\mathbb R^3)$. Then
\begin{equation}\label{HMI2}
\Psi_R(d; x_0)\ge \Psi_r(d; x_0)+\frac12\int_{B_R(x_0)\setminus
B_r(x_0)}|x-x_0|^{-1}\big|\frac{\partial d}{\partial |x-x_0|}\big|^2,
\end{equation}
for $x_0\in \Omega$ and $0<r\le R<{\rm{d}}(x_0,\partial\Omega)$, where
$$\Psi_r(d; x_0)
:=\frac1{r}\int_{B_r(x_0)}\big(\frac12|\nabla d|^2
-\langle (x-x_0)\cdot \nabla d, \tau\rangle\big)
+\frac12\int_{B_r(x_0)} |x-x_0||\tau|^2.$$
\end{lemma}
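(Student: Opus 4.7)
The argument parallels that of Lemma \ref{AGL0}, with the Pohozaev identity for the PDE replaced by the suitability condition (\ref{suitable2}), which is precisely designed to serve as a distributional Pohozaev identity even when $d$ lacks the $W^{2,2}$-regularity needed to multiply (\ref{AHM1}) by $(x-x_0)\cdot\nabla d$. Without loss of generality take $x_0=0$.

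The first step is to produce, for a.e.\ $\rho\in(0,R)$, the identity
\begin{equation}\label{pohozaev_HM}
\frac{1}{2}\int_{B_\rho}|\nabla d|^2-\int_{B_\rho}\langle\tau,\,x\cdot\nabla d\rangle
=\rho\int_{\partial B_\rho}\Bigl(\frac{1}{2}|\nabla d|^2-\Bigl|\frac{\partial d}{\partial|x|}\Bigr|^2\Bigr).
\end{equation}
To get this, I would plug $Y(x)=\phi_\eta(|x|)\,x$ into (\ref{suitable2}), where $\phi_\eta\in C_0^\infty([0,\infty))$ is a smooth approximation of $\chi_{[0,\rho]}$ with $\phi_\eta'\to -\delta_\rho$; then $\partial_jY^i=\phi_\eta\delta_{ij}+x^i\phi_\eta'(|x|)x_j/|x|$, so that $\langle\partial_id,\partial_jd\rangle\partial_jY^i$ and $\tfrac12|\nabla d|^2\mathrm{div}\,Y$ produce interior terms converging to the left-hand side of (\ref{pohozaev_HM}) and boundary terms $-\rho\int_{\partial B_\rho}|\partial_rd|^2+\tfrac{\rho}{2}\int_{\partial B_\rho}|\nabla d|^2$ by the coarea formula. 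Fubini guarantees these boundary traces are finite for a.e.\ $\rho$.

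Next, using (\ref{pohozaev_HM}) I compute
\begin{equation*}
\frac{d}{d\rho}\Bigl[\frac{1}{\rho}\int_{B_\rho}\bigl(\tfrac12|\nabla d|^2-\langle x\cdot\nabla d,\tau\rangle\bigr)\Bigr]
=\frac{1}{\rho}\int_{\partial B_\rho}\Bigl|\frac{\partial d}{\partial|x|}\Bigr|^2-\int_{\partial B_\rho}\Bigl\langle\frac{\partial d}{\partial|x|},\tau\Bigr\rangle,
\end{equation*}
after canceling the $\tfrac12|\nabla d|^2$ boundary terms using (\ref{pohozaev_HM}). Adding the contribution $\tfrac{\rho}{2}\int_{\partial B_\rho}|\tau|^2$ from differentiating the $\tfrac12\int_{B_\rho}|x||\tau|^2$ correction, and estimating the cross term by Young's inequality $\bigl|\langle\partial_rd,\tau\rangle\bigr|\le\tfrac{1}{2\rho}|\partial_rd|^2+\tfrac{\rho}{2}|\tau|^2$, the $\rho|\tau|^2$ pieces cancel and I am left with
\begin{equation*}
\frac{d}{d\rho}\Psi_\rho(d;0)\ge\frac{1}{2\rho}\int_{\partial B_\rho}\Bigl|\frac{\partial d}{\partial|x|}\Bigr|^2.
\end{equation*}
Integrating this differential inequality on $[r,R]$ and using the coarea formula yields (\ref{HMI2}).

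The only delicate point is the approximation argument in step one: one must verify that the boundary traces of $|\nabla d|^2$ and $|\partial_rd|^2$ on $\partial B_\rho$ are integrable for a.e.\ $\rho$ (Fubini), and that the $L^2$-tension $\tau$ makes $\int_{B_\rho}\langle\tau,x\cdot\nabla d\rangle$ well-defined and continuous in $\rho$. Both are straightforward since $d\in H^1$ and $\tau\in L^2$, so the identity (\ref{pohozaev_HM}) is rigorous for a.e.\ $\rho$, which is all the monotonicity formula requires.
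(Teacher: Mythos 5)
Your proof is correct and follows essentially the same route as the paper's: both substitute the radial cut-off vector field $Y(x)=\eta(|x|)x$ (your $\phi_\eta$) into the stationarity identity (\ref{suitable2}), send the cut-off parameter to a delta on $\partial B_\rho$ to obtain the Pohozaev-type identity (\ref{pohozaev_HM}), and then differentiate the rescaled quantity in $\rho$ and apply Young's inequality on the cross term so the $\rho|\tau|^2$ contributions cancel. The remarks you add about a.e.-$\rho$ boundary traces and Fubini are implicit in the paper's argument but do not change its substance.
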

\begin{proof} Assume $x_0=0$. For $0<r<{\rm{d}}(0,\partial\Omega)$ and $0<\epsilon<r$,
let $\eta_\epsilon(x)=\eta(|x|)\in C_0^\infty(B_r)$ be such that $0\le \eta_\epsilon\le 1$,
$\eta_\epsilon\equiv 1 $ in $B_{(1-\epsilon)r}$, and $\eta\equiv 0$ outside $B_r$. Substituting
$Y(x)=\eta_\epsilon(x) x$ into (\ref{suitable2}) and then sending $\epsilon$ to zero, we obtain
$$-\int_{B_r} \frac12|\nabla d|^2+\int_{B_r}\langle\tau, x\cdot\nabla d\rangle
-r\int_{\partial B_r}\big|\frac{\partial d}{\partial r}\big|^2+r\int_{\partial B_r}\frac12|\nabla d|^2=0.$$
This implies
\begin{eqnarray*}
&&\frac{d}{d\rho}\Big(\frac1{r}\int_{B_r}(\frac12|\nabla d|^2-\langle \tau, x\cdot\nabla d\rangle)\Big)\\
&=&\frac1{r^2}\Big[r\int_{\partial B_r}\frac12|\nabla d|^2-\int_{B_r}\frac12|\nabla d|^2
+\int_{B_r}\langle \tau, x\cdot\nabla d\rangle\Big]
-\frac1{r}\int_{\partial B_r}\langle \tau, x\cdot\nabla d\rangle
\\
&=&\frac1{r}\int_{\partial B_r}\big|\frac{\partial d}{\partial |x|}\big|^2
-\frac1{r}\int_{\partial B_r}\langle \tau, x\cdot\nabla d\rangle\ge
\frac1{2r}\int_{\partial B_r}\big|\frac{\partial d}{\partial |x|}\big|^2-\frac12 r\int_{\partial B_r}|\tau|^2.
\end{eqnarray*}
Integrating this inequality over $[r,R]$ implies (\ref{HMI2}).
\end{proof}

With the monotonicity inequality (\ref{HMI2}), we have the following small energy regularity result.
\begin{lemma}\label{small_holder1} For any $L_1, L_2>0$, there exist $\delta_0>0$, $r_0>0$  such that
if $d\in H^1(\Omega, \mathbb S^2)$ is a suitable approximated harmonic map with tension field $\tau$, that satisfies
\begin{equation}
E(d):=\frac12\int_\Omega |\nabla d|^2\le L_1, \ \big\|\tau\big\|_{L^2(\Omega)}\le L_2,
\end{equation}
and
\begin{equation}\label{small1}
\frac1{r_1}\int_{B_{r_1}(x_0)}|\nabla d|^2\le\delta_0^2,
\end{equation}
for some $x_0\in \Omega$ and $0<r_1\le \frac12\min\big\{r_0, {\rm{d}}(x_0,\partial\Omega)\big\}$, then
$d\in C^\frac12\cap W^{2,2}(B_{\frac{r_1}2}, \mathbb S^2)$, and
\begin{equation}\label{small_holder2}
\big[d\big]_{C^\frac12(B_{\frac{r_1}2})}+\big\|d\big\|_{W^{2,2}(B_{\frac{r_1}8})}
\le C(r_1, \delta_0, L_1, L_2).
\end{equation}
\end{lemma}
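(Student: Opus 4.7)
The plan is to adapt the Hodge decomposition strategy from the proof of Lemma \ref{small_holder} (in particular Claim 4.2 and the subsequent Morrey iteration), now applied directly at the level of suitable approximated harmonic maps. A substantial simplification is available: because $d$ already takes values in $\mathbb{S}^2$ pointwise, the polar decomposition $d_\epsilon = f_\epsilon\omega_\epsilon$ and the associated lower bound argument (Claim 4.1) are not needed, and all estimates can be performed on $d$ itself. The first step is to rewrite the equation (\ref{AHM1}) in divergence form: taking the cross product with $d$ and using $|d|=1$ gives
$$d^*(dd \times d) = \tau \times d \quad \text{in}\ \Omega.$$

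Next, I fix any ball $B_{2r}\subset B_{r_1/2}(x_0)$, extend $d$ to $\widetilde d$ on $\mathbb{R}^3$ preserving $L^\infty$, $\dot H^1$ and $M^{2,2}$-gradient norms (as in (\ref{extension})), and apply the Hodge decomposition on $\mathbb{R}^3$ to $d\widetilde d\times\widetilde d$ to obtain $G\in\dot H^1(\mathbb{R}^3)$ and $H\in\dot H^1(\mathbb{R}^3,\wedge^2)$ with $d\widetilde d\times\widetilde d = dG + d^* H$, $dH=0$, and the $L^2$ bound analogous to (\ref{2-bound}). Taking $d$ of this decomposition yields $\Delta H = d\widetilde d\times d\widetilde d$, from which Hardy--BMO duality gives
$$\int_{\mathbb{R}^3}|\nabla H|^2 \lesssim \|\nabla\widetilde d\|_{L^2}^2 \,\|\widetilde d\|_{\mathrm{BMO}} \lesssim \|\nabla d\|_{L^2(B_{2r})}^2\,\|\nabla d\|_{M^{2,2}(B_{2r})}.$$
By the monotonicity inequality (\ref{HMI2}), $\|\nabla d\|_{M^{2,2}(B_{2r})} \lesssim \delta_0 + L_2 r_1^{1/2}$, so the $H$-piece is small. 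For $G$, taking $d^*$ of the decomposition inside $B_{2r}$ gives $\Delta G = \tau\times d$; decomposing $G = G^{(1)}+G^{(2)}$ exactly as in (\ref{G-estimate}) yields the Dirichlet bound $\int_{B_{2r}}|\nabla G^{(1)}|^2 \lesssim r^2\int_{B_{2r}}|\tau|^2$ and the harmonic decay $\int_{B_{\theta r}}|\nabla G^{(2)}|^2 \lesssim \theta^3\int_{B_{2r}}|\nabla d|^2$.

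Combining these produces a Morrey decay inequality for $|\nabla d|^2$ identical in form to (\ref{decay1}). Choosing $\theta_0$ small, then $\delta_0$ small, then $r_1$ small (as in the passage from (\ref{decay1}) to (\ref{decay2}) and (\ref{decay3})) and iterating, one obtains
$$\|\nabla d\|_{M^{2,2-2\alpha}(B_{r_1/2})}^2 \le C(r_1, L_1, L_2), \quad \forall\ \alpha\in(0,\tfrac12).$$
Morrey's Dirichlet-growth theorem (see \cite{morrey}) then gives $d\in C^{1/2}(B_{r_1/2}, \mathbb{S}^2)$ together with the first half of the estimate (\ref{small_holder2}). For the $W^{2,2}$ bound on the smaller ball $B_{r_1/8}$, I would argue as in (\ref{morrey_est1})--(\ref{H-estimate1}): the Morrey bound propagates to $d(dd\times d)\in M^{1,2-\alpha}$, Riesz-potential estimates in Morrey spaces yield $\nabla d\in L^p_{\mathrm{loc}}$ for every $p<3$, whence $|\nabla d|^2 d\in L^{p/2}$; then the equation $\Delta d = -|\nabla d|^2 d + \tau$ combined with $W^{2,q}$ regularity and a single Sobolev bootstrap upgrades to $d\in W^{2,2}(B_{r_1/8})$.

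The main obstacle is the Hardy--BMO estimate of $H$: one needs the $\mathrm{BMO}$-norm of the extension $\widetilde d$ to be uniformly small, and this smallness has to come simultaneously from the normalized-energy assumption (\ref{small1}) (giving the $\delta_0$ contribution) and from the $L^2$-control of the tension field $\tau$ (giving the $L_2 r_1^{1/2}$ contribution); this is precisely the information packaged by the monotonicity inequality (\ref{HMI2}), which is why the smallness threshold must be imposed on the \emph{scale-invariant} quantity $r_1^{-1}\int_{B_{r_1}}|\nabla d|^2$ rather than on the total energy $E(d)$. Once this step is in place, the Morrey iteration and the bootstrap to $W^{2,2}$ proceed in a routine fashion.
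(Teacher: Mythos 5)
Your first half (the Morrey decay and the $C^{1/2}$ estimate) is essentially the paper's proof verbatim: the paper also rewrites the equation in divergence form as ${\rm div}(\nabla d\times d)=\tau\times d$, and then, in its own words, obtains (\ref{hm_decay1})--(\ref{hm_decay3}) ``by repeating the argument of the claim 4.2 lines by lines,'' followed by Morrey's lemma. That part is fine, and you correctly identified that the polar decomposition and Claim 4.1 are unnecessary here.

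The $W^{2,2}$ step, however, has a gap. You assert that the Morrey bound propagates to $d(d d\times d)\in M^{1,2-\alpha}$ and therefore $\nabla d\in L^p_{\rm loc}$ for every $p<3$. The exponent $2-\alpha$ is the one from Claim 4.2, but there it arises from the cross term $\nabla\widetilde f_\epsilon\cdot\nabla\widetilde\omega_\epsilon$ in $d(d\widetilde d_\epsilon\times\widetilde d_\epsilon)$, with only $M^{2,2}$ control on $\nabla f_\epsilon$. For a genuine $\mathbb S^2$-valued $d$ there is no modulus factor: $d(dd\times d)=\sum_{i,j}\partial_i d\times\partial_j d\,dx_i\wedge dx_j$ is bounded pointwise by $|\nabla d|^2$, which lies in $M^{1,2-2\alpha}$, a strictly better class. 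Copying the $M^{1,2-\alpha}$ estimate throws away information. This matters because with $p<3$ your proposed ``single Sobolev bootstrap'' does not close: $\nabla d\in L^{p}$ with $p<3$ gives $|\nabla d|^2 d\in L^{p/2}$ with $p/2<3/2$, hence $\Delta d\in L^{3/2-\eta}$, hence $d\in W^{2,3/2-\eta}$, and Sobolev embedding returns $\nabla d\in L^{3-\eta'}$ --- no gain, and in particular nowhere near the $\nabla d\in L^4$ needed for $\Delta d\in L^2$. To make your Hodge route work you must use the sharp $M^{1,2-2\alpha}$ bound, which via the Riesz potential gives $\nabla H\in M_*^{\frac{2-2\alpha}{1-2\alpha},2-2\alpha}$, hence $\nabla H\in L^q$ for every $q<\infty$; combined with $\nabla G\in L^6$ from $W^{2,2}$ theory of $\Delta G=\tau\times d$ and the identity $\nabla d=-(\nabla d\times d)\times d$, this yields $\nabla d\in L^q$ for every $q<6$, in particular $\nabla d\in L^4$, which does give $\Delta d\in L^2$ and closes.

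For comparison, the paper sidesteps this by abandoning the Hodge decomposition at this stage and instead writes $d=d_1+d_2+d_3$ on a smaller ball, where $d_1$ and $d_2$ are Newtonian potentials of $\eta^2|\nabla d|^2 d$ and $\eta^2\tau$ and $d_3$ is harmonic. Riesz-potential estimates applied directly to $|\nabla d|^2 d\in M^{1,2-2\alpha}$ give $\nabla d_1\in L^q$ for every $q<\infty$ (since $\frac{2-2\alpha}{1-2\alpha}\to\infty$), while $\nabla d_2\in L^6$ and $\nabla d_3\in L^4$ trivially; this gives $\nabla d\in L^4$ without any intermediate bootstrap. Both approaches work once the correct exponent is used; the paper's is slightly cleaner because the $G$-piece and the cross-product structure drop out.
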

\begin{proof} Since the proof is similar to that of Lemma \ref{small_holder}, we only sketch it here.
First, multiplying both sides of (\ref{AHM1}) by $\times d$ yields that $d$ satisfies
\begin{equation}\label{AHM2}
{\rm{div}}(\nabla d\times d)=\tau\times d \ \ \ {\rm{in}}\ \ \ \Omega.
\end{equation}
Then by repeating the argument of the claim 4.2 lines by lines, we can obtain that
for any $\alpha\in (0,\frac12)$, there exists $\theta_0\in (0,1)$ such that
\begin{equation}\label{hm_decay1}
\frac{1}{\theta_0 r}\int_{B_{\theta_0 r}}|\nabla d|^2\le \theta_0^{2\alpha}
\Big\{\frac{1}{2r}\int_{B_{2r}}|\nabla d|^2\Big\}+C_0 \theta_0 r\int_{B_{2r}}|\tau|^2,
\ \forall \ B_{2r}\subset B_{\frac{r_1}2}.
\end{equation}
Iterating (\ref{hm_decay1}) finitely many times, we obtain
\begin{equation}\label{hm_decay2}
\frac{1}{r}\int_{B_r}|\nabla d|^2
\le \big(\frac{r}{r_1}\big)^{2\alpha}\Big\{\frac{1}{r_1}\int_{B_{r_1}}|\nabla d|^2\Big\}
+C_0 r \int_{B_{r_1}}|\tau|^2, \ \forall \ B_{2r}\subset B_{\frac{r_1}2}.
\end{equation}
Taking supremum over all balls $B_{2r}\subset B_{\frac{r_1}2}$, we obtain that for any $\alpha\in (0, \frac12]$, it holds
\begin{equation}\label{hm_decay3}
\big\|\nabla d\big\|_{M^{2,2-2\alpha}(B_{\frac{r_1}2})}^2
\le C(r_1)\Big[\int_{B_{r_1}}|\nabla d|^2+\int_{B_{r_1}}|\tau|^2\Big].
\end{equation}
This, combined with Morrey's lemma (see \cite{morrey}), implies that
$d\in C^\frac12(B_{\frac{r_1}2})$ and
\begin{equation}\label{holder_est}
\big[d\big]_{C^\frac12(B_{\frac{r_1}2})}\le
C(r_1)\Big(\|\nabla d\|_{L^2(B_{r_1})}+\|\tau\|_{L^2(B_{r_1})}\Big)\le C(r_1, \delta_0, L_1, L_2).
\end{equation}
To see the interior $W^{2,2}$-regularity of $d$, we proceed as follows. Let $\eta\in C_0^\infty(B_{\frac{r_1}2})$
be a cut-off function of $B_{\frac{r_1}4}$, i.e., $0\le\eta\le 1$, $\eta\equiv 1$ in $B_{\frac{r_1}4}$, and $|\nabla\eta|
\le \frac{8}{r_1}$. Define $d_1, d_2:\mathbb R^3\to\mathbb R^3$ by
$$d_1(x)=\int_{\mathbb R^3}\frac{(\eta^2|\nabla d|^2 d)(y)}{|x-y|}\,dy,\
d_2(x)=\int_{\mathbb R^3}\frac{(\eta^2\tau)(y)}{|x-y|}\,dy,$$
and $d_3: B_{\frac{r_1}2}\to\mathbb R^3$ by
$$d_3=d-d_1-d_2.$$
Then by Morrey space estimates of Riesz potentials as in claim 4.2 of Lemma \ref{small_holder},
we have that $\nabla d_1\in M_*^{\frac{2-2\alpha}{1-2\alpha}, 2-2\alpha}(\mathbb R^3)$,
$\nabla d_2\in L^6(\mathbb R^3)$, and
\begin{equation}\label{morrey_est6}
\Big\|\nabla d_1\Big\|_{M_*^{\frac{2-2\alpha}{1-2\alpha}, 2-2\alpha}(\mathbb R^3)}
\le C\Big\|\nabla d\Big\|_{M^{2,2-2\alpha}(B_{\frac{r_1}2})}^2\le C(r_1,\delta_0, L_1,L_2),
\end{equation}
and
\begin{equation}\label{l6-estimate-d2}
\Big\|\nabla d_2\Big\|_{L^6(\mathbb R^3)}\le C\big\|\tau\big\|_{L^2(B_{\frac{r_1}2})}.
\end{equation}
Since $\displaystyle\lim_{\alpha\uparrow \frac12}\frac{2-2\alpha}{1-2\alpha}=+\infty$, (\ref{morrey_est6})
implies that $\nabla d_1\in L^q(B_{\frac{r_1}2})$ for any $q\in (1,+\infty)$, and
\begin{equation}\label{lq-estimate-d1}
\Big\|\nabla d_1\Big\|_{L^q(B_{\frac{r_1}2})}\le C(q, r_1, \delta_0, L_1, L_2).
\end{equation}
Since
$$\Delta d_3=0 \ \ \ {\rm{in}}\ \ \ B_{\frac{r_1}4},$$
it follows from the standard theory that $\nabla d_3\in L^4(B_{\frac{r_1}5})$ and
\begin{equation}\label{l4-estimate-d3}
\Big\|\nabla d_3\Big\|_{L^4(B_{\frac{r_1}5})}\le C
\Big(\big\|\nabla d\big\|_{L^2(B_{\frac{r_1}4})}+\big\|\nabla d_1\big\|_{L^2(B_{\frac{r_1}4})}
+\big\|\nabla d_2\big\|_{L^2(B_{\frac{r_1}4})}\Big)\le C(r_1, \delta_0, L_1, L_2).
\end{equation}
Putting (\ref{lq-estimate-d1}), (\ref{l6-estimate-d2}), and (\ref{l4-estimate-d3}) together yields
that $\nabla d\in L^4(B_{\frac{r_1}5})$ and
$$\Big\|\nabla d\Big\|_{L^4(B_{\frac{r_1}5})}\le C(r_1, \delta_0, L_1, L_2).$$
Now we can apply the standard $L^2$-estimate to conclude that $d\in W^{2,2}(B_{\frac{r_1}8})$
with the desired estimate.
\end{proof}

\medskip
\section{$H^1$ pre-compactness for certain approximated Ginzburg-Landau equation}
In this section, we will consider the set of solutions to approximated Ginzburg-Landau
equation with ranges in $\big\{y=(y^1,y^2,y^3)\in \mathbb R^3:
|y|\le 1, y^3\ge {-1+a}\big\}$, with uniformly bounded energies and uniformly bounded $L^2$-tension fields.
We will show that it is precompact in $H^1_{\rm{loc}}(\Omega)$ and uniformly
bounded in $H^2_{\rm{loc}}(\Omega)$.

For any $0<a\le 2$, $L_1$, and $L_2>0$, define the set $\mathbf{X}({L_1, L_2, a;\Omega})$ consisting of maps
$d_\epsilon\in H^1(\Omega,\mathbb R^3)$, $0<\epsilon\le 1$, that are solutions of
\begin{equation}\label{AGL5}
\Delta d_\epsilon+\frac{1}{\epsilon^2}(1-|d_\epsilon|^2) d_\epsilon=\tau_\epsilon\ \ {\rm{in}}\ \ \Omega
\end{equation}
such that the following properties hold:
\begin{itemize}
\item[(i)] $|d_\epsilon|\le 1$ and $d_\epsilon^3\ge -1+a$ for a.e. $x\in\Omega$.
\item[(ii)] $\displaystyle E_\epsilon(d_\epsilon)=\int_\Omega e_\epsilon(d_\epsilon)\,dx\le L_1$.
\item[(iii)] $\displaystyle\big\|\tau_\epsilon\big\|_{L^2(\Omega)}\le L_2$.
\end{itemize}
We have
\begin{thm}\label{precomp1} For any $a\in (0,2],\ L_1>0,$ and $L_2>0$, the set $\mathbf{X}(L_1, L_2,a; \Omega)$ is precompact in
$H^1_{\rm{loc}}(\Omega,\mathbb R^3)$. In particular, if for $\epsilon\rightarrow 0$,
$\{d_\epsilon\}\subset H^1(\Omega,\mathbb R^3)$ is a sequence of maps in
${\bf X}(L_1,L_2, a; \Omega)$,  then there exists
a map $d_0\in H^1(\Omega,\mathbb S^2)$
such that after passing to possible subsequences, $d_\epsilon\rightarrow d_0$ in $H^1_{\rm{loc}}(\Omega, \mathbb R^3)$.
\end{thm}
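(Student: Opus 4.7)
My approach is to extract a weak $H^1$ limit $d_0$, use Lemma \ref{small_holder} to obtain strong $H^1_{\rm loc}$ convergence off a small-energy concentration set $\Sigma$, and then use the range constraint $d_\epsilon^3 \ge -1+a$ through a tangent-map argument to rule out $\Sigma$ entirely.

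First, from the bounds (ii)-(iii), after extracting a subsequence, $d_\epsilon \rightharpoonup d_0$ in $H^1(\Omega)$, $d_\epsilon \to d_0$ in $L^p_{\rm loc}(\Omega)$ for all $p < 6$, and $\tau_\epsilon \rightharpoonup \tau_0$ in $L^2(\Omega)$; the inequality $\int_\Omega (1-|d_\epsilon|^2)^2 \le 4\epsilon^2 L_1 \to 0$ together with a.e.\ convergence forces $|d_0| = 1$ a.e.\ and preserves $d_0^3 \ge -1+a$, so $d_0 \in H^1(\Omega, \mathbb S^2_{-1+a})$. Let $\delta_0$ and $r_0$ be the constants furnished by Lemma \ref{small_holder} for $(L_1, L_2)$, and set
\[
\Sigma := \bigcap_{0 < r \le r_0/2} \Bigl\{x_0 \in \Omega : \liminf_{\epsilon \to 0} \frac{1}{r}\int_{B_r(x_0)} e_\epsilon(d_\epsilon) \ge \delta_0^2\Bigr\}.
\]
For any $x_0 \in \Omega \setminus \Sigma$, the smallness hypothesis (\ref{small1}) holds at some scale $r_1$, and Lemma \ref{small_holder} (after a further subsequence) gives $d_\epsilon \to d_0$ strongly in $H^1(B_{r_1/4}(x_0))$; a standard covering/diagonal argument then produces $d_\epsilon \to d_0$ in $H^1_{\rm loc}(\Omega \setminus \Sigma)$. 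The monotonicity (\ref{AMI1}) implies that $\Sigma$ is closed in $\Omega$ and has locally finite $1$-dimensional Hausdorff measure.

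The crux is to show $\Sigma = \emptyset$. Fix $x_0 \in \Sigma$ and pick scales $r_\epsilon \downarrow 0$ diagonally so that $r_\epsilon^{-1} \int_{B_{r_\epsilon}(x_0)} e_\epsilon(d_\epsilon) \ge \delta_0^2/2$. Set $\tilde d_\epsilon(x) := d_\epsilon(x_0 + r_\epsilon x)$; this solves (\ref{AGL5}) on growing balls with parameter $\tilde \epsilon := \epsilon/r_\epsilon$ and rescaled tension $\tilde \tau_\epsilon(x) := r_\epsilon^2 \tau_\epsilon(x_0 + r_\epsilon x)$ whose $L^2(B_R)$-norm is $\sqrt{r_\epsilon}\, \|\tau_\epsilon\|_{L^2(B_{R r_\epsilon}(x_0))} \to 0$. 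By (\ref{AMI1}), the rescaled Ginzburg-Landau energy on $B_R$ is $O(R)$ for all $R \ge 1$. Arranging additionally that $\tilde \epsilon \to 0$ (enforceable because otherwise $\epsilon / r_\epsilon$ stays bounded below and the small-energy compactness of Lemma \ref{small_holder} applies at an intermediate scale, removing $x_0$ from $\Sigma$), the potential term disappears in the limit and, applying the first half of the present argument recursively on $\mathbb R^3$, $\tilde d_\epsilon$ converges strongly in $H^1_{\rm loc}(\mathbb R^3 \setminus \Sigma_\infty)$ to a suitable weakly harmonic map $d_\infty : \mathbb R^3 \to \mathbb S^2_{-1+a}$ with zero tension. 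Passing the monotonicity (\ref{HMI2}) to $d_\infty$ and exploiting the diagonal choice of $r_\epsilon$ to force equality there, we obtain $\partial d_\infty / \partial |x| \equiv 0$, so $d_\infty(x) = \omega(x/|x|)$ for a smooth harmonic map $\omega : \mathbb S^2 \to \mathbb S^2_{-1+a}$.

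Here the range assumption is decisive: by the Eells-Wood classification every harmonic map $\mathbb S^2 \to \mathbb S^2$ is (anti-)holomorphic, hence either constant or surjective as a rational map of $\mathbb{CP}^1$. Since $\omega(\mathbb S^2) \subset \mathbb S^2_{-1+a} \subsetneq \mathbb S^2$ for any $a > 0$ (the point $(0,0,-1)$ is excluded), $\omega$ must be constant, so $d_\infty$ is constant. This contradicts $\int_{B_1} |\nabla d_\infty|^2 \ge \delta_0^2/2 > 0$, a lower bound inherited from the scale choice via strong $H^1_{\rm loc}$ convergence of $\tilde d_\epsilon$ on $\mathbb R^3 \setminus \{0\}$. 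Hence $\Sigma = \emptyset$, and $d_\epsilon \to d_0$ in $H^1_{\rm loc}(\Omega, \mathbb R^3)$. The main obstacle is the blow-up analysis: one must (i) choose $r_\epsilon$ so the Ginzburg-Landau potential becomes negligible in the limit, (ii) propagate the small-energy $H^1$ compactness of Lemma \ref{small_holder} to the rescaled problem on all of $\mathbb R^3 \setminus \Sigma_\infty$ so that the lower energy bound passes to $d_\infty$, and (iii) upgrade almost monotonicity to exact monotonicity along the diagonal sequence to deduce $0$-homogeneity---this is the Lin-Wang blow-up scheme of \cite{LW1, LW2, LW3}, adapted here to the Ginzburg-Landau approximation.
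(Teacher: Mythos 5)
Your overall strategy (weak limit, concentration set $\Sigma$, strong convergence off $\Sigma$ via Lemma~\ref{small_holder}, then rule out $\Sigma$ using the range constraint) mirrors the paper's. But the crux step --- ruling out $\Sigma$ --- has a genuine gap, and it is exactly where the paper's proof is most delicate.

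When you blow up at $x_0\in\Sigma$ with the diagonal scales $r_\epsilon$, the rescaled sequence $\tilde d_\epsilon$ converges strongly only off \emph{its own} concentration set $\Sigma_\infty$, and there is no reason for $\Sigma_\infty$ to reduce to $\{0\}$. On the contrary: by Federer--Ziemer (\cite{FZ}), $\Theta^1\bigl(|\nabla d_0|^2\,dx,\cdot\bigr)=0$ for $H^1$-a.e.\ point of $\Omega$, so at $H^1$-a.e.\ $x_0\in\Sigma$ the one-dimensional density of $\mu=\frac12|\nabla d_0|^2\,dx+\nu$ is carried entirely by the defect measure $\nu$. At such a point the tangent measure $\mu_*$ is $\Theta^1(\nu,x_0)\,H^1\llcorner L$ for a line $L$ (by rectifiability of $\Sigma$), the weak limit $d_\infty$ of $\tilde d_\epsilon$ is a \emph{constant}, and $\Sigma_\infty=L$. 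Consequently, (i) your claimed strong $H^1_{\rm loc}$ convergence on $\mathbb R^3\setminus\{0\}$ is false; (ii) the lower bound $\int_{B_1}|\nabla d_\infty|^2\ge\delta_0^2/2$ does not pass to the limit, because all the energy sits in the defect measure on $L$; (iii) you cannot ``force equality'' in the monotonicity formula for $d_\infty$, since the constant $r^{-1}\mu_*(B_r)$ decomposes into a $d_\infty$-part and a $\nu_\infty$-part and only the sum is constant in $r$. The ``contradiction'' you derive from a nontrivial homogeneous tangent map into $\mathbb S^2_{-1+a}$ never materializes: the blow-up at a generic point of $\Sigma$ simply produces a constant, so the Eells--Wood classification is never invoked against anything.

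The fix is precisely the second-scale rescaling that the paper carries out in Claim~6.5: pick a generic $x_0\in\Sigma$ (approximate-continuity point of $\Theta^1(\nu,\cdot)$, with tangent line to $\Sigma$, and with $\Theta^1(|\nabla d_0|^2,x_0)=0$), blow up to obtain $\mu_*$ concentrated on a line; \emph{then} slice at a generic $x_3$-level, locate a bubbling scale $\lambda_i$ via the maximal-function/Fubini selection (\ref{maximal1})--(\ref{concentration3}), and rescale a second time. It is only this two-parameter rescaling that captures the concentrating energy as a nontrivial finite-energy harmonic map $\widehat d\colon\mathbb R^2\to\mathbb S^2_{-1+a}$, against which the range constraint (via degree zero / Eells--Wood) actually bites. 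Once $\nu\equiv0$ is established this way, the single blow-up you describe --- now of $d_0$ itself, not of $d_\epsilon$ --- does give a homogeneous tangent map and finishes the argument (the paper's Claim~6.6, via Theorem~\ref{precomp2}). Your proof conflates these two distinct blow-ups into one, and the single blow-up does not see the bubble.

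As a smaller point, your parenthetical justification that $\tilde\epsilon=\epsilon/r_\epsilon\to0$ is ``enforceable because otherwise \ldots\ Lemma~\ref{small_holder} applies at an intermediate scale, removing $x_0$ from $\Sigma$'' is not correct as stated: for $x_0\in\Sigma$ the renormalized energy is $\ge\delta_0^2$ at \emph{every} small scale, so no intermediate scale satisfies the smallness hypothesis of Lemma~\ref{small_holder}. (One can certainly arrange $\tilde\epsilon\to0$ since $x_0\in\Sigma$ gives freedom in choosing $r_\epsilon\gg\epsilon$, but the argument you give for it does not hold up.)
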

\begin{proof} For $0<\epsilon_i\le 1$, let $\{d_{\epsilon_i}\}\subset {\bf X}(L_1,L_2, a; \Omega)$ be a sequence of
maps. Assume that there are $\epsilon_0\in [0,1]$ and $d_0\in H^1(\Omega,\mathbb R^3)$
such that $\epsilon_i\rightarrow \epsilon_0$ and $d_{\epsilon_i}\rightharpoonup d_0$ in $H^1(\Omega,\mathbb R^3)$
and $\tau_{\epsilon_i}\rightharpoonup \tau_0$ in $L^2(\Omega, \mathbb R^3)$
as $i\rightarrow +\infty$.
We divide the proof into two cases.\\
{\it Case 1}: $\epsilon_0>0$. Since
$$\Big\|\frac{1}{\epsilon_i^2}(1-|d_{\epsilon_i}|^2)d_{\epsilon_i}\Big\|_{L^\infty(\Omega)}\le \frac{2}{\epsilon_0^2},$$
we have
$$\Big\|\Delta d_{\epsilon_i}\Big\|_{L^2(\Omega)}\le
\Big\|\frac{1}{\epsilon_i^2}(1-|d_{\epsilon_i}|^2)d_{\epsilon_i}\Big\|_{L^2(\Omega)}+\big\|\tau_{\epsilon_i}\big\|_{L^2(\Omega)}
\le C\epsilon_0^{-2}|\Omega|+L_2.$$
By $W^{2,2}$-estimate we conclude that $\{d_{\epsilon_i}\}$ is a bounded sequence in $W^{2,2}_{\rm{loc}}(\Omega)$.
Hence we have that $d_{\epsilon_i}\rightarrow d_0$ in $H^1_{\rm{loc}}(\Omega, \mathbb R^3)$
as $i\rightarrow +\infty$.

\smallskip
\noindent
{\it Case 2}: $\epsilon_0=0$. Then it is easy to see that $d_0\in H^1(\Omega,\mathbb S^2)$.
We may assume that there exists nonnegative Radon measures $\nu$ and $\mu$ in $\Omega$ such that
$$e_{\epsilon_i}(d_{\epsilon_i})\,dx\rightharpoonup \mu:=\frac12|\nabla d_0|^2\,dx+\nu$$
as convergence of Radon measures in $\Omega$ for $i\rightarrow +\infty$.

Let $\delta_0>0$ be given by
Lemma \ref{small_holder}. Define the concentration set $\Sigma\subset\Omega$ by
\begin{equation}\label{concentration}
\Sigma:=\bigcap_{0<r<{\rm{d}}(x_0,\partial\Omega)}\Big\{x_0\in \Omega:
\ \liminf_{i\rightarrow +\infty} \Phi_r(d_{\epsilon_i}; x_0)\ge \delta_0^2\Big\},
\end{equation}
where
$$\Phi_r(d_{\epsilon_i}; x_0)
:=\frac1{r}\int_{B_r(x_0)}\big(e_{\epsilon_i}(d_{\epsilon_i})-\langle (x-x_0)\cdot \nabla d_{\epsilon_i},\tau_{\epsilon_i}\rangle\big)
+\frac12\int_{B_r(x_0)} |x-x_0||\tau_{\epsilon_i}|^2.$$
By Lemma \ref{AGL0}, we have that for any  $x_0\in\Omega, \ 0<r\le R<{\rm{d}}(x_0,\partial\Omega)$,
it holds
\begin{equation}\label{AMI3}
\Phi_R(d_{\epsilon_i}; x_0)\ge \Phi_r(d_{\epsilon_i}; x_0)+\frac12\int_{B_R(x_0)\setminus
B_r(x_0)}|x-x_0|^{-1}\big|\frac{\partial d_{\epsilon_i}}{\partial |x-x_0|}\big|^2.
\end{equation}

Motivated by the blow-up analysis for stationary harmonic maps by Lin \cite{Lin} and harmonic
map heat flows by Lin-Wang \cite{LW1,LW2, LW3} (see also \cite{LW4}),  we will perform the blow-up analysis of the approximated
harmonic maps in ${\bf X}(L_1, L_2, a; \Omega)$.

Since
$$\Big|\frac1{r}\int_{B_r(x_0)}\langle (x-x_0)\cdot\nabla d_{\epsilon_i},\tau_{\epsilon_i}\rangle \Big|
\le \frac{r^{\frac12}}{r}\int_{B_r(x_0)}e_{\epsilon_i}(d_{\epsilon_i})+\int_{B_r(x_0)}|x|^{\frac12}|\tau_{\epsilon_i}|^2$$
holds for any $x_0\in\Omega$ and $0<r<{\rm{d}}(x_0,\partial\Omega)$,
it follows from (\ref{AMI3}) that there exist two constants $0<c, C<+\infty$
such that
\begin{equation}\label{AMI40}
(1-r^\frac12)\frac1{r}\int_{B_r(x_0)}e_{\epsilon_i}(d_{\epsilon_i})-cL_2^2 r^\frac12
\le (1+R^\frac12)\frac1{R}\int_{B_R(x_0)}e_{\epsilon_i}(d_{\epsilon_i})+CL_2^2R^\frac12
\end{equation}
holds for $x_0\in\Omega$ and $0<r<R<{\rm{d}}(x_0,\partial\Omega)$. Taking $i$ to $\infty$ in (\ref{AMI4}) yields
\begin{equation}\label{AMI5}
(1-r^\frac12)\frac1{r}\mu(B_r(x_0))-cL_2^2 r^\frac12
\le (1+R^\frac12)\frac1{R}\mu(B_R(x_0))+CL_2^2R^\frac12,
\ \forall\ x_0\in\Omega, \  0<r<R<{\rm{d}}(x_0,\partial\Omega).
\end{equation}
This implies that  for any $x\in\Omega$,
$$\Theta^1(\mu,x)=\lim_{r\rightarrow 0} r^{-1}\mu(B_r(x))$$
exists and is finite, and $\Theta^1(\mu,\cdot)$ is upper semicontinuous in $\Omega$.
Moreover, we have that there exists $C>0$ depending on $L_1$ such that
\begin{equation}\label{concentration1}
\Sigma=\Big\{x\in\Omega: \delta_0^2\le\Theta^1(\mu; x)\leq C\Big\}.
\end{equation}
Now we need the following general claims.

\smallskip
\noindent{\bf Claim 6.1}. $\Sigma\subset\Omega$ is a closed, $1$-dimensional rectifiable set with locally finite $H^1$-measure,
i.e.,
\begin{equation}\label{measure_est}
H^1(\Sigma\cap K)\le C(K, L_1, L_2)<+\infty, \ \forall\ K\subset\subset\Omega.
\end{equation}

Since $\Theta^1(\mu,\cdot)$ is upper semicontinuous, it follows from (\ref{concentration1}) that $\Sigma$ is closed. The $1$-rectifiability
of $\Sigma$ follows from the general rectifiability theorem by Preiss \cite{preiss} and Lin \cite{lin}. Let's sketch the measure
estimate (\ref{measure_est}). Since $\Sigma\cap K$ is compact,  it follows from the definition of $\Sigma$ and Vitali's covering lemma
that for any $\eta>0$ there exist a positive
integer $N=N_\eta$, $\displaystyle\{x_l\}_{l=1}^{N}\subset \Sigma\cap K$ and $\displaystyle\{r_l\}_{l=1}^N\subset\mathbb R_+$ such that
$\displaystyle\{B_{r_l}(x_l)\}_{l=1}^N$ are mutually disjoint, $\displaystyle\Sigma\subset\bigcup_{l=1}^N B_{5r_l}(x_l)$, and
$$\lim_{i\rightarrow\infty} \Phi_{r_l}(d_{\epsilon_i}; x_l)\ge \delta_0^2, \ 1\le l\le N.$$
This implies there exists a sufficiently large $i_0>1$ such that
$$\Phi_{r_l}(d_{\epsilon_{i_0}}; x_l)\ge \delta_0^2,  \ 1\le l\le N.$$
Hence
\begin{eqnarray*}
H^1_{5\eta}(\Sigma\cap K)\leq  5\sum_{l=1}^N r_l&\lesssim& \sum_{l=1}^N \int_{B_{r_l}(x_l)}
\left(e_{\epsilon_{i_0}}(d_{\epsilon_0})-\langle (x-x_l)\cdot\nabla d_{\epsilon_{i_0}}, \tau_{\epsilon_{i_0}}\rangle\right)
+|\tau_{\epsilon_i}|^2
\\
&\lesssim& \int_{\bigcup_{l=1}^N B_{r_l}(x_l)}\left(e_{\epsilon_{i_0}}(d_{\epsilon_0})-\langle (x-x_l)\cdot\nabla d_{\epsilon_{i_0}}, \tau_{\epsilon_{i_0}}\rangle\right)+|\tau_{\epsilon_i}|^2\\
&\leq& C(K)(L_1^2+L_2^2).
\end{eqnarray*}
Sending $\eta$ to zero, this implies (\ref{measure_est}).

\smallskip
\noindent{\bf Claim 6.2}. $d_\epsilon\rightarrow d_0$ in $H^1_{\rm{loc}}(\Omega\setminus\Sigma, \mathbb R^3)$,
$\nu\equiv 0$ in $\Omega\setminus \Sigma$, and $d_0\in C^{\frac12}
\cap W^{1,p}_{\rm{loc}}(\Omega\setminus\Sigma,\mathbb S^2)$ for all $2<p<3$.

In fact, for any $x_0\in\Omega\setminus\Sigma$, it follows from the definition of $\Sigma$ and (\ref{AMI3})
that there exist sufficiently small $\eta_0>0$, $r_0>0$, and sufficiently large $i_0\ge 1$ such that
$$ \Phi_{r_0}(d_{\epsilon_{i}}; x_0)\le \delta_0^2-\eta_0, \ \forall i\ge i_0.$$
This, combined with (\ref{AMI40}),  implies that
$$(1-r_0^\frac12)\frac1{r_0}\int_{B_{r_0}(x_0)}e_{\epsilon_i}(d_{\epsilon_i})\le \Phi_{r_0}(d_{\epsilon_i})+cL_2^2 r_0^\frac12
\le  \delta_0^2-\eta_0 +cL_2^2 r_0^\frac12\le\delta_0^2(1-r_0^\frac12), \ \forall\  i\ge i_0.$$
provide $r_0=r_0(\eta_0,\delta_0)>0$ is chosen to be sufficiently small. Hence
$$\frac1{r_0}\int_{B_{r_0}(x_0)}e_{\epsilon_i}(d_{\epsilon_i})\le\delta_0^2,\  \forall\  i\ge {i_0}.$$
Applying Lemma \ref{small_holder}, we conclude that $d_{\epsilon_i}\rightarrow d_0$ in
$H^1(B_{\frac{r_0}2}(x_0), \mathbb R^3)$ and $\displaystyle\frac{(1-|d_{\epsilon_i}|^2)^2}{\epsilon_i^2}
\rightarrow 0$ in $L^1(B_{\frac{r_0}2}(x_0))$. Hence $\nu\equiv 0$ in $B_{\frac{r_0}2}(x_0)$. It also
follows from (\ref{lp-bound}) and (\ref{holder_est}) that
$d_0\in C^\frac12\cap W^{1,p}(B_{\frac{r_0}2}(x_0))$ for all $2<p<3$.  Since $x_0\in\Omega\setminus\Sigma$
is arbitrary, Claim 6.2 follows.

Denote the singular set of $d_0$ by
$${\rm{sing}}(d_0):=\Big\{x\in\Omega: d_0 \ {\rm{is\ discontinuous\ at}}\ x\Big\}.$$
Then we have

\smallskip
\noindent{\bf Claim 6.3}.
$\Sigma={\rm{supp}}(\nu)\cup {\rm{sing}}(d_0)$.

It is easy to see from claim 6.2 that ${\rm{supp}}(\nu)\cup {\rm{sing}}(d_0)\subset\Sigma$. If $x_0\notin {\rm{supp}}(\nu)\cup {\rm{sing}}(d_0)$, then there exists $r_0>0$ such that $\nu(B_{r_0}(x_0))=0$ and $d_0\in C(B_{r_0}(x_0))$. By Lemma \ref{small_holder}, we
have that $d_0\in H^1(\Omega,\mathbb S^2)$ is an approximated harmonic map with tension field $\tau_0$, i.e.,
\begin{equation}\label{AHM}
\Delta d_0+|\nabla d_0|^2 d_0=\tau_0.
\end{equation}
For small $\epsilon>0$, assume $r_0>0$ such that
$${\rm{osc}}_{B_{r_0}(x_0)}(d_0)\le\epsilon.$$
Then by the standard hole filling argument (see \cite{LW4}), there exists $\theta_0\in (0, \frac12)$ such that
$$\frac1{\theta_0 r_0}\int_{B_{\theta_0 r_0}(x_0)}|\nabla d_0|^2\le
\frac1{2r_0}\int_{B_{r_0}(x_0)}|\nabla d_0|^2+Cr_0.$$
Iterating this inequality then implies that there exists $\alpha_0\in (0,\frac12)$ such that for any $0<r\le r_0$,
$$\frac1{r}\int_{B_r(x_0)}|\nabla d_0|^2\le \big(\frac{r}{r_0}\big)^{2\alpha_0}
\frac1{r_0}\int_{B_{r_0}(x_0)}|\nabla d_0|^2+Cr.$$
In particular, we have
$$\lim_{r\rightarrow 0} \frac1{r}\int_{B_r(x_0)}|\nabla d_0|^2=0,$$
so that $\Theta^1(\mu,x_0)=0$ and hence $x_0\notin\Sigma$. This proves claim 6.3.

\smallskip
\noindent{\bf Claim 6.4}. For $H^1$ a.e. $x\in\Sigma$,
$$\displaystyle\Theta^1(\nu;x)
=\lim_{r\rightarrow 0}\frac1{r}\nu(B_r(x))$$
exists and $\delta_0^2\le\Theta^1(\nu;x)\le C$, and $\nu=\Theta^1(\nu, \cdot)H^1 {\rm{L}}\Sigma$.

Since $d_0\in H^1(\Omega,\mathbb S^2)$, by Federer-Ziemmer's theorem \cite{FZ}
that for $H^{1}$ a.e. $x\in\Omega$
\begin{equation}\label{d-density}
\Theta^1\big(|\nabla d_0|^2,x\big):=\lim_{r\rightarrow 0}\frac1{r}\int_{B_r(x)}|\nabla d_0|^2=0.
\end{equation}
The conclusions of claim 6.4 then follow from this. See \cite{lin} or \cite{LW4} for the detail.
We have not used the condition (i) in the definition of ${\bf X}(L_1,L_2,a;\Omega)$ during the proof of the above claims.
Next we employ this condition to show that $\nu\equiv 0$ in $\Omega$. More precisely, we have

\smallskip
\noindent{\bf Claim 6.5}. $H^1(\Sigma)=0$ and $\nu\equiv 0$, $d_{\epsilon}\rightarrow d_0$ in $H^1_{\rm{loc}}(\Omega,\mathbb R^3)$.

Suppose that $H^1(\Sigma)>0$. Then, as in \cite{lin} and \cite{LW2}, since $\Theta^1(\nu, \cdot)$ is $H^1$-measurable, it is approximately continuous
for $H^1$ a.e. $x\in\Sigma$. This, combined with (\ref{d-density}) and the $1$-rectifiability of $\Sigma$, implies
that there exists $x_0\in\Sigma$ such that
\begin{itemize}
\item [i)] $\Theta^1(\nu,\cdot)$ is $H^1$-approximately continuous at $x_0$, and $\delta_0^2\le\Theta^1(\nu,x_0)\le C$.
\item [ii)] $\Sigma$ has  $1$-dimensional tangent plane $T_{x_0}\Sigma$ at $x_0$.
\item [iii)] $\Theta^1(|\nabla d_0|^2,x_0)=0$.
\end{itemize}
For simplicity, assume $x_0=0\in\Sigma$ and $T_{x_0}\Sigma=\big\{(0,0, x_3): x_3\in\mathbb R\big\}.$
For $r_i\rightarrow 0$, define $\widetilde {d}_i(x)=d_{\epsilon_i}(r_i x)$ and $\widetilde \tau_i(x)=r_i^2\tau_{\epsilon_i}(r_i x)$
for $x\in \Omega_i\equiv r_i^{-1}\Omega$, and $\widetilde{\epsilon}_i=\frac{\epsilon_i}{r_i}$. Then we have
\begin{equation}\label{AGL5}
\Delta \widetilde d_i+\frac{1}{\widetilde\epsilon_i^2}(1-|\widetilde d_i|^2)\widetilde d_i=\widetilde\tau_i
\ \ \ {\rm{in}}\ \Omega_i .
\end{equation}
By following the blow-up scheme outlined in \cite{LW1}, we can assume that after passing to possible subsequences, there
exists a tangent measure $\mu_*$ of $\mu$ at $0$ such that
$$e_{\widetilde\epsilon_i}(\widetilde d_i)\,dx\rightharpoonup \mu_*,$$
as convergence of Radon measures on $\mathbb R^3$,
and
$$\widetilde d_i \rightharpoonup {\rm{constant\ \ in\ }}
H^1(\mathbb R^3,\mathbb R^3).$$
Moreover,
\begin{equation}
\label{tangent}
\mu_*=\Theta^1(\nu, 0)H^1{\rm{L}}\big\{(0,0, x_3): \ x_3\in\mathbb R\big\}.
\end{equation}
Since $\widetilde d_i$ is a solution of the equation (\ref{AGL5}), $\widetilde d_i$ also satisfies the energy monotonicity
formula (\ref{AMI1}). In particular, we have
\begin{equation}\label{AMI4}
\Phi_R(\widetilde d_i; x)\ge \Phi_r(\widetilde d_i; x)+\frac12\int_{B_R(x)\setminus
B_r(x)}|y-x|^{-1}\Big(\big|\frac{\partial \widetilde d_i}{\partial |y-x|}\big|^2+\frac{(|1-|\widetilde d_i|^2)^2}{\widetilde\epsilon_i^2}\Big),
\end{equation}
for $x\in \Omega_i$ and $0<r\le R<{\rm{d}}(x,\partial\Omega_i)$, where
$$\Phi_r(\widetilde d_{i}; x)
:=\frac1{r}\int_{B_r(x)}\big(e_{\widetilde\epsilon_i}(\widetilde d_i)
-\langle (y-x)\cdot \nabla \widetilde d_i,\widetilde\tau_{i}\rangle\big)
+\frac12\int_{B_r(x)} |y-x||\widetilde\tau_i|^2,
\ x\in\Omega_i, \ 0<r<{\rm{d}}(x,\partial\Omega_i).$$
Since
$$\Big|r^{-1}\int_{B_r(x)}\langle (y-x)\cdot \nabla \widetilde d_i,\widetilde\tau_{i}\rangle\Big|
\le Cr_i^\frac12\ {\rm{and}}\ \int_{B_r(x)} |y-x||\widetilde\tau_i|^2\le Cr_i,
$$
it follows that
\begin{equation}\label{limit_density}
\lim_{i\rightarrow \infty}\Phi_r(\widetilde d_i,x)=\frac1{r}\mu_*(B_r(x)),
\end{equation}
for $x\in\mathbb R^3$ and $r>0$. It is clear that (\ref{limit_density}), (\ref{tangent}), and (\ref{AMI4}) imply
that
\begin{equation}\label{radial1}
\lim_{i\rightarrow\infty} \int_{B_R(x)\setminus
B_r(x)}|y-x|^{-1}\Big(\big|\frac{\partial \widetilde d_i}{\partial |y-x|}\big|^2
+\frac{(|1-|\widetilde d_i|^2)^2}{\widetilde\epsilon_i^2}\Big)=0,
\end{equation}
holds for any $x=(0,0,x_3)\in T_0\Sigma$ and $0<r<R$.
Applying (\ref{radial1}) to two center points $(0, 0,0)$ and $(0,0, 2)$, we can obtain
\begin{equation}\label{radial2}
\lim_{i\rightarrow\infty} \int_{B_1^2\times [-1,1]}
\Big(\big|\frac{\partial \widetilde d_i}{\partial x_3}\big|^2
+\frac{(|1-|\widetilde d_i|^2)^2}{\widetilde\epsilon_i^2}\Big)=0.
\end{equation}
Recall that the condition (i) of ${\bf X}(L_1,L_2,a; \Omega)$ implies
\begin{equation}\label{range}
|\widetilde d_i|\le 1, \ \widetilde d_i^3\ge -1+a.
\end{equation}

Now we indicate how to produce a nontrivial harmonic map $\omega:\mathbb R^2\to\mathbb S^2$ with finite energy.
Define
$f_i:[-1,1]\to\mathbb R_+$ by
$$f_i(t)=\int_{B_1^2}\Big(\big|\frac{\partial \widetilde d_i}{\partial x_3}\big|^2
+\frac{(|1-|\widetilde d_i|^2)^2}{\widetilde\epsilon_i^2}\Big)(x,t)\,dx,$$
$g_i:[-1,1]\to\mathbb R_+$ by
$$g_i(t)=\int_{B_1^2} e_{\widetilde\epsilon_i}(\widetilde d_i)(x,t)\,dx,$$
and $h_i:[-1, 1]\to\mathbb R_+$ by
$$h_i(t)=\int_{B_1^2}|\widetilde\tau_i|^2(x,t)\,dx.$$
By Fubini's theorem and (\ref{radial2}), we have
$$\lim_{i\rightarrow \infty}\big\|f_i\big\|_{L^1([-1,1])}=0 \ {\rm{and}}\ \limsup_{i\rightarrow \infty}\big\|h_i\big\|_{L^1([-1,1])}
\le L_2.$$
Thus by the weak $L^1$-estimate of the Hardy-Littlewood maximal function we have
that for any $\beta>0$ there exists a set $E_\beta\subset [-\frac12,\frac12]$, with $|E_\beta|\ge 1-\beta$,
such that
\begin{equation}\label{maximal1}
\lim_{i\rightarrow\infty} \sup_{0<r\le \frac12} \frac1{r}\int_{t-r}^{t+r}f_i(x_3)\,dx_3=0,
\ \forall \ t\in E_\beta,
\end{equation}
and
\begin{equation}\label{maximal2}
\lim_{i\rightarrow\infty} \sup_{0<r\le \frac12} \frac1{r}\int_{t-r}^{t+r}h_i(x_3)\,dx_3\le CL_2,
\ \forall \ t\in E_\beta.
\end{equation}
We can also assume that there exists $F_\beta\subset E_\beta$, with $|F_\beta|\ge 1-2\beta$, such that
\begin{equation}\label{slice}
\lim_{i\rightarrow\infty}g_i(t)=\Theta^1(\nu,0), \ \forall\ t\in F_\beta.
 \end{equation}
For simplicity, assume $0\in F_\beta$. For $\delta_0>0$ given by Lemma \ref{small_holder},
there exist $\{x_i\}(\subset B_1^2)\rightarrow (0,0)$ and $\lambda_i\rightarrow 0^+$ such that
\begin{equation}\label{concentration3}
\int_{B^2_{\lambda_i}(x_i)}e_{\widetilde \epsilon_i}(\widetilde d_i)(x,0)\,dx
=\frac{\delta_0^2}{C(3)}=\max_{z\in B_\frac12^2}
\int_{B^2_{\lambda_i}(z)}e_{\widetilde \epsilon_i}(\widetilde d_i)(x,0)\,dx,
\end{equation}
where $C(3)>0$ is a large constant to be chosen later.

Define the rescaling maps
$$\widehat{d}_i(x,x_3)=\widetilde d_i((x_i,0)+\lambda_i (x,x_3)), \ (x,x_3)\in
\widehat\Omega_i:=\lambda_i^{-1}\big(\Omega_i\setminus\{(x_i,0)\}\big).$$
Then $\widehat {d}_i$ solves
\begin{equation}
\Delta\widehat{d}_i+\frac{1}{\widehat\epsilon_i^2}(1-|\widehat{d}_i|^2)\widehat d_i=\widehat \tau_i
\ \ {\rm{in}}\ \ \widehat\Omega_i,
\end{equation}
where $\widehat\epsilon_i=\frac{\widetilde\epsilon_i}{\lambda_i}$ and
$\widehat \tau_i(x, x_3)=\lambda_i^2\widetilde\tau_i(\lambda_i x,\lambda_i x_3)$.
It follows from (\ref{maximal1}), (\ref{maximal2}),  and (\ref{concentration3}) that
\begin{equation}\label{maximal3}
\lim_{i\rightarrow\infty} \sup_{0<r\le \lambda_i^{-1}} \frac1{r}
\int_{-r}^{r}\int_{B^2_{\lambda_i^{-1}}(0)} \Big(\Big|\frac{\partial \widehat  d_i}{\partial x_3}\Big|^2
+\frac{(|1-|\widehat d_i|^2)^2}{\widehat\epsilon_i^2}\Big)(x,t)\,dxdt=0,
\end{equation}
\begin{equation}\label{maximal4}
\sup_{0<r\le \lambda_i^{-1}} \frac1{r}
\int_{-r}^{r}\int_{B^2_{\lambda_i^{-1}}(0)} |\widehat\tau_i|^2(x,t)\,dxdt=
\lambda_i^2 \frac1{\lambda_ir}\int_{-\lambda_i r}^{\lambda_i r}\int_{B_1^2}|\widetilde\tau_i|^2(x,t)\,dxdt
\le CL_2^2\lambda_i^2\rightarrow 0,
\end{equation}
and
\begin{equation}\label{concentration4}
\int_{B^2_1(0)}e_{\widehat \epsilon_i}(\widehat d_i)(x,0)\,dx
=\frac{\delta_0^2}{C(3)}=\max_{z\in B^2_{\lambda_i^{-1}}}
\int_{B^2_{1}(z)}e_{\widehat \epsilon_i}(\widehat d_i)(x,0)\,dx,
\end{equation}
Moreover, for $\phi\in C_0^\infty(B_2^2(0))$, direct calculations imply
that
\begin{eqnarray}\label{uniform_third}
&&\frac{\partial}{\partial x_3}\int_{\mathbb R^2}
\phi^2(x)e_{\widehat\epsilon_i}(\widehat d_i)(x,x_3)\,dx\nonumber\\
&=&-\int_{\mathbb R^2}\big\langle\nabla_x \widehat d_i, \frac{\partial\widehat d_i}{\partial x_3}\big\rangle\cdot\nabla_x(\phi^2) +
\frac{\partial}{\partial x_3}\int_{\mathbb R^2}\big|\frac{\partial \widehat d_i}{\partial x_3}\big|^2\phi^2
-\int_{\mathbb R^2}\big\langle\widehat \tau_i, \frac{\partial\widehat d_i}{\partial x_3}\big\rangle\phi^2.
\end{eqnarray}
It follows from (\ref{concentration4}, (\ref{uniform_third}), (\ref{maximal3}), and (\ref{maximal4})
that
\begin{equation}\label{uniform_small}
\frac{1}{2}\int_{-2}^2\int_{B_2^2(x,0)}e_{\widehat \epsilon_i}(\widehat d_i)(x,x_3)\,dxdx_3\le C\frac{\delta_0^2}{C(3)}\le
\delta_0^2, \ \forall x\in B^2_{\lambda_i^{-1}},
\end{equation}
provided $C(3)>0$ is chosen to be sufficiently large. Hence Lemma \ref{small_holder}
implies that there exists $\widehat d\in H^1_{\rm{loc}}(\mathbb R^2\times [-2,2], \mathbb R^3)$
such that
$$\widehat d_i\rightarrow \widehat d\ \ \ {\rm{in}}\ \ H^1_{\rm{loc}}(\mathbb R^2\times [-2,2]).
$$
It follows from (\ref{maximal3}) that $\displaystyle
|\widehat d|=1 \ {\rm{and}}\ \frac{\partial \widehat d}{\partial x_3}=0\ {\rm{a.e.}}\ \mathbb R^2\times [-2,2]$
so that $\widehat d(x,x_3)=\widehat d(x)$ is independent of $x_3$. By
(\ref{concentration4}) and (\ref{slice}), we have that
\begin{equation}
\label{nontrivial}
\frac{\delta_0^2}{C(3)}\le \frac12\int_{\mathbb R^2}|\nabla\widehat d|^2\le \Theta(\nu,0).
\end{equation}
Hence $\widehat d\in \dot{H}^1(\mathbb R^2,\mathbb S^2)$.
Moreover, it follows from (\ref{maximal4}) and (\ref{nontrivial})  that $\widehat h$ is a nontrivial smooth harmonic map
from $\mathbb R^2$ to $\mathbb S^2$ with finite energy, i.e.,
$$\Delta\widehat d+\big|\nabla\widehat d\big|^2\widehat d=0 \ {\rm{in}}\ \mathbb R^2.$$
On the other hand, it follows from (\ref{range}) that
$\displaystyle \widehat d_i^3(x,x_3)\ge -1+a$ for $(x,x_3)\in\mathbb R^2\times [-2,2]$.
Hence we have
$$\widehat d^3(x)\ge -1+a, \ x\in\mathbb R^2.$$
In particular, ${\rm{deg}}(\widehat d)=0$. Since any nontrivial harmonic map from $\mathbb R^2$ to $\mathbb S^2$
with finite energy has non-zero degree, we conclude that $\widehat d=$ constant. This yields the desired contradiction.
Hence the conclusion of claim 6.5 holds true.

\smallskip
\noindent{\bf Claim 6.6}. $\Sigma=\emptyset$, and $d_0\in W^{2,2}_{\rm{loc}}(\Omega,\mathbb S^2_{-1+a})$.

Suppose $\Sigma\not=\emptyset$ and $x_0\in\Sigma$. By the definition of $\Sigma$, we have
that
\begin{equation}
\label{concentration5}
\lim_{i\rightarrow \infty}\Phi_r(d_{\epsilon_i}, x_0)\ge \delta_0^2,\ \forall\  r>0.
\end{equation}
It follows from claim 6.5
$$e_{\epsilon_i}(d_{\epsilon_i})\,dx\rightharpoonup \frac12|\nabla d_0|^2\,dx$$
as convergence of Radon measures as $i\rightarrow\infty$. In particular, $d_{\epsilon_i}\rightarrow d_0$
in $H^1_{\rm{loc}}(\Omega, \mathbb R^3)$. Assume $\tau_{\epsilon_i}\rightharpoonup
\tau_0$ in $L^2(\Omega,\mathbb R^3)$. Then we know that $d_0$ is an approximated harmonic
map to $\mathbb S^2_{-1+a}$ with tension field $\tau_0$:
\begin{equation}\label{AHM0}
\Delta d_0+|\nabla d_0|^2 d_0=\tau_0 \ \ {\rm{in}}\ \ \Omega,
\end{equation}
and $d_0$ satisfies the energy monotonicity formula:
\begin{equation}\label{AMI7}
\Psi_R(d_0; x)\ge \Psi_r(d_0; x)+\frac12\int_{B_R(x)\setminus
B_r(x)}|y-x|^{-1}\big|\frac{\partial d_0}{\partial |y-x|}\big|^2,
\end{equation}
for $x\in \Omega$ and $0<r\le R<{\rm{d}}(x,\partial\Omega)$, where
\begin{equation}\label{energy_density}
\Psi_r(d_0; x)
:=\frac1{r}\int_{B_r(x)}\big(\frac12|\nabla d_0|^2
-\langle (y-x)\cdot \nabla d_0, \tau\rangle\big)
+\frac12\int_{B_r(x)} |y-x||\tau_0|^2.
\end{equation}
It follows from (\ref{concentration5}) and (\ref{AMI7})
that
\begin{equation}\label{concentration6}
\Psi(d_0,x_0):=\lim_{R\downarrow 0}\Psi_R(d_0,x_0)\ge \delta_0^2.
\end{equation}
For $r_i\rightarrow 0$, define the blow-up sequence of $d_0$ at $x_0$,
 $d_i(x)=d_0(x_0+r_ix): B_2\to\mathbb S^2_{-1+a}$. Then we have
$$\lim_{i\rightarrow\infty}\frac12\int_{B_1}|\nabla d_i|^2
=\lim_{i\rightarrow\infty}\Psi_{r_i}(d_0, x_0)\ge \delta_0^2.$$
It is clear that $d_i$ is an approximated harmonic map with tension
field $\tau_i(x)=r_i^2\tau_0(r_i x)$ such that
\begin{itemize}
\item[i)] $d_i(B_1)\subset \mathbb S^2_{-1+a}$.
\item[ii)] $\displaystyle E(d_i)=\frac12\int_{B_1}|\nabla d_i|^2\le C_1.$
\item[iii)] $d_i$ satisfies the energy monotonicity inequality (\ref{AMI7}), with $d_0$ and $\tau_0$ replaced by
$d_i$ and $\tau_i$.
\item[iv)] $\|\tau_i\|_{L^2(B_1)}\le C\sqrt{r_i}.$
\end{itemize}
Hence $\{d_i\}\subset {\bf Y}(C_1, C\sqrt{r_i}, a; B_1)$. It follows from Theorem 7.1 below that there exists a harmonic map
$\omega\in H^1(B_1,\mathbb S^2_{-1+a})$ such that
$d_i\rightarrow \omega$ in $H^1_{\rm{loc}}(B_1,\mathbb R^3)$ so that
$$\frac12\int_{B_\frac12}|\nabla\omega|^2\ge\delta_0^2.$$
Moreover, (\ref{AMI7}) implies that
$$\int_{B_\frac12}\big|\frac{\partial\omega}{\partial |x|}\big|^2=0,$$
so that $\omega(x)=\omega(\frac{x}{|x|})$ is homogeneous of degree zero
and $\omega:\mathbb S^2\to\mathbb S^2_{-1+a}$ is a nontrivial harmonic map.
This is impossible. Hence $\Sigma=\emptyset$ and hence
Lemma \ref{small_holder1} implies $d_0\in W^{2,2}_{\rm{loc}}(\Omega,\mathbb S^2)$.
The proof is complete.
\end{proof}

\section{$H^1$ precompactness of suitable approximated harmonic map to $\mathbb S^2$}

For $0<a\le 2$ and $L_1, L_2>0$, define the set ${\bf Y}(L_1, L_2,a;\Omega)$ consisting of maps
$d\in H^1(\Omega,\mathbb S^2)$ that are suitable approximated harmonic maps, i.e.,
\begin{equation}\label{AHM3}
\Delta d+|\nabla d|^2 d=\tau\ \ {\rm{in}}\ \ \Omega
\end{equation}
that satisfy, in addition to (\ref{HMI2}), the following properties:
\begin{itemize}
\item [(i)] $d^3(x)\ge -1+a  \ {\rm{for\ a.e.}}\  x\in\Omega$.
\item [(ii)] $\displaystyle E(d)=\frac12\int_\Omega |\nabla d|^2\,dx\le L_1$.
\item [(iii)] $\displaystyle\big\|\tau\big\|_{L^2(\Omega)}\le L_2.$
\end{itemize}

\begin{thm}\label{precomp2} For any $a\in (0,2],\ L_1>0,$ and $L_2>0$, the set $\mathbf{Y}(L_1, L_2,a;\Omega)$
is precompact in
$H^1_{\rm{loc}}(\Omega,\mathbb S^2)$. In particular, if
$\{d_i\}\subset {\bf Y}(L_1, L_2, a;\Omega)$
is a sequence of approximated harmonic maps, with tension fields $\{\tau_i\}$,  then there exist
$\tau_0\in L^2(\Omega,\mathbb R^3)$ and
an approximated harmonic map $d_0\in {\bf Y}(L_1, L_2, a;\Omega)$, with tension field $\tau_0$,
such that after passing to possible subsequences,
$d_i\rightarrow d_0$ in $H^1_{\rm{loc}}(\Omega, \mathbb S^2)$ and $\tau_i\rightharpoonup
\tau_0$ in $L^2(\Omega,\mathbb R^3)$.
Moreover, $d_0\in W^{2,2}_{\rm{loc}}(\Omega, \mathbb S^2)$.
\end{thm}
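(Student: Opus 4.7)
\medskip

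\noindent\emph{Proof proposal for Theorem \ref{precomp2}.}
The plan is to follow the architecture of the proof of Theorem \ref{precomp1}, replacing the
Ginzburg--Landau monotonicity (Lemma \ref{AGL0}) by the harmonic-map monotonicity (Lemma \ref{HMI1}),
and the $\delta_0$-compactness (Lemma \ref{small_holder}) by the $\delta_0$-regularity
(Lemma \ref{small_holder1}). Given $\{d_i\}\subset \mathbf{Y}(L_1,L_2,a;\Omega)$ with tension fields
$\tau_i$, we may pass to a subsequence so that $d_i\rightharpoonup d_0$ in $H^1(\Omega,\mathbb R^3)$,
$d_i\to d_0$ in $L^2_{\rm{loc}}$, $\tau_i\rightharpoonup \tau_0$ in $L^2(\Omega,\mathbb R^3)$, and
$\tfrac12|\nabla d_i|^2\,dx\rightharpoonup \mu=\tfrac12|\nabla d_0|^2\,dx+\nu$ as Radon measures, with $\nu\ge 0$. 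Since $d_i\to d_0$ a.e., $d_0\in H^1(\Omega,\mathbb S^2)$ and $d_0^3\ge -1+a$ a.e.\ in $\Omega$. Taking the weak limit in (\ref{suitable2}) shows that $d_0$ is a suitable approximated harmonic map with tension $\tau_0$, and lower semicontinuity gives $E(d_0)\le L_1$, $\|\tau_0\|_{L^2}\le L_2$.

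Next I would set up the energy concentration set
\[
\Sigma:=\bigcap_{0<r<{\rm{d}}(x_0,\partial\Omega)}\Big\{x_0\in \Omega:\ \liminf_{i\to\infty}\Psi_r(d_i;x_0)\ge \delta_0^2\Big\},
\]
with $\delta_0$ from Lemma \ref{small_holder1}. Using Lemma \ref{HMI1} together with the uniform $L^2$-bound on $\tau_i$, one obtains an almost-monotonicity of $r^{-1}\int_{B_r(x_0)}\tfrac12|\nabla d_i|^2$ (with error $O(r^{1/2}L_2)$), so that the density $\Theta^1(\mu,x)=\lim_{r\to 0}r^{-1}\mu(B_r(x))$ exists everywhere, is upper semicontinuous, and $\Sigma=\{\Theta^1(\mu,\cdot)\ge \delta_0^2\}$ is closed. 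A Vitali-type covering argument, as in Claim 6.1, yields $H^1(\Sigma\cap K)\le C(K,L_1,L_2)$, and Preiss' theorem gives $1$-rectifiability of $\Sigma$. For $x_0\notin\Sigma$, the smallness hypothesis (\ref{small1}) holds along the subsequence at some scale $r_1>0$, so Lemma \ref{small_holder1} gives uniform $C^{1/2}$ and $W^{2,2}$ bounds on $d_i$ near $x_0$; hence $d_i\to d_0$ in $H^1_{\rm{loc}}(\Omega\setminus\Sigma)$, $\nu\equiv 0$ on $\Omega\setminus\Sigma$, and $d_0\in W^{2,2}_{\rm{loc}}\cap C^{1/2}_{\rm{loc}}(\Omega\setminus\Sigma)$. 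A hole-filling argument as in Claim 6.3 shows $\Sigma={\rm{supp}}(\nu)\cup {\rm{sing}}(d_0)$, and Federer--Ziemer plus the $1$-rectifiability of $\Sigma$ give $\nu=\Theta^1(\nu,\cdot)H^1{\rm{L}}\Sigma$ with $\delta_0^2\le\Theta^1(\nu,x)\le C$ at $H^1$-a.e.\ $x\in\Sigma$.

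The decisive, and hardest, step is to rule out $\Sigma\neq\emptyset$; this is the exact analogue of Claim 6.5 and uses the range constraint $d^3\ge -1+a$ crucially. Assuming $H^1(\Sigma)>0$, I would pick $x_0\in\Sigma$ where $\Theta^1(\nu,\cdot)$ is $H^1$-approximately continuous, $\Sigma$ has a $1$-dimensional tangent line which we may take to be $\{(0,0,x_3)\}$, and $\Theta^1(|\nabla d_0|^2,x_0)=0$. Rescale by $\widetilde d_i(x)=d_i(x_0+r_ix)$, $\widetilde\tau_i(x)=r_i^2\tau_i(x_0+r_ix)$, with $r_i\downarrow 0$ chosen so that the energy measures of $\widetilde d_i$ converge to the tangent measure $\Theta^1(\nu,x_0)\,H^1{\rm{L}}\{(0,0,x_3)\}$. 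The monotonicity inequality (\ref{HMI2}), applied to two base points on the tangent line, forces
\[
\lim_{i\to\infty}\int_{B_1^2\times[-1,1]}\Big|\frac{\partial \widetilde d_i}{\partial x_3}\Big|^2=0,
\]
while $\|\widetilde\tau_i\|_{L^2}\to 0$. Selecting a good slice $\{x_3=t\}\subset[-\tfrac12,\tfrac12]$ via the Hardy--Littlewood maximal function (as in (\ref{maximal1})--(\ref{slice})), and rescaling again around a concentration point in that slice at the scale where the $2$-dimensional energy first reaches $\delta_0^2/C(3)$, one produces via Lemma \ref{small_holder1} a smooth limiting harmonic map $\widehat d:\mathbb R^2\to\mathbb S^2$, independent of $x_3$, with $0<E(\widehat d)<\infty$. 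The range constraint passes to the limit: $\widehat d^3\ge -1+a$, so $\widehat d$ misses a neighborhood of the south pole. Consequently $\deg(\widehat d)=0$, which contradicts the fact that every nontrivial finite-energy harmonic map $\mathbb R^2\to\mathbb S^2$ has nonzero degree. This forces $\Sigma=\emptyset$, hence $\nu\equiv 0$ and $d_i\to d_0$ in $H^1_{\rm{loc}}(\Omega)$; the $W^{2,2}_{\rm{loc}}$ regularity of $d_0$ then follows from Lemma \ref{small_holder1}. The main obstacle is the good-slice selection and the two-stage blow-up producing a bona fide nontrivial $2$D harmonic map, which is exactly where the upper-hemisphere-type range hypothesis $d^3\ge -1+a$ becomes indispensable.
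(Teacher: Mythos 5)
Your proposal tracks the paper's argument closely for the main part (concentration set, rectifiability, first blow-up using the range constraint to rule out a nontrivial $2$D harmonic sphere when $H^1(\Sigma)>0$), but there are two points that deserve attention, one minor and one a genuine gap.

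The minor one: you claim early on that ``taking the weak limit in (\ref{suitable2}) shows that $d_0$ is a suitable approximated harmonic map.'' The identity (\ref{suitable2}) contains the quadratic term $\langle\partial_i d, \partial_j d\rangle\,\partial_j Y^i$, and weak $H^1$-convergence alone does not let you pass this to the limit. The paper therefore derives $d_0\in\mathbf{Y}$ only \emph{after} establishing strong $H^1_{\rm loc}$-convergence, by noting that $\mathbf{Y}(L_1,L_2,a;\Omega)$ is closed under $H^1_{\rm loc}$-convergence. Your conclusion is correct, but the logical order is important.

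The genuine gap is in the final paragraph. Your contradiction argument assumes $H^1(\Sigma)>0$, picks a generic $x_0\in\Sigma$ with a tangent line and approximate continuity of $\Theta^1(\nu,\cdot)$, and derives a contradiction. That argument only shows $H^1(\Sigma)=0$, which (combined with $\nu=\Theta^1(\nu,\cdot)H^1{\rm L}\Sigma$) gives $\nu\equiv 0$ and strong $H^1_{\rm loc}$-convergence; but it does \emph{not} show $\Sigma=\emptyset$. After this step $\Sigma$ could still be a nonempty set of $H^1$-measure zero consisting of points where $\Theta^1(\tfrac12|\nabla d_0|^2\,dx,x)\ge\delta_0^2$, i.e.\ possible singular points of $d_0$ itself, at which Lemma \ref{small_holder1} is not applicable and hence $W^{2,2}$-regularity is not yet established. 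The paper closes this gap with a \emph{second}, separate blow-up: supposing $\Theta^1(|\nabla d_0|^2\,dx,x_0)>0$ for some $x_0$, rescale $\widetilde d_i(x)=d_0(x_0+\lambda_i x)\in\mathbf{Y}(L_1, L_2\sqrt{\lambda_i}, a; B_2)$; the already-proved $H^1_{\rm loc}$-precompactness gives a nonconstant limit $\widetilde d$, and the monotonicity formula (\ref{HMI2}) (with vanishing tension in the limit) forces $\widetilde d$ to be a degree-zero homogeneous harmonic map into $\mathbb S^2_{-1+a}$, i.e.\ a nonconstant smooth harmonic map $\mathbb S^2\to\mathbb S^2_{-1+a}$, which is impossible. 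This additional blow-up, using the compactness just proved together with the range restriction once more, is what yields $\Theta^1(|\nabla d_0|^2\,dx, x)=0$ everywhere and hence $d_0\in W^{2,2}_{\rm loc}(\Omega,\mathbb S^2)$. You should add this step.
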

\begin{proof} The proof is almost identical to that of Theorem \ref{precomp1}. Here we only sketch it.
Suppose that $d_i\rightharpoonup d_0$ in $H^1(\Omega)$, but not strongly in $H^1_{\rm{loc}}(\Omega)$.
Then there exists a Radon measure $\nu\ge 0$ ($\nu\not\equiv 0$) such that
$$\frac12|\nabla d_i|^2\,dx\rightharpoonup \mu:=\frac12|\nabla d_0|^2\,dx+\nu$$
as convergence of Radon measures as $i\rightarrow\infty$.
Define the concentration set
$$\Sigma=\bigcap_{0<r<{\rm{d}}(x,\partial\Omega)}\Big\{x\in\Omega: \ \liminf_{i\rightarrow\infty}\Psi_r(d_i, x)
\ge\delta_0^2\Big\},$$
where $\Psi_r(d_i,x)$ is given by (\ref{energy_density}) and $\delta_0>0$ is the constant given by
Lemma \ref{small_holder1}.

Since $\nu(\Omega)>0$,
with the help of Lemma \ref{HMI1} and Lemma \ref{small_holder1}, the same argument as in Theorem
\ref{precomp1} yields
\begin{itemize}
\item[(i)] $\Sigma$ is a $1$-dimensional rectifiable, closed set, with $H^1(\Sigma)>0$.
\item[(ii)] there exists $C>0$ depending on $L_1$ and $L_2$ such that
$$\Sigma=\Big\{x\in\Omega: \ \delta_0^2\le\Theta^1(\mu,x)\le C\Big\},$$
where $\displaystyle\Theta^1(\mu,x)=\lim_{r\rightarrow 0}\Theta^1_r(\mu,x)\big(=\lim_{r\rightarrow 0}\frac{1}{r}\mu(B_r(x))\big)$
is the $1$-dimensional density of $\mu$ at $x$.
\item[(iii)] $\displaystyle\Sigma={\rm{supp}}(\nu)\cup{\rm{sing}}(d_0)$.
\item[(iv)] For $H^1$ a.e. $x\in\Sigma$, $$\displaystyle\Theta^1\big(|\nabla d_0|^2\,dx, x\big)
=\lim_{r\rightarrow 0}\frac1{r}\int_{B_r(x_0)}|\nabla d_0|^2=0,$$
and $\displaystyle\Theta^1(\nu,x)=\lim_{r\rightarrow 0}\frac{1}{r}\nu\big(B_r(x)\big)$ exists and equals
to $\Theta^1(\mu,x)$.
\end{itemize}
As in claim 6.5, we can choose a generic point $x_0\in\Sigma$ such that
\begin{itemize}
\item[a)] $\displaystyle\Theta^1(|\nabla d_0|^2\,dx, x_0)=0$.
\item[b)] $\Theta^1(\nu,\cdot)$ is $H^1$-approximately continuous at $x_0$ and
$\delta_0^2\le\Theta^1(\nu, x_0)\le C$.
\item[c)] $\Sigma$ has $1$-dimensional tangent plane $T_{x_0}\Sigma$ at $x_0$.
\end{itemize}
Then we perform the blow-up procedure of $d_i$ at $x_0$  exactly as what we did in claim 6.5 (we leave the detail to interested
readers). As a consequence, we will obtain a harmonic map $\omega: \mathbb R^2\to\mathbb S^2$ such
that
$$0<\int_{\mathbb R^2}|\nabla\omega|^2<+\infty, \ \omega^3(x)\ge -1+a, \ \forall \ x\in\mathbb R^2.$$
This is impossible. Hence $\nu\equiv 0$ and $d_i\rightarrow d_0$ in $H^1_{\rm{loc}}(\Omega)$. Since
${\bf Y}(L_1, L_2, a;\Omega)$ is closed under $H^1_{\rm{loc}}(\Omega)$-convergence, we conclude
that $d_0\in {\bf Y}(L_1, L_2, a; \Omega)$.

Now we want to show that
\begin{equation}\label{zero_density}
\Theta^1\big(|\nabla d_0|^2\,dx, x\big)=0, \ \forall\ x\in\Omega.
\end{equation}
For, otherwise,
there exists $x_0\in\Omega$ and $\lambda_i\rightarrow 0$ such that
$\widetilde d_i(x)=d_0(x_0+\lambda_i x): B_2\to \mathbb S^2_{-1+a}$
satisfies
$$\frac12\int_{B_2}|\nabla\widetilde d_i|^2=\frac1{2\lambda_i}\int_{B_{2\lambda_i}(x_0)}|\nabla d_0|^2
\rightarrow\Theta^1\big(|\nabla d_0|^2\,dx, x_0\big)>0 \ \ {\rm{as}}\ \ i\rightarrow\infty.$$
It is easy to see that $\{\widetilde d_i\}\subset {\bf Y}(L_1, L_2\sqrt{\lambda_i}, a; B_2)$. The compactness of ${\bf Y}(L_1, L_2\sqrt{\lambda_i}, a;
B_2)$
implies that there exists a harmonic map $\widetilde d\in H^1(B_2, \mathbb S^2)$, with $\widetilde d^3(x)\ge -1+a$
for $x\in B_2$,  such that $\widetilde d_i\rightarrow \widetilde d$ in $H^1_{\rm{loc}}(B_2)$. Moreover, it follows from the monotonicity
inequality (\ref{HMI2}) for $\widetilde d_i$ that
$$\int_{B_\frac32}\big|\frac{\partial\widetilde d}{\partial |x|}\big|^2=0.$$
Hence $\widetilde d(x)=\widetilde d\big(\frac{x}{|x|}\big): \mathbb S^2\to \mathbb S^2_{-1+a}$ is a nontrivial harmonic map, which is impossible. This proves (\ref{zero_density}) and hence Lemma \ref{small_holder1}
yields $d_0\in W^{2,2}_{\rm{loc}}(\Omega,\mathbb S^2)$.
\end{proof}

\medskip
\section{Global weak solutions of (\ref{LLF}) and proofs of Theorem \ref{existence} and Theorem \ref{compactness0}}

In this section, we will utilize the existence of global solutions to
the Ginzburg-Landau approximation (\ref{MLLF}) of the nematic liquid crystal flow (\ref{LLF}) and the compactness
Theorem \ref{precomp1} to show the existence of global weak solutions to (\ref{LLF}).

For $\epsilon>0$, consider the modified Ginzburg-Landau approximation of (\ref{LLF}):
\begin{equation}\label{MLLF}
\begin{cases}
\partial_t u+u\cdot\nabla u-\Delta u+\nabla P=-\nabla\cdot(\nabla d\odot \nabla d) & {\rm{in}}\ \ \Omega\times (0,+\infty),\\
\nabla\cdot u = 0 & {\rm{in}}\ \ \Omega\times (0,+\infty),\\
\partial_t d+u\cdot\nabla d=\Delta d+\frac{1}{\epsilon^2}(1-|d|^2) d & {\rm{in}}\ \ \Omega\times (0,+\infty).
\end{cases}
\end{equation}
First, we have the following result on the existence of global solutions to (\ref{MLLF}).
\begin{thm} \label{approx_solution}
For any $\epsilon>0$, $u_0\in {\bf H}$, and $d_0\in H^1(\Omega,\mathbb S^2)$,
there exists a global weak solution $(u_\epsilon,d_\epsilon):\Omega\times [0,+\infty)\to\mathbb R^3\times \mathbb R^3$
of the equation (\ref{MLLF}) under the initial and boundary condition (\ref{IBC}) that satisfies
\begin{itemize}
\item[(i)] $|d_\epsilon|\le 1$ a.e. $(x,t)\in\Omega\times [0,+\infty)$.
\item[(ii)] the global energy inequality: there exists a measure zero set $E\subset (0,+\infty)$ such that
for any $0\le t_1, t_2\in \mathbb R\setminus E$ with $t_1<t_2$,
\begin{eqnarray}\label{global_energy_ineq1}
&&\int_\Omega \big(|u_\epsilon|^2+|\nabla d_\epsilon|^2+\frac{1}{2\epsilon^2}(1-|d_\epsilon|^2)^2\big)(t_2)
+2\int_{t_1}^{t_2}\int_\Omega \Big(|\nabla u_\epsilon|^2+\big|\Delta d_\epsilon+\frac{1}{\epsilon^2}(1-|d_\epsilon|^2)d_\epsilon\big|^2\Big)\nonumber\\
&&\le \int_\Omega \big(|u_\epsilon|^2+|\nabla d_\epsilon|^2+\frac{1}{2\epsilon^2}(1-|d_\epsilon|^2)^2\big)(t_1).
\end{eqnarray}
\item [(iii)] If, in addition, $d_0^3(x)\ge 0$ a.e. $x\in\Omega$, then $d^3_\epsilon(x,t)\ge 0$ a.e. $(x,t)\in\Omega\times (0,+\infty)$.
\end{itemize}
\end{thm}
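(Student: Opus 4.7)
The plan is to construct $(u_\epsilon, d_\epsilon)$ as a weak limit of Galerkin approximations, where the Ginzburg-Landau penalty term renders the $d$-equation a sub-critical semilinear parabolic equation for each fixed $\epsilon > 0$. Concretely, I would take an orthonormal basis $\{\phi_k\}$ of ${\bf H}$ consisting of eigenfunctions of the Stokes operator (with Dirichlet conditions on $\partial\Omega$), project the $u$-equation onto ${\rm span}\{\phi_1,\dots,\phi_n\}$ to get a finite system of ODEs for the coefficients, and couple it with the full nonlinear $d$-equation treated as a semilinear parabolic PDE with prescribed drift $u^n$. A Schauder fixed-point argument on a short time interval yields local solutions $(u_\epsilon^n, d_\epsilon^n)$, extended globally once uniform energy bounds are established. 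This is essentially the scheme of Lin--Liu \cite{LL1, LL2}.

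The core a priori estimate is obtained by testing the Navier--Stokes equation against $u_\epsilon^n$ and the $d$-equation against $-\Delta d_\epsilon^n - \frac{1}{\epsilon^2}(1-|d_\epsilon^n|^2)d_\epsilon^n$. Two integrations by parts (using $\nabla\cdot u_\epsilon^n = 0$) show that the coupling term
\[
\int_\Omega (\nabla d_\epsilon^n \odot \nabla d_\epsilon^n) : \nabla u_\epsilon^n
\]
appearing in both identities cancels, and the transport contribution $\frac{1}{\epsilon^2}\int u_\epsilon^n\cdot\nabla d_\epsilon^n\cdot d_\epsilon^n(1-|d_\epsilon^n|^2)$ rearranges as a null divergence. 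This yields the exact Galerkin energy equality
\[
\tfrac{d}{dt}\int_\Omega\!\bigl(|u_\epsilon^n|^2+|\nabla d_\epsilon^n|^2+\tfrac{1}{2\epsilon^2}(1-|d_\epsilon^n|^2)^2\bigr) + 2\!\int_\Omega\!\bigl(|\nabla u_\epsilon^n|^2+\bigl|\Delta d_\epsilon^n+\tfrac{1}{\epsilon^2}(1-|d_\epsilon^n|^2)d_\epsilon^n\bigr|^2\bigr)=0,
\]
providing uniform bounds of $u_\epsilon^n$ in $L^\infty_t L^2_x\cap L^2_t H^1_x$, of $d_\epsilon^n$ in $L^\infty_t H^1_x$, and of $\frac{1}{\epsilon}(1-|d_\epsilon^n|^2)$ in $L^\infty_t L^2_x$.

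To pass to the limit $n\to\infty$, I would combine these estimates with bounds on $\partial_t u_\epsilon^n$ in $L^{4/3}_t H^{-1}_x$ (coming from the $u$-equation together with Sobolev embeddings in dimension three) and on $\partial_t d_\epsilon^n$ in $L^2_t L^{3/2}_x$; the Aubin--Lions lemma then gives strong $L^2_{\rm{loc}}$-convergence of both $u_\epsilon^n$ and $d_\epsilon^n$, sufficient to identify the nonlinear terms $u_\epsilon^n\cdot\nabla u_\epsilon^n$, $u_\epsilon^n\cdot\nabla d_\epsilon^n$, $\nabla d_\epsilon^n\odot\nabla d_\epsilon^n$, and the polynomial penalty $\frac{1}{\epsilon^2}(1-|d_\epsilon^n|^2)d_\epsilon^n$ in the distributional sense. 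Lower semicontinuity of the dissipation then upgrades the Galerkin energy equality to the global energy inequality (\ref{global_energy_ineq1}) for $L^1$-a.e. pairs $t_1<t_2$.

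Finally, properties (i) and (iii) are immediate consequences of the two maximum-principle lemmas already established: since $|d_0|\le 1$, Lemma \ref{MP1} yields $|d_\epsilon|\le 1$ a.e., and if additionally $d_0^3\ge 0$, Lemma \ref{MP2} gives $d_\epsilon^3\ge 0$ a.e. (the two lemmas apply because $u_\epsilon\in L^2_t{\bf J}$ and the bound on the penalty term in $L^2$ is available). The main obstacle I foresee is not in the a priori estimates—these are standard at fixed $\epsilon$—but rather in ensuring that the boundary condition $d_\epsilon=d_0$ on $\partial\Omega$ is preserved under the Galerkin approximation in a manner compatible with the maximum-principle arguments; this is handled by extending $d_0$ appropriately and choosing a basis for the $d$-equation that respects the boundary data, so that the Galerkin solutions themselves satisfy $|d_\epsilon^n|\le 1$ and $(d_\epsilon^n)^3\ge 0$ inherited from the hypotheses on $d_0$.
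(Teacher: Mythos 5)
Your proposal matches the paper's proof, which itself just points to the Galerkin scheme and energy method of Lin--Liu \cite{LL1}, \S2, and cites Lemmas \ref{MP1} and \ref{MP2} for (i) and (iii); your energy-identity derivation (cancellation of the stress term and of the transported Ginzburg--Landau potential using $\nabla\cdot u^n_\epsilon=0$) is the correct computation. The ``obstacle'' you raise in the last paragraph is actually a non-issue given the scheme you yourself describe: since the $d$-equation is solved as a genuine semilinear parabolic PDE with prescribed drift $u^n_\epsilon$ (not Galerkin-projected), the boundary condition $d^n_\epsilon=d_0$ is imposed exactly and Lemmas \ref{MP1}--\ref{MP2} apply at each Galerkin level; alternatively, one can apply them directly to the limit $(u_\epsilon,d_\epsilon)$, as the paper does, since $u_\epsilon\in L^2_t\mathbf{J}$ and $d_\epsilon$ solves (\ref{TGL}) weakly.
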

\begin{proof} The existence is based on the Galerkin method and the energy method. The reader can refer to
the proof presented by \cite{LL1} \S2.
The properties (i) and (iii) follow from Lemma \ref{MP1} and Lemma \ref{MP2}.
\end{proof}
Now we would like to study the convergence of sequence of solutions $(u_\epsilon, d_\epsilon)$ constructed by
Theorem \ref{approx_solution} as $\epsilon$ tends to zero.

\medskip
\noindent{\bf Proof of Theorem \ref{existence}}. Since $|d_0|=1$ and $d_0^3\ge 0$, it follows from Theorem \ref{approx_solution}
that for $\epsilon>0$,  there exists global weak solutions $(u_\epsilon, d_\epsilon):\Omega\times [0,+\infty)\to\mathbb R^3\times\mathbb R^3$ of (\ref{MLLF}) that satisfies all the three properties (i), (ii), and (iii).
It follows from (\ref{global_energy_ineq1}) that
\begin{eqnarray}\label{global_energy_bound}
&&\sup_{\epsilon>0}\sup_{0<t<+\infty}\int_\Omega \Big[|u_\epsilon|^2+|\nabla d_\epsilon|^2+\frac{1}{2\epsilon^2}(1-|d_\epsilon|^2)^2\Big](t)+2\int_0^{\infty}\int_\Omega
\Big[|\nabla u_\epsilon|^2+\big|\Delta d_\epsilon+\frac{1}{\epsilon^2}(1-|d_\epsilon|^2)d_\epsilon\big|^2\Big]\nonumber\\
&&\le \int_\Omega \big(|u_0|^2+|\nabla d_0|^2\big):=C_0
\end{eqnarray}
It is not hard to see from the equation (\ref{MLLF}) and the  energy inequality (\ref{global_energy_bound}) that
there exists $p>3$ such that
for any $0<T<+\infty$,
\begin{eqnarray}\label{t-uniform}
\sup_{\epsilon>0}\Big[\|\partial_t u_\epsilon\|_{L^\frac54(\Omega\times [0,T])+L^2([0,T], H^{-1}(\Omega))+L^2([0,T], W^{-1,p}(\Omega))}+\|\partial_t d_\epsilon\|_{L^\frac54(\Omega\times [0,T])}\Big]<+\infty.
\end{eqnarray}
Hence, by Aubin-Lions' Lemma \cite{temam} we have that after taking possible subsequences,  there exists $u\in L^\infty_tL^2_x\cap L^2_tH^1_x(\Omega\times
\mathbb R_+,\mathbb R^3)$ and $d\in L^\infty_tH^1_x(\Omega\times\mathbb R_+, \mathbb S^2)$ such that
\begin{equation}\label{weak_strong}
(u_\epsilon, d_\epsilon)\rightarrow (u, d) \ {\rm{in}}\ L^2_{\rm{loc}}(\Omega\times \mathbb R_+),
\ (\nabla u_\epsilon,\nabla d_\epsilon)\rightharpoonup (\nabla u, \nabla d) \ {\rm{in}}\ L^2_tL^2_x(\Omega\times \mathbb R_+).
\end{equation}
It follows from (\ref{global_energy_bound}) and Fatou's lemma that
\begin{equation}\label{fatou}
\int_0^{\infty}\liminf_{\epsilon\rightarrow 0}\int_\Omega
\Big|\Delta d_\epsilon+\frac{1}{\epsilon^2}(1-|d_\epsilon|^2)d_\epsilon\Big|^2
\le C_0.
\end{equation}
For $\Lambda>>1$, define
$$G_\Lambda^T:=\Big\{t\in [0,T]:
\ \liminf_{\epsilon\rightarrow 0}\int_\Omega
\Big|\Delta d_\epsilon+\frac{1}{\epsilon^2}(1-|d_\epsilon|^2)d_\epsilon\Big|^2(t)\le \Lambda\Big\},$$
and
$$B_\Lambda^T:=[0,T]\setminus G_\Lambda^T=\Big\{t\in [0,T]:
\ \liminf_{\epsilon\rightarrow 0}\int_\Omega
\Big|\Delta d_\epsilon+\frac{1}{\epsilon^2}(1-|d_\epsilon|^2)d_\epsilon\Big|^2(t)>\Lambda\Big\}.$$
Then by the weak $L^1$-estimate, we have
\begin{equation}\label{bad_set1}
\Big|B_\Lambda^T\Big|\le \frac{C_0}{\Lambda}.
\end{equation}
Now we have

\noindent{\bf Claim 8.1}. For any $t\in G_\Lambda^T$,  set $\tau_\epsilon(t):=\big(\Delta d_\epsilon+\frac{1}{\epsilon^2}(1-|d_\epsilon|^2)d_\epsilon\big)(t)$. Then there exists $\tau(t)\in L^2(\Omega,\mathbb R^3)$
such that, after passing to possible subsequences, $\tau_\epsilon(t)\rightharpoonup \tau(t)$ in $L^2(\Omega)$, and
\begin{equation}\label{slice_strong1}
d_\epsilon(t)\rightarrow d(t) \ {\rm{in}}\ H^1_{\rm{loc}}(\Omega),
e_\epsilon(d_\epsilon(t))\,dx\rightharpoonup \frac12|\nabla d(t)|^2\,dx
\ {\rm{as\ convergence\ of\ Radon\ measures\ in}}\ \Omega.
\end{equation}
In particular, $d(t)\in H^1(\Omega,\mathbb S^2_+)$ is a suitable approximated harmonic map,
with $L^2$-tension field $\tau(t)$.

\smallskip
Since $t\in G_\Lambda^T$, it follows from the definition that $\displaystyle\liminf_{\epsilon\rightarrow 0}\big\|\tau_\epsilon(t)\big\|_{L^2(\Omega)}\le\Lambda$ and hence  there exists $\tau(t)\in
L^2(\Omega,\mathbb R^3)$ such that, after passing to possible subsequences, $\tau_\epsilon(t)\rightharpoonup \tau(t)$ in $L^2(\Omega)$.
To show (\ref{slice_strong1}),  recall that by (\ref{global_energy_bound}) we can assume, after passing to possible subsequences,
$d_\epsilon(t)\rightharpoonup d(t)$ in $H^1(\Omega)$ and there exists a nonnegative
Radon measure $\nu_t$ in $\Omega$ such that
$$e_\epsilon(d_\epsilon(t))\,dx\rightharpoonup \frac12|\nabla d(t)|^2\,dx+\nu_t$$
as convergence of Radon measures in $\Omega$.  It is easy to check from the definition of $G_\Lambda^T$
that for any $t\in G_\Lambda^T$,
$\{d_\epsilon(t)\}\subset {\bf X}(C_0, \Lambda, a; \Omega)$ with $a=1$. Hence Theorem
\ref{precomp1} implies that
\begin{equation}\label{slice-conv}
\nu_t\equiv 0, \  \  d_\epsilon(t)\rightarrow d(t) \ {\rm{in}}\ H^1_{\rm{loc}}(\Omega),
\ {\rm{and}}\  \  \frac{1}{\epsilon^2}\big(1-|d_\epsilon(t)|^2\big)^2
\rightarrow 0 \ {\rm{in}}\ L^1_{\rm{loc}}(\Omega).
\end{equation}
Hence $d(t)\in H^1(\Omega,\mathbb S^2_+)$ is
an approximated harmonic map with tension field $\tau(t)\in L^2(\Omega,\mathbb R^3)$.
To see $d(t)$ is a suitable approximated harmonic map, observe that the same calculations
as in Lemma 3.1 apply to $Y\cdot\nabla d_\epsilon(t)$ for any $Y\in C^\infty_0(\Omega,\mathbb R^3)$. Hence we obtain
\begin{equation}\label{e-stationary}
\int_\Omega \Big(\big\langle \frac{\partial d_\epsilon(t)}{\partial x_i},
\frac{\partial d_\epsilon(t)}{\partial x_j}\big\rangle \frac{\partial Y^i}{\partial x_j}
-\big(\frac12|\nabla d_\epsilon(t)|^2+\frac{1}{4\epsilon^2}(1-|d_\epsilon(t)|^2)^2\big)
{\rm{div}}Y+\big\langle \tau_\epsilon(t), Y\cdot\nabla d_\epsilon(t)\big\rangle\Big)=0.
\end{equation}
After sending $\epsilon\to 0$, (\ref{e-stationary}), combined with (\ref{slice-conv}),
implies that $d(t)$ satisfies the identity (\ref{suitable2}) and hence $d(t)$ is a suitable
approximated harmonic map.  The Claim 8.1 is proven.

\smallskip
\noindent{\bf Claim 8.2}.  For any subdomain $\widetilde\Omega\subset\subset\Omega$,
it holds that
\begin{equation}\label{good-conv}
\lim_{\epsilon\rightarrow 0}
\int_{\widetilde\Omega\times G_\Lambda^T}
|\nabla(d_\epsilon-d)|^2
=0.
\end{equation}
We prove (\ref{good-conv}) by contradiction. Suppose (\ref{good-conv}) were false.
Then there exist a subdomain $\widetilde\Omega\subset\subset\Omega$, $\delta_0>0$, and $\epsilon_i\rightarrow 0$ such that
\begin{equation}\label{gap1}\int_{\widetilde\Omega\times G_\Lambda^T}
|\nabla(d_{\epsilon_i}-d)|^2\ge \delta_0.
\end{equation}
Note that from (\ref{weak_strong}) we have
\begin{equation}
\label{no-gap}
\lim_{\epsilon_i\rightarrow 0}\int_{\Omega\times G_\Lambda^T}
|d_{\epsilon_i}-d|^2=0.
\end{equation}
By Fubini's theorem, (\ref{gap1}), and (\ref{no-gap}), we have that there exists
$t_i\in G_\Lambda^T$ such that
\begin{equation}\label{no-gap1}
\lim_{\epsilon_i\rightarrow 0}\int_{\Omega}|d_{\epsilon_i}(t_i)
-d(t_i)|^2=0,
\end{equation}
and
\begin{equation}\label{gap2}
\int_{\widetilde\Omega}\big|\nabla(d_{\epsilon_i}(t_i)
-d(t_i))\big|^2\ge\frac{2\delta_0}{T}.
\end{equation}
It is easy to see that $\big\{d_{\epsilon_i}(t_i)\big\}\subset {\bf X}(C_0, \Lambda, 1; \Omega)$
and $\big\{d(t_i)\big\}\subset {\bf Y}(C_0, \Lambda, 1; \Omega)$. It follows
from Theorem 6.1 and Theorem 7.1 that there exist $d_1, d_2\in {\bf Y}(C_0, \Lambda, 1; \Omega)$ such that
$$d_{\epsilon_i}(t_i)\rightarrow d_1
\ {\rm{and}}\ d(t_i)\rightarrow d_2\ \ {\rm{in}}\ \ L^2(\Omega)\cap H^1(\widetilde\Omega).
$$
This and (\ref{gap2}) imply that
\begin{equation}\label{gap3}
\int_{\widetilde\Omega}\big|\nabla(d_1
-d_2)\big|^2\ge\frac{2\delta_0}{T}.
\end{equation}
On the other hand, from (\ref{no-gap1}), we have that
\begin{equation}\label{no-gap2}
\int_{\Omega}|d_1
-d_2|^2=0.
\end{equation}
It is clear that (\ref{gap3}) contradicts (\ref{no-gap2}). Hence the Claim 8.2 is proven.

Now we can use argument as in Lemma \ref{small_holder} Claim 4.4 to conclude that
\begin{equation}\label{xt-strong3}
\int_{\widetilde\Omega\times G_\Lambda^T}
\frac{1}{\epsilon^2}(1-|d_\epsilon|^2)^2\rightarrow 0.
\end{equation}
Combining (\ref{good-conv}) and (\ref{xt-strong3}) yields
\begin{equation}\label{xt-strong4}
\Big\|d_\epsilon-d\Big\|_{L^2_tH^1_x\big(\widetilde\Omega\times G_\Lambda^T\big)}^2
+\int_{\widetilde\Omega\times G_\Lambda^T}\frac{1}{\epsilon^2}(1-|d_\epsilon|^2)^2\rightarrow 0.
\end{equation}
On the other hand, it follows from (\ref{global_energy_bound}) and (\ref{bad_set1}) that
\begin{equation}\label{xt-strong5}
\Big\|d_\epsilon-d\Big\|_{L^2H^1_x\big(\Omega\times B_\Lambda^T\big)}^2
+\int_{\widetilde\Omega\times B_\Lambda^T}\frac{1}{\epsilon^2}(1-|d_\epsilon|^2)^2
\le \Big(2\sup_{t>0}\int_\Omega e_\epsilon(d_\epsilon)(t)\Big)\Big|B_\Lambda^T\Big|\le C\Lambda^{-1}.
\end{equation}
Hence we have
\begin{equation}\label{xt-strong6}
\lim_{\epsilon\rightarrow 0}\Big[\big\|d_\epsilon-d\big\|_{L^2H^1_x(\widetilde\Omega\times [0,T])}^2
+\int_{\widetilde\Omega\times [0,T]}\frac{1}{\epsilon^2}(1-|d_\epsilon|^2)^2\Big]
\le C\Lambda^{-1}.
\end{equation}
Since $\Lambda>1$ can be chosen arbitrarily large, this implies that $\nabla d_\epsilon\rightarrow \nabla d$ strongly
in $L^2_{\rm{loc}}(\Omega\times [0,T])$ and $\displaystyle\frac{1}{\epsilon^2}(1-|d_\epsilon|^2)^2\rightarrow 0$
in $L^1_{\rm{loc}}(\Omega\times [0,T])$.

Since $(u_\epsilon, d_\epsilon)$ solves the equation (\ref{MLLF}) along with (\ref{IBC}), it is standard
that by utilizing (\ref{xt-strong6}) and (\ref{weak_strong}) we can show that $(u,d)$ is a weak solution of the equation
(\ref{LLF}) and (\ref{IBC}). The global energy inequality (\ref{global_energy_ineq}) for $(u,d)$ follows
from (\ref{global_energy_ineq1}), with $t_1=0$ and $t=t_2>0$, by sending $\epsilon$ to $0$, with the help
of the lower semicontinuity and the observation that
$$\Delta d_\epsilon+\frac{1}{\epsilon^2}(1-|d_\epsilon|^2)d_\epsilon
=\partial_t d_\epsilon+u_\epsilon\cdot\nabla d_\epsilon
\rightharpoonup \partial_t d+u\cdot\nabla d \ \ {\rm{in}}\ \ L^2(\Omega\times [0,T]).
$$
This completes the proof of Theorem \ref{existence}. \hfill\qed

\bigskip
\noindent{\bf Proof of Theorem \ref{compactness0}}. The proof is similar to that of Theorem \ref{existence}. Here we only sketch it.
First, it follows from the equation (\ref{LLF}) and the condition (\ref{energy_bound}) that there exists $p>2$ such that for
$0<T<+\infty$,
\begin{equation}\label{t-uniform1}
\sup_{k}\Big[\|\partial_tu_k\|_{L^\frac54(\Omega\times [0,T])+L^2([0,T], H^{-1}(\Omega))+L^2([0,T], W^{-1,p}(\Omega))}
+\|\partial_t d_k\|_{L^\frac54(\Omega\times [0,T])}\Big]<+\infty.
\end{equation}
Hence, by Aubin-Lions' Lemma \cite{temam} we have that after taking to possible subsequences,  there exists $u\in L^\infty_tL^2_x\cap L^2_tH^1_x(\Omega\times
[0,T],\mathbb R^3)$ and $d\in L^\infty_tH^1_x(\Omega\times [0,T], \mathbb S^2)$ such that
\begin{equation}\label{weak_strong1}
(u_k, d_k)\rightarrow (u, d) \ {\rm{in}}\ L^2_{\rm{loc}}(\Omega\times [0,T]),
\ (\nabla u_k,\nabla d_k)\rightharpoonup (\nabla u, \nabla d) \ {\rm{in}}\ L^2_tL^2_x(\Omega\times [0,T]).
\end{equation}
It follows from (\ref{energy_bound}) and Fatou's lemma that
\begin{equation}\label{fatou}
\int_0^{\infty}\liminf_{k\rightarrow \infty}\int_\Omega
\big|\Delta d_k+|\nabla d_k|^2d_k\big|^2
\le C_0.
\end{equation}
For $\Lambda>>1$, define
$$G_\Lambda^T:=\Big\{t\in [0,T]:
\ \liminf_{k\rightarrow \infty}\int_\Omega
\big|\Delta d_k+|\nabla d_k|^2d_k\big|^2(t)\le \Lambda\Big\},$$
and
$$B_\Lambda^T:=[0,T]\setminus G_\Lambda^T=\Big\{t\in [0,T]:
\ \liminf_{k\rightarrow \infty}\int_\Omega
\big|\Delta d_k+|\nabla d_k|^2d_k\big|^2(t)>\Lambda\Big\}.$$
Then by the weak $L^1$-estimate, we have
\begin{equation}\label{bad_set}
\Big|B_\Lambda^T\Big|\le \frac{C_0}{\Lambda}.
\end{equation}
Now we have

\noindent{\bf Claim 8.3}. For $L^1$ a.e. $t\in G_\Lambda^T$, set $\tau_k(t)=(\Delta d_k+|\nabla d_k|^2 d_k)(t)$.
Then there exists $\tau(t)\in L^2(\Omega,\mathbb R^3)$ such that, after passing to subsequences,
$\tau_k(t)\rightharpoonup \tau(t)$ in $L^2(\Omega)$, and
\begin{equation}\label{slice_strong}
d_k(t)\rightarrow d(t) \ {\rm{in}}\ H^1_{\rm{loc}}(\Omega).
\end{equation}
In particular, $d(t)\in H^1(\Omega, \mathbb S^2_{-1+a})$ is a suitable approximated harmonic map,
with tension field $\tau(t)$.

\smallskip
Since $t\in G_\Lambda^T$, we have $\displaystyle\liminf_{k\rightarrow\infty}\big\|\tau_k(t)\big\|_{L^2(\Omega)}\le\Lambda$
and hence there exists $\tau(t)\in L^2(\Omega,\mathbb R^3)$ such that, after taking a subsequence, $\tau_k(t)\rightharpoonup \tau(t)$ in $L^2(\Omega)$. To show (\ref{slice_strong}),  recall that by (\ref{energy_bound}) we can assume, after passing to subsequences,
$d_k(t)\rightharpoonup d(t)$ in $H^1(\Omega)$ and there exists a nonnegative
Radon measure $\nu_t$ in $\Omega$ such that
$$\frac12|\nabla d_k|^2(t)\,dx\rightharpoonup \frac12|\nabla d(t)|^2\,dx+\nu_t$$
as convergence of Radon measures in $\Omega$.  It is easy to check from the definition
that for $L^1$ a.e. $t\in G_\Lambda^T$, $\{d_k(t)\}$ is a family of suitable approximated harmonic map
such that  $\{d_k(t)\}\subset {\bf Y}(C_0, \Lambda, a; \Omega)$ with $0<a\le 2$. Hence Theorem
\ref{precomp2} implies that $\nu_t\equiv 0$ and $d_k(t)\rightarrow d(t)$
strongly in $H^1_{\rm{loc}}(\Omega)$. Hence $d(t)\in H^1(\Omega,\mathbb S^2_{-1+a})$ is a suitable approximated harmonic map.
This proves the Claim 8.3.

\smallskip
\noindent{\bf Claim 8.4}. For any subdomain $\widetilde\Omega\subset\subset\Omega$, it holds
that
\begin{equation}\label{good-conv1}
\lim_{k\rightarrow \infty}\int_{\widetilde\Omega}|\nabla (d_k-d)|^2=0.
\end{equation}
Similar to the proof of Claim 8.2, (\ref{good-conv1}) can be proven by contradiction.
For, otherwise, there exist $\widetilde\Omega\subset\subset\Omega$, $\delta_0>0$ and $k_l\rightarrow\infty$ such that
$$\int_{\Omega\times G_\Lambda^T}|d_{k_l}-d|^2\rightarrow 0,
\ {\rm{and}}\ \int_{\widetilde\Omega\times G_\Lambda^T}|\nabla (d_{k_l}-d)|^2\ge \delta_0.
$$
By Fubini's theorem, there exists $\{t_l\}\subset G_\Lambda^T$ such that
\begin{equation}\label{slice-conv1}
\int_{\Omega}|d_{k_l}(t_l)-d(t_l)|^2\rightarrow 0,
\ {\rm{and}}\ \int_{\widetilde\Omega}|\nabla (d_{k_l}(t_l)-d(t_l))|^2\ge \frac{2\delta_0}{T}.
\end{equation}
Since $\{d_{k_l}(t_l)\}, \{d(t_l)\}\subset {\bf Y}(C_0, \Lambda, a;\Omega)$ for $0<a\le 2$,
it follows from Theorem 7.1 that there exist $d_1, d_2\in {\bf Y}(C_0, \Lambda, a;\Omega)$
such that
$$d_{k_l}(t_l)\rightarrow d_1, \ \ d(t_l)\rightarrow d_2 \ \ {\rm{in}}\ \ L^2(\Omega)\cap H^1_{\rm{loc}}(\Omega).
$$
Hence, by (\ref{slice-conv1}), we obtain
\[
\int_{\Omega}|d_1-d_2|^2=0,
\ {\rm{and}}\ \int_{\widetilde\Omega}|\nabla (d_1-d_2|^2\ge \frac{2\delta_0}{T}.
\]
This is impossible. Thus we obtain
\begin{equation}\label{xt-strong7}
\Big\|d_k-d\Big\|_{L^2_tH^1_x\big(\widetilde\Omega\times G_\Lambda^T\big)}\rightarrow 0.
\end{equation}
On the other hand, it follows from (\ref{energy_bound}) and (\ref{bad_set}) that
\begin{equation}\label{xt-strong8}
\Big\|d_\epsilon-d\Big\|_{L^2H^1_x\big(\widetilde\Omega\times B_\Lambda^T\big)}^2
\le \Big(\sup_{t>0}\int_\Omega |\nabla d_k|^2(t)\Big)\Big|B_\Lambda^T\Big|\le C\Lambda^{-1}.
\end{equation}
Hence we have
\begin{equation}\label{xt-strong9}
\lim_{k\rightarrow \infty}\big\|d_k-d\big\|_{L^2H^1_x(\widetilde\Omega\times [0,T])}^2
\le C\Lambda^{-1}.
\end{equation}
Since $\Lambda>1$ can be chosen arbitrarily large, this implies that $d_k\rightarrow d$ strongly
in $H^1_{\rm{loc}}(\Omega\times [0,T])$. Since $(u_k, d_k)$ solves the equation (\ref{LLF}), it is standard
that by utilizing (\ref{xt-strong9}) and (\ref{weak_strong1}) we can show that $(u,d)$ is a weak solution of the equation
(\ref{LLF}). \hfill\qed

\bigskip
\noindent{\bf Acknowledgements}.  The first author is partially supported by NSF grants DMS1065964 and DMS1159313. The
second author is partially supported by NSF grants DMS1001115 and DMS 1265574 and NSFC grant 11128102.

\bigskip

\end{document}